\newcommand{\YZ}[1]{{#1}}
\newcommand{\GG}[1]{{#1}}
\newcommand{\MA}[1]{{#1}}
\newcommand{\AY}[1]{{#1}}
\newcommand{\GGn}[1]{{#1}}
\newcommand{\YZn}[1]{{#1}}
\newcommand{\MAn}[1]{{#1}}
\newcommand{\FIX}[1]{{#1}}
\newcommand{\MAYZR}[1]{{#1}}
\newcommand{\GGR}[1]{{#1}}
\tikzstyle{vertexB}=[circle,draw, minimum size=18pt, scale=0.55, inner sep=0.0pt]
\tikzstyle{vertexS}=[circle,draw, minimum size=18pt, scale=0.52, inner sep=0.0pt]
\tikzstyle{arc}=[->,line width=0.03cm]
\theoremstyle{definition}
\newcommand{\2}{\vspace{0.2cm}}
\newtheorem{thm}{Theorem}
\newtheorem{lemma}{Lemma}
\newtheorem{cor}{Corollary}
\newtheorem{remarks}{Remark}
\newtheorem{conj}{Conjecture}
\newtheorem{problem}{Problem}
\newcommand{\fasd}{{\rm{fasd}}}
\newcommand{\fas}{{\rm{fas}}}
\newcommand{\bas}{{\rm bas}}
\newcommand{\D}{\mathcal{D}}
\author{ Gregory Gutin\thanks{Department of Computer Science. Royal
    Holloway University of London, UK, and School of Mathematical Sciences
and LPMC, Nankai University, Tianjin 300071, China.  {\tt
g.gutin@rhul.ac.uk}}    \hspace{2mm} Mads Anker Nielsen\thanks{Department of Mathematics and
Computer Science, University of Cologne, Germany. {\tt m.nielsen@uni-koeln.de}
} \hspace{2mm}  Anders Yeo\thanks
{Department of Mathematics and Computer Science, University of Southern
Denmark, Denmark, and Department of Mathematics and Applied Mathematics,
University of Johannesburg, South Africa. {\tt yeo@imada.sdu.dk}}
\hspace{2mm}  Yacong Zhou\thanks{Shenzhen Institutes of Advanced
Technology, Chinese Academy of Sciences. {\tt yacong.zhou96@gmail.com}} }
\date{\today}
\title{Feedback Arc Sets and  Feedback Arc Set Decompositions in Weighted and Unweighted Oriented
	Graphs}
\begin{document}

\date{}

	\maketitle
	\begin{abstract}
\GGR{Let $D=(V(D),A(D))$ be a digraph with at least one directed cycle. 
A set $F$ of arcs is a feedback arc set (FAS) if $D-F$ has no directed cycle. 
The FAS decomposition number ${\rm fasd}(D)$  of $D$ is the maximum number of pairwise disjoint FASs whose union is $A(D)$. 
The directed girth $g(D)$ of $D$ is the minimum length of a directed cycle of $D$. Note that ${\rm fasd}(D)\le g(D).$
The FAS decomposition number appears in the well-known and far-from-solved conjecture of Woodall (1978) stating that for every planar digraph $D$ with at least one directed cycle,
${\rm fasd}(D)=g(D).$ The degree of a vertex of $D$ is the sum of its in-degree and out-degree. 

Let $D$ be an arc-weighted digraph and let ${\rm fas}_w(D)$ denote the minimum weight of its FAS. 
In this paper, we study bounds on ${\rm fasd}(D)$, ${\rm fas}_w(D)$ and ${\rm fas}(D)$ for 
 arc-weighted oriented graphs $D$ (i.e., digraphs without opposite arcs) with upper-bounded maximum degree $\Delta(D)$
and lower-bounded $g(D)$. Note that these parameters are related: ${\rm fas}_w(D)\le w(D)/{\rm fasd}(D)$, where $w(D)$ is the total weight of $D$, and
${\rm fas}(D)\le |A(D)|/{\rm fasd}(D).$ In particular, we prove the following: (i) If $\Delta(D)\leq~4$ and $g(D)\geq 3$, then ${\rm fasd}(D) \geq 3$ and
therefore ${\rm fas}_w(D)\leq \frac{w(D)}{3}$ which generalizes a known tight bound for an
unweighted oriented graph with maximum degree at most 4; (ii) If $\Delta(D)\leq 3$ and  $g(D)\in \{3,4,5\}$, then ${\rm fasd}(D)=g(D)$; 
(iii)  If $\Delta(D)\leq 3$ and  $g(D)\ge 8$ then ${\rm fasd}(D)<g(D).$
We also give some bounds for the cases when $\Delta$ or $g$ are large and state several open problems and a conjecture.
}
%		For any arc-weighted oriented graph $D=(V(D), A(D),w)$, we write
%    ${\rm fas}_w(D)$ to denote the minimum weight of a feedback arc set in $D$. In
%this paper, we consider upper bounds on ${\rm fas}_w(D)$ for arc-weight oriented
%graphs $D$ with bounded maximum degrees and \MAYZR{large} directed girth. We obtain such bounds by introducing
%a new parameter ${\rm fasd}(D)$, which is the maximum integer such that
%$A(D)$ can be partitioned into ${\rm fasd}(D)$ feedback arc sets. This new parameter seems to be interesting in its own right. 

%We obtain several bounds for both ${\rm fas}_w(D)$ and ${\rm fasd}(D)$ when $D$ has maximum degree
%$\Delta(D)\le \Delta$ and directed girth $g(D)\geq g$. In particular, we
%show that if $\Delta(D)\leq~4$ and $g(D)\geq 3$, then ${\rm fasd}(D) \geq 3$ and
%therefore ${\rm fas}_w(D)\leq \frac{w(D)}{3}$ which generalizes a tight bound for an
%unweighted oriented graph with maximum degree at most 4. We also show that
%${\rm fasd}(D)\geq g$ and ${\rm fas}_w(D) \leq \frac{w(D)}{g}$ if $\Delta(D)\leq 3$ and $g(D)\geq g$ for $g\in \{3,4,5\}$ and these bounds are tight.
%However,  for $g=10$ the bound ${\rm fasd}(D)\geq g$ does not always hold when $\Delta(D)\leq 3$. 
%Finally we give some bounds for the cases when $\Delta$ or $g$ are large.
\end{abstract}

	\section{Introduction}
	   For terminology and notation on digraphs not introduced in this paper, see \cite{BJG}.
\GGR{A digraph $D=(V(D),A(D))$ is {\em weighted} if $D$ is given together with a function $w:\ A(D) \to \mathbb{R}_{\ge 0}.$ 
}	
	A set $F\subset A(D)$ of a digraph $D$ is a {\em feedback arc set \GGR{(FAS)}} if
    $D-F$ is acyclic. In the {\em unweighted (weighted, respectively)
    feedback arc set problem}, given an unweighted (weighted, respectively)
    digraph $D$, we are to find an FAS $F$ of minimum size
    (weight, respectively), denoted by $\fas(D)$ {($\fas_w(D)$, respectively)}. The problem is {\sf
    NP}-hard even on unweighted tournaments \cite{Alon2006,CharbitTY2007}
    and \MAYZR{both unweighted and weighted
    	versions of it have numerous applications}, see e.g. \cite{Alon2002,ELS1993,FLRS,LS1991}. As the problem
    is of great theoretical and practical interest, its various aspects
    have been studied including approximation, exact, heuristic, and
    parameterized algorithms, computational complexity, and upper and lower bounds. 
    
    Lower and upper bounds are \GGR{usually} studied for {\em oriented graphs} (or, {\em
    orgraphs}) i.e. digraphs without directed 2-cycles because \GGR{we can reduce the FAS problem on digraphs to that on orgraphs as follows: 
    If a digraph $D$ is unweighted, we can delete all arcs in directed 2-cycles reducing the size of minimum FAS by the number of directed 2-cycles.
    Indeed, exactly one of the two arcs in a directed 2-cycle \MAYZR{is in any
    minimal} FAS. If $D$ is weighted, for every directed 2-cycle $C$, we reduce the weights of 
    both arcs of $C$ by subtracting from each weight the minimum weight of the two. Then the arcs with zero weight can be deleted.  
} 

%    Therefore, deleting the arcs of any $2$-cycle will decrease the size of a feedback arc set by exactly one, 
%so solving the problem for the oriented graph obtained by deleting the arcs of all $2$-cycles also solves 
%it for the original digraph (an analogous reduction can also be used for the weighted case).   %  which leads to a simple tight  bound $\fas(D)\le a(D)/2$ for a digraph $D$, where $a(D)$ is the number of arcs in $D$. 

    While almost all research on upper and lower bounds \GGn{for the problem} has been on
    unweighted \MAYZR{orgraphs},  in this paper, we study upper and lower bounds
    for weighted orgraphs. We take into consideration not only arc weights,
    but also the maximum degree (the {\em degree} of a vertex is the sum of its
    in- and out-degrees) and directed girth (the minimum  number of arcs in
    a directed cycle or $\infty$, if there are no directed cycles) of
    \MAYZR{orgraphs}.  The maximum degree already appeared in the the following
    well-known upper  bounds of Berger and Shor \cite{BS1990,BS1997}:
    ${\fas}(D)\le (\frac{1}{2}-\Omega(\frac{1}{\sqrt{\Delta}}))a(D),$ where
    $\Delta$ \AY{is} the maximum degree of an oriented graph $D$ and $a(D)$ the
    number of arcs in $D$, and of Alon \cite{Alon2002}: $\YZn{\fas_w(D)}\le
    (\frac{1}{2}-\frac{1}{16\sqrt{2\Delta}})w(D),$ where $w(D)$ is the sum
    of weights of a weighted orgraph $D$, and $\Delta$ is the
    maximum degree of $D$. Note that by Jung \cite{Jung1970} and Spencer
    \cite{Spencer1971}, the bounds of Berger and Shor, and Alon are tight
    subject to a coefficient $b$ in $b/\sqrt{\Delta}$. Alon and Seymour,
    see \cite[Section 3]{Seymour95} observed that there are 3-regular
    orgraphs with $n$ vertices, $m$ arcs and directed girth at least
    $\frac{4}{5}\ln n,$ where every feedback arc set is of size at least
    $m/24$.  
	
%	To avoid trivial cases, hereafter let $\Delta\ge 2$ and $\infty > g\ge 3$. 
    Let $\D_{\Delta,g}$ be the set of weighted  orgraphs of maximum
    degree at most $\Delta$ and directed girth at least $g.$ Let
    $\fas_w(\Delta,g)$ denote the supremum of the set $\{\YZn{\fas_w(D)}/w(D): \
    D\in \D_{\Delta,g}\}$. The same parameter restricted to unweighted
    orgraphs will be denoted by $\fas(\Delta,g)$. By \cite{Alon2002} and
    \cite{Seymour95}, we have $\fas(\Delta,3)\le \fas_w(\Delta,3)\le
    \frac{1}{2}-\frac{1}{16\sqrt{2\Delta}}$ and $\fas(6,\lceil 4\ln
    n/5\rceil)\ge \frac{1}{24}.$ Upper bounds and exact values of
    $\fas(\Delta,3)$ for small values of $\Delta$ have been studied in
    \cite{BS1990,BS1997,EL1995,GLYZ,Hanauer2017,HBA2013}. In particular,
    Berger and Shor \cite{BS1997} proved that \GGn{$\fas(3,3) = \frac{1}{3}$},
    $\fas(4,3)\le \frac{11}{30}$ and $\fas(5,3)\le \frac{11}{30}$. %Eades and Lin \cite{EL1995} showed that $\fas(3,3)\le \frac{1}{4}$. 
    Hanauer, Brandenburg and Auer  \cite{HBA2013}  proved that
   % $\fas(3,3)=\frac{2}{9}$ and
    $\fas(4,3)=\frac{1}{3}$, and Gutin, Lei, Yeo, and Zhou \cite{GLYZ} showed that $\fas(5,3)=\frac{1}{3}.$ 
	
    \GGR{Usually, upper bounds for
    $\fas(\Delta,g)$ cannot be easily extended to $\fas_w(\Delta,g)$, if such an extension is possible. 
    However,  $\fas(\Delta,g)\le a(D)/\fasd(D)$
    can be immediately extended to $\fas(\Delta,g)\le w(D)/\fasd(D),$ where $\fasd(D)$ is the maximum natural number $t$ such that $A(D)$ can
    be partitioned into $t$ FASs of $D$. Note that  $\fasd(D)$ equals the maximum number of pairwise-arc-disjoint FASs 
  of $D$, and $\fasd(D)\le g(D)$ as for a decomposition $\cal F$ of $A(D)$ into $\fasd(D)$ feedback arc sets and a directed cycle $C$ of length $g(D)$, every FAS in $\cal F$ must include an arc from $C$.
  
  The parameter $\fasd(D)$  was already used by Woodall \cite{Woo1978} who
  conjectured that for every non-acyclic planar digraph $D$,
  $\fasd(D)=g(D).$ This conjecture remains open and is confirmed only for
  special classes of digraphs (which include non-planar digraphs as their
  members): Lee and Wakabayashi \cite{LW2001} proved that the conjecture
  holds for digraphs whose underlying graph is of treewidth at most 2; Lee
  and Williams \cite{LW2006} confirmed the conjecture for planar digraphs
  with no $K_5 - e$ minor;  Guti{\'e}rrez \cite{Gutier} verified the
  conjecture for arc-maximal digraphs whose underlying graph is of
  treewidth 3 and for digraphs of treewidth at most 3 and directed girth 3.
  Our results imply new classes of orgraphs where $\fasd(D)=g(D)$ always
  holds and other classes where it is not the case. 
  }
   % Thus, we introduce the following new approach. For a non-acyclic orgraph $D$,
   % let $\fasd(D)$ be the maximum natural number $t$ such that $A(D)$ can
    %be partitioned into $t$ feedback arc sets of $D$. 
    
    Let $\sigma =
    v_1v_2\dots v_n$ be an ordering of $V(D)$. For $v_iv_j \in A(D)$ we say
    that $v_iv_j$ is a \textit{backward arc} with respect to $\sigma$ if $j
    < i$ and otherwise it is a \textit{forward arc}. Note that for every
    ordering of $V(D)$ the backward arcs form a feedback arc set and so
    \MA{do the} forward arcs. Hence, $\fasd(D)\ge 2$ for every non-acyclic orgraph $D$. % with $g(D)<\infty$. 
    For an acyclic orgraph $D$, we set $\fasd(D)=\infty$.
	
%	Clearly, $\fasd(D)\le \lfloor a(D)/\fas(D)\rfloor$. 
	Let $\fasd(\Delta,g)$ be the minimum of $\fasd(D)$ over all orgraphs $D$ with maximum degree at most $\Delta$ and directed girth at least $g.$ Then, \MAYZR{clearly $2\le\fasd(\Delta,g)\le g.$} %as a directed cycle with length $g$ can be decomposed into at most $g$ feedback arc sets. 
	 \MAYZR{In addition,}
	\begin{equation}\label{main-bound}
	\fasd(\Delta,g)\le \lfloor 1/\fas(\Delta,g) \rfloor,
	\end{equation} 
	and for every arc-weighted $D\in \D_{\Delta,g}$, 
	\begin{equation}\label{main-bound1}
	\YZn{\fas_w(D)}\le w(D)/\fasd(\Delta,g).
	\end{equation} 
%	Bound  (\ref{main-bound1}) motivates our study of $\fasd(\Delta,g).$ 

%	The next section contains additional terminology and notation. 

In Section \ref{sec: fas_w(4,3)}, we prove that $\fas_w(4,3)=\frac{1}{3}$,
which generalizes the result $\fas(4,3)=\frac{1}{3}$ by Hanauer,
Brandenburg, and Auer  \cite{HBA2013}. In fact, we prove a stronger result:
$\fasd(4,3)=3.$ In Section \ref{sec: fasd(3,4)=4}, we prove that
$\fasd(3,g)=g$ for $g\in \{3,4,5\}$ implying that $\fas_w(3,g)\le
\frac{1}{g}$ for $g\in \{3,4,5\}$. \MAYZR{While we are not able to extend
this proof to $g=6$, we do show that $\fas(3,6) = \frac{1}{6}$. This
provides some support for the possibilty that $\fas_w(3,6) =
\frac{1}{6}$ and $\fasd(3,6) = 6$.}

\AY{In Section~\ref{sec:lub} we show that $\fasd(3,g)<g$ when \GGR{$g\ge 8$}, so it would be interesting to determine for which $g$ we have $\fasd(3,g)=g$ and $\fasd(3,g+1)<g+1$. Clearly 
$g \in \{5,6,7\}$ in this case.} 
	Note that for $\Delta=2$ we have $\fas(2,g)=\frac{1}{g}$ and
    $\fasd(2,g)=g$ and therefore $\fas(2,g)\to 0$ and $\fasd(2,g)\to
    \infty$ as $g\to \infty$. Intuitively, this trend should still be true
    if we fix a larger $\Delta$. However, we show that this is not the
    case. In particular, in Section \ref{sec:lub}, we prove the following.
    For any integer $g\geq 3$ and prime power $p \MAYZR{\ \equiv 1 \pmod 4}$, there exists a $\frac{p+1}{2}$-regular orgraph $D$ with directed girth at least $g$ such that $\fas(D)\geq \frac{p+1-2\sqrt{p}}{4(p+1)}a(D)$ and therefore $\fasd(D)\leq \frac{4(p+1)}{p+1-2\sqrt{p}}$. Thus, $\fasd(p+1,g)\leq \frac{4(p+1)}{p+1-2\sqrt{p}}$ for every $g\ge 3.$ Using this result and a vertex splitting operation, we show that 
if $\Delta\geq 3$, then for every $g\geq 3$ we have $\fas(\Delta,
g)>\frac{1}{\FIX{95}}$ and $\fasd(\Delta,g)\le \FIX{94}$. \YZ{We also show that there is an integer $0<c\leq \MAYZR{1362}$ such that if $\Delta\geq c$ then $\fas(\Delta,g)>1/3$ and therefore by (\ref{main-bound}), $\fasd(\Delta,g)=2$. In other words, letting $g$ be arbitrarily large does not help in getting a better lower bound than just the trivial $\fasd(\Delta,g)\geq 2$, if we do not bound the $\Delta$ by a relatively small constant and especially by \MAYZR{1362}.} In Section  \ref{sec:lub}, we also prove that $\fasd(5,4) \le 3$ and $\fasd(4,6)\le 5$.	

\YZn{In Section \ref{sec:conclusion}, we  conclude our paper by stating some open problems, including the above one. }

In \MAYZR{Table}~\ref{table:Results} we summarize the results for
$\fasd(\Delta,g)$ obtained in this paper \YZn{where bounds on some entries
are obtained from the fact that \[\fasd(\Delta+1,g)\leq \fasd(\Delta,g)\leq
\fasd(\Delta,g+1),\] which follows directly from the definition of
$\fasd(\cdot,\cdot)$. For example, for every $g\geq 5$, we have that
$\fasd(\Delta,g)\leq \fasd(102,g)\leq 4$ when $\Delta> 102$. And for
$\Delta=3$, we have that $\fasd(3, g)\geq \fasd(3,5)=5$ when $g>5$.}
\MAYZR{Note
that for entries containing $< g$ or an open interval excluding $g$, the
exact upper bound comes from Theorem \ref{lemB} and is stronger than just $<g$
for larger $g$.}
\FIX{Note that one can obtain a better upper bound than 15 for $\fasd(\Delta,g)$
for those $\Delta \in [7,101]$ which are one more than an odd prime power
$p \equiv 1 \pmod 4$
using Theorem \ref{thm:girthandlargefas} in Section \ref{sec:lub}. For
example, one can show that $\fasd(\Delta,g) \leq 8$ when $\Delta \geq 14$
and $\fasd(\Delta, g) \leq 5$ when $\Delta \geq 38$.}

\begin{table}[h]
\begin{center}
	\scalebox{.9}{\begin{tabular}{|c||c|c|c|c|c|c|c|c|c|c|c|} \hline
\MAYZR{\diagbox[width=2.5cm,height=1cm]{$~~~~~g$}{$\Delta~~$}}       % & \multicolumn{11}{|c|}{ Values of $\Delta$ } \\$\fasd(\Delta,g)$    
&    $2$    &    3      &    4      &    5     &     6      & $\cdots$ &  102   & $\cdots$&  \MAYZR{422}  & $\cdots$ & $\geq$ \MAYZR{1362}     \\ \hline \hline
$g=3$                &    $3$    &    $3$    &    $3$    &          &           \YZn{$2$}\footnotemark&      2    &     2     &    2    &  2&  2&         2      \\ \hline
$g=4$                &    $4$    &    $4$    &  $\in \{3,4\}$       & $\leq 3$ &     $\leq 3$       &     $\leq 3$     &     $\leq 3$     &  $\leq 3$    & $\leq 3$   & $\leq 3$ &  2             \\ \hline
$g=5$                &    $5$    &    $5$    &     $\in[3,5]$      &          &            &          & $\leq 4$ &   $\leq 4$   &  $\leq 3$     & $\leq 3$ &  2             \\ \hline
$g=6$              &    $6$    &       $\in \{5,6\}$      & $\in \MAYZR{[3,5]}$  &    $\leq 5$       &      $\leq 5$       &      $\leq 5$     & $\leq 4$ &  $\leq 4$    &  $\leq 3$      & $\leq 3$   & 2             \\ \hline
$g=7$             &    $\MAYZR{7}$    &     $\in [5,7]$     &     $\in [3,\MAYZR{7}]$      &          &            &          & $\leq 4$ &  $\leq 4$     & $\leq 3$    & $\leq 3$  & 2            \\ \hline
%\MAYZR{$g=8$  }             &    $\MAYZR{8}$    & \MAYZR{$\in [5,7]$}  & $\MAYZR{\in [3,7]}$      &    $\leq 7$      &     $\leq 7$       &    $\leq 7$      & $\leq 4$ &   $\leq 4$    & $\leq 3$    &  $\leq 3$ & 2        \\ \hline
\MAYZR{$8 \leq g \leq 15$}  &    $g$    &     $\in [5,\MAYZR{g)}$      &    $\in
[3,\MAYZR{g)}$        &    \MAYZR{$<g$}      &     \MAYZR{$< g$} &       \MAYZR{$ < g$}   & $\leq 4$ &    $\leq 4$   & $\leq 3$    &  $\leq 3$  &2             \\ \hline
\MAYZR{$16 \leq g \leq 94$}  &    $g$    &    $\in [5,\MAYZR{g)}$        & $\in [3,\MAYZR{g)}$        &     \MAYZR{$ < g$}    & $\leq 15$  &    $\leq 15$      & $\leq 4$ & $\leq 4$   & $\leq 3$    &  $\leq 3$ & 2             \\ \hline
%$30 \leq g \leq 89$  &    $g$    &     $\in [5,g]$       & $\leq 29$ &   $\leq 29$       & $\leq 15$  &   $\leq 15$       & $\leq 4$ &  $\leq 4$  &  $\leq 3$      & $\leq 3$ &  2             \\ \hline
$g \geq \FIX{95}$          &    $g$    & $\MAYZR{\in [5,94]}$ & $\MAYZR{\in [3,94]}$ & $\FIX{\leq 94}$      & $\leq 15$  &     $\leq 15$     & $\leq 4$ &  $\leq 4$  &  $\leq 3$       & $\leq 3$ & 2             \\ \hline
\end{tabular}}
\end{center}

\caption{Our main results for the value of $\fasd(\Delta,g)$. Where no
entry is listed, we only know that $2 \leq \fasd(\Delta,g) \leq g$. \GGR{Using Theorem \ref{lemB} the upper bound for $\fasd(3,g)$ can be  improved for every even $12\le g\le 122$ and every odd $15\le g\le 121$.}}
\label{table:Results}
\end{table}
\footnotetext[1]{\YZn{This value is implied by a result in \cite[Proposition 11]{GLYZ}}.}

\vspace{2mm}

We conclude this section with some {\bf additional terminology and notation}.
	%\noindent{\bf Additional Terminology and Notation}
Let $D$ be a digraph and let $v \in V(D)$. The out-degree
(in-degree, respectively) is denoted $d^+_D(v)$ ($d^-_D(v)$, respectively). Recall that the {\em degree} of $v$ is
$d_D(v) = d^+_D(v) + d^-_D(v)$. The maximum degree $\Delta(D)$ of $D$ is
defined as $\Delta(D) = \max_{v \in V(D)} d_D(v)$. %We define $\D_k$ to bethe set of all orgraphs with $\Delta(D) \leq k$.  
A digraph $D$ is {\em
$k$-regular} if $d^+_D(v) =d^-_D(v)=k$ for every vertex $v\in V(D)$. For a
positive integer $k$, \MA{we define} $[k]=\{1,2,\dots ,k\}.$

The {\em order} of a directed or undirected graph $H$ is the number of
vertices in $H.$	In a digraph, a {\em cycle} ({\em path}, respectively)
is a directed cycle (directed path, respectively). 	A {\em $k$-cycle} is a
cycle with $k$ vertices. The {\em underlying graph} of a digraph $D$, is
the undirected graph $U(D)$  with the same vertex set as $D$ and such that
a pair $u,v$ of distinct vertices are adjacent in $U(D)$ if there is an arc
between $u$ and $v$ in $D.$ A component of $U(D)$ is a {\em component} of
$D$ and $D$ is {\em connected} if $U(D)$ is connected.
	
\MAYZR{%For convenience, we introduce the following definition.  
For a positive integer $g$, a \emph{$g$-arc-coloring} of an orgraph $D$ is a map
\(
c : A(D) \to [g].
\)
Such a coloring is called \emph{good} if every directed cycle of $D$ contains all $g$ colors.}

	\section{Proving $\fasd(4,3)=3$}\label{sec: fas_w(4,3)}
	
    \MA{
    Let $D \in \D_{4,3}$. The proof relies on constructing a triple
    $(\sigma_1,\sigma_2,\sigma_3)$ of orderings of $V(D)$. We call such a
    triple \textit{good} if for every arc $a \in A(D)$, $a$ is a backward
    arc with respect to $\sigma_i$ for exactly one $i \in [3]$. If $F$ is
    the set of backward arcs with respect to an ordering $\sigma$, then
    $D-F$ is acylic and thus $F$ is a feedback arc set of $D$. Hence, a
    good triple $T = (\sigma_1,\sigma_2,\sigma_3)$ of $D$ induces a
    partition of $A(D)$ into three feedback arc sets $F_1$, $F_2$, and
    $F_3$ where $F_i$ is the set of backward arcs with respect to
    $\sigma_i$ for $i=1,2,3$. Our goal is therefore to construct a good
    triple of $D$.}

    If $(\sigma_1,\sigma_2,\sigma_3)$ is a good triple of $D$ and there
    exists a vertex $v \in V(D)$ such that $v$ is first in $\sigma_1$ and
    last in $\sigma_2$, then we say that $(\sigma_1,\sigma_2,\sigma_3)$ is
    a {\em $v$-triple}. As a notational convention, we write $\sigma^v$ to
    emphasize that $v$ is the first vertex of the ordering $\sigma^v$.
    Likewise, we write $\sigma_v$ to emphasize that $v$ is the last vertex
    of the ordering $\sigma_v$. 

    Let $D$ be an oriented graph  and let $T'$ be a good triple of $D-v$
    for some $v \in V(D)$. By \textit{inserting} $v$ into $T'$ we mean
    inserting $v$ into every ordering of \MA{$T'$} in such a
    way that we obtain a good triple $T$ of $D$. We say that $\sigma'$ is a
    {\em subordering} of $\sigma$ if $\sigma'$ can be obtained from
    $\sigma$ by deleting vertices. We write $\sigma' \leq \sigma$ if
    $\sigma'$ is a subordering of $\sigma$.

	The main theorem of this section is the following:
	
	\begin{restatable}{thm}{main}
		\label{thm:main}
		For any $H \in \D_{4,3}$, $A(H)$ can be partitioned into 3 feedback arc
		sets.
	\end{restatable}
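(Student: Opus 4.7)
The plan is to prove Theorem \ref{thm:main} by strong induction on $|V(H)|$, using the vertex-removal framework introduced before the theorem: delete some vertex $v$, apply the inductive hypothesis to $H - v \in \D_{4,3}$ to obtain a good triple $T' = (\sigma_1', \sigma_2', \sigma_3')$, and then insert $v$ into $T'$ to obtain a good triple of $H$. The base case is any acyclic orgraph, for which any linear ordering (and two copies of it) gives a trivial good triple.

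First I would analyze the insertion problem precisely. Placing $v$ in $\sigma_i$ corresponds to choosing a prefix $S_i \subseteq N(v)$ of those neighbors appearing before $v$ with respect to the restriction of $\sigma_i'$ to $N(v)$. The set of arcs incident to $v$ that become backward in $\sigma_i$ is then $\{uv : u \notin S_i\} \cup \{vu : u \in S_i\}$. The insertion succeeds if and only if, over $i \in [3]$, these three sets partition the arcs incident to $v$. For $d(v) \leq 2$, a suitable choice of the $S_i$ always exists regardless of $T'$; for $d(v) = 3$, a short case analysis on the orientations and on the relative orderings of the three neighbors in the $\sigma_i'$ still goes through. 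The main obstacle is the degree-four case: with four arcs to distribute across only three orderings, the prefix constraint on each $S_i$ can be incompatible with every admissible partition of the incident arcs.

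To overcome this, I would strengthen the inductive hypothesis to produce not merely a good triple of $H - v$ but a $u$-triple for a carefully chosen neighbor $u$ of $v$. Placing $u$ at the extremes of $\sigma_1'$ and $\sigma_2'$ essentially fixes the color of the arc between $v$ and $u$ as soon as $v$ is placed adjacent to $u$ in those two orderings, reducing the insertion problem for the remaining (at most three) neighbors to a constraint that can be satisfied by a careful positioning of $v$ in $\sigma_3'$. The proof will then branch on the orientation of the arc between $u$ and $v$ and on how the other neighbors of $v$ are distributed among the $\sigma_i'$.

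The hard part, and likely the bulk of the work, will be twofold: first, showing that every $H \in \D_{4,3}$ admits some vertex $v$ together with a neighbor $u$ such that $H - v$ has a $u$-triple satisfying all the constraints needed for insertion; and second, verifying the insertion in the remaining degree-four configurations. I expect that in a handful of bad configurations a direct insertion into $T'$ will fail, and one will need to modify $T'$ locally — for instance, by swapping two consecutive vertices in some $\sigma_i'$ when such a swap preserves goodness — before insertion succeeds. Handling these local modifications while maintaining the $u$-triple structure is where I anticipate the most delicate casework, particularly when the vertex to be inserted is incident to short cycles of length exactly three.
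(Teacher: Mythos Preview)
Your framework is the paper's framework, and the reduction to the $2$-regular case via unbalanced vertices (your degree $\le 3$ discussion) is essentially Lemma~\ref{lem:nonregular}. The gap is in the balanced degree-$4$ case. You claim that a $u$-triple of $H-v$ for a single neighbor $u$ ``reduces the insertion problem for the remaining (at most three) neighbors to a constraint that can be satisfied by a careful positioning of $v$ in $\sigma_3'$.'' This is not true. Work it out: if $N^-(v)=\{a_1,a_2\}$, $N^+(v)=\{b_1,b_2\}$ and you have an $a_1$-triple $(\sigma^{a_1},\sigma_{a_1},\sigma_3)$, then placing $v$ adjacent to $a_1$ in the first two orderings forces the backward arcs at $v$ there to be, say, $\{a_2v\}$ and $\{a_1v,vb_1,vb_2\}$. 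For the triple to be good you now need \emph{no} backward arc at $v$ in $\sigma_3$, i.e.\ both $a_1,a_2$ must precede both $b_1,b_2$ in $\sigma_3$. A $u$-triple gives you no control over $\sigma_3$ at all, and your proposed fix of swapping consecutive vertices is far too local to enforce such a global relative-order condition among four designated vertices.

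The paper confronts exactly this obstruction and resolves it with a genuinely new ingredient you are missing: an \emph{anti-directed path} lemma (Lemma~\ref{lem:antidirected}). One grows an anti-directed path $P$ from $a_2$, obtains a $u$-triple at the far end of $P$ (via Lemma~\ref{lem:nonregular}), and then pulls the triple back along $P$ vertex by vertex while preserving a subordering that keeps $a_1$ (and eventually $b_1,b_2$) in the right relative position. The casework is on where $P$ terminates (at $a_1$, at some $b_i$, or maximally), and in the $b_i$ case a second anti-directed path is needed. None of this is a local swap; it is a structured global construction, and without it the insertion of a balanced degree-$4$ vertex does not go through.
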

	
	Since $\fas(4,3)=\frac{1}{3}$, the following corollary holds.
	
	\begin{cor}
		We have $\fasd(4,3)=3$ and $\fas_w(D)\le w(D)/3$ for every arc-weighted $D\in \D_{4,3}$. 
	\end{cor}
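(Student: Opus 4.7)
The plan is to derive both assertions of the corollary directly from Theorem \ref{thm:main} together with the known identity $\fas(4,3) = \frac{1}{3}$ due to Hanauer, Brandenburg and Auer \cite{HBA2013}. Theorem \ref{thm:main} does all the combinatorial work; what remains is routine bookkeeping via the inequalities already recorded in the introduction.

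First I would establish the lower bound $\fasd(4,3) \ge 3$. Let $H \in \D_{4,3}$ be arbitrary. By Theorem \ref{thm:main}, $A(H)$ admits a partition into three FASs. By the definition of $\fasd(H)$ as the maximum number $t$ such that $A(H)$ decomposes into $t$ pairwise arc-disjoint FASs, this exhibits $\fasd(H) \ge 3$. Since $H$ was arbitrary, $\fasd(4,3) = \min_{H \in \D_{4,3}} \fasd(H) \ge 3$.

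For the matching upper bound $\fasd(4,3) \le 3$, I would invoke inequality (\ref{main-bound}): $\fasd(4,3) \le \lfloor 1/\fas(4,3) \rfloor$. Since $\fas(4,3) = \frac{1}{3}$ from \cite{HBA2013}, this gives $\fasd(4,3) \le 3$, and combined with the previous step yields the equality $\fasd(4,3) = 3$.

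Finally, for the weighted statement, fix an arc-weighted $D \in \D_{4,3}$ and apply Theorem \ref{thm:main} to obtain a partition $A(D) = F_1 \sqcup F_2 \sqcup F_3$ into three FASs. Since $w(D) = w(F_1) + w(F_2) + w(F_3)$, by averaging (pigeonhole) there exists an index $i \in [3]$ with $w(F_i) \le w(D)/3$. As $F_i$ is a FAS of $D$, we conclude $\fas_w(D) \le w(F_i) \le w(D)/3$. There is no genuine obstacle in this derivation; the only nontrivial ingredient is Theorem \ref{thm:main} itself, and the step from the unweighted decomposition to the weighted minimum-weight bound is just the general inequality $\fas_w(D) \le w(D)/\fasd(D)$ noted in the abstract.
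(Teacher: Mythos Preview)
Your proposal is correct and matches the paper's approach exactly: the paper simply remarks ``Since $\fas(4,3)=\frac{1}{3}$, the following corollary holds,'' leaving the reader to fill in precisely the bookkeeping you have written out (Theorem~\ref{thm:main} for the lower bound on $\fasd(4,3)$, the known value $\fas(4,3)=\tfrac13$ together with~(\ref{main-bound}) for the upper bound, and the averaging step recorded in~(\ref{main-bound1}) for the weighted inequality).
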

	
    We prove three lemmas in the first subsection of this section and the
    main theorem in the second subsection. \MAYZR{For use in the following
    two subsections, we define} an \textit{unbalanced} vertex $v \in V(D)$
    \MAYZR{as} a vertex such that $\min\{d^+_D(v),d^-_D(v)\} \leq 1$. Furthermore,
    the \textit{converse} of a digraph $D$ is the digraph $D'$ obtained by
    reversing all arcs of $D$.
	
	\subsection{\MAYZR{Preliminary Results}}
    \MAYZR{We start by making the following straightforward observations.}
	
	\begin{restatable}{observation}{obs:one}
		\label{obs:one}
		Let $D$ be a digraph, let $\MAYZR{x} \in V(D)$, and let $\sigma'$ be an ordering
		of $V(D)-x$. If we obtain $\sigma$ by inserting $x$ into $\sigma'$ such
		that all in-neighbors of $x$ lie before $x$ and all out-neighbors of
		$x$ lie after $x$, then there are no backward arcs incident with $x$
		with respect to $\sigma$. 
	\end{restatable}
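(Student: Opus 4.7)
The plan is to prove this directly from the definition of a backward arc. Recall that an arc $uv$ is backward with respect to an ordering $\sigma$ precisely when $v$ precedes $u$ in $\sigma$; equivalently, an arc is forward exactly when its tail precedes its head. Every arc of $D$ incident with $x$ is of one of two types: either an in-arc $yx$ with $y$ an in-neighbor of $x$, or an out-arc $xz$ with $z$ an out-neighbor of $x$. The strategy is simply to check both cases.

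First, consider an in-arc $yx$. By hypothesis, every in-neighbor of $x$ is placed before $x$ in $\sigma$, so $y$ precedes $x$, making the tail precede the head; thus $yx$ is a forward arc with respect to $\sigma$. Next, consider an out-arc $xz$. By hypothesis, every out-neighbor of $x$ is placed after $x$ in $\sigma$, so $x$ precedes $z$, which again means the tail precedes the head; thus $xz$ is also a forward arc. Since every arc incident with $x$ is of one of these two types, none of them is backward with respect to $\sigma$. There is no real obstacle here: the observation is a direct restatement of the definitions, needed to justify the upcoming vertex-insertion arguments used in building good triples.
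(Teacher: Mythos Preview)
Your proof is correct and is essentially the only natural approach: a direct unpacking of the definition of backward arc for the two types of arcs incident with $x$. The paper in fact states this observation without proof, treating it as self-evident, so your short verification is entirely appropriate.
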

	
	We use Observation \ref{obs:one} in combination with the following
	observation.
	
	\begin{restatable}{observation}{obs}
		\label{obs}
		Let $D$ be a digraph, let $x \in V(D)$ and let $T =
		(\sigma_1,\sigma_2,\sigma_3)$ be a good triple of $D-x$. If we obtain
		$\sigma$ by inserting $x$ into $\sigma_1$ such that \YZ{there are no backward arcs incident with $x$} with respect $\sigma$, then
		$(x\sigma_2,\sigma_3x,\sigma)$ and $(x\sigma_3,\sigma_2 x,\sigma)$ are
		good ($x$-)triples of $D$.
	\end{restatable}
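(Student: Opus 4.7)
The plan is to check directly from the definitions that each of the two proposed triples is good, and that $x$ occupies the correct extreme positions to qualify as an $x$-triple. By symmetry in the roles of $\sigma_2$ and $\sigma_3$, it suffices to verify the claim for $(x\sigma_2,\sigma_3 x,\sigma)$; the argument for $(x\sigma_3,\sigma_2 x,\sigma)$ is obtained by exchanging $\sigma_2$ with $\sigma_3$. To show goodness, I would verify that every arc of $D$ is a backward arc with respect to exactly one of the three orderings, splitting into two cases: arcs not incident to $x$ and arcs incident to $x$.

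For arcs $uv \in A(D-x)$, I would use the fact that inserting a single vertex into an ordering of $V(D-x)$ does not alter the relative order of the other vertices. Hence $uv$ is a backward arc with respect to $x\sigma_2$ (respectively $\sigma_3 x$, $\sigma$) if and only if it is a backward arc with respect to $\sigma_2$ (respectively $\sigma_3$, $\sigma_1$). Since $(\sigma_1,\sigma_2,\sigma_3)$ is a good triple of $D-x$, the arc $uv$ is backward in exactly one of $\sigma_1,\sigma_2,\sigma_3$, and therefore in exactly one of the three new orderings.

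For arcs incident to $x$, I would exploit that $x$ sits at an extreme in each of $x\sigma_2$ and $\sigma_3 x$. In $x\sigma_2$, every in-arc $yx$ of $x$ is backward and every out-arc $xy$ is forward; in $\sigma_3 x$ the situation is reversed, so every out-arc of $x$ is backward and every in-arc is forward. By the hypothesis on $\sigma$, no arc incident to $x$ is backward with respect to $\sigma$. Consequently, each in-arc of $x$ is backward in exactly one ordering (namely $x\sigma_2$), and each out-arc of $x$ is backward in exactly one ordering (namely $\sigma_3 x$). Combining the two cases yields that $(x\sigma_2,\sigma_3 x,\sigma)$ is a good triple; since $x$ is first in $x\sigma_2$ and last in $\sigma_3 x$, it is in fact an $x$-triple, matching the definition given just before the observation.

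There is no genuine obstacle here: the statement is essentially a bookkeeping verification that the contributions of the three orderings to the backward-arc sets partition $A(D)$. The only point requiring care is keeping track of which of the three orderings in the new triple plays the role of the ``first'' ordering (containing $x$ at its start) and which plays the role of the ``second'' (containing $x$ at its end), so that the $x$-triple condition is read off correctly; this is also what makes the two symmetric versions $(x\sigma_2,\sigma_3 x,\sigma)$ and $(x\sigma_3,\sigma_2 x,\sigma)$ both valid good $x$-triples.
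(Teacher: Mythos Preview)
Your proof is correct and follows essentially the same approach as the paper: handle one of the two triples by symmetry, observe that arcs of $D-x$ are unaffected since inserting $x$ preserves relative order, and then check that each arc incident to $x$ is backward in exactly one of $x\sigma_2$ and $\sigma_3 x$ and never in $\sigma$. If anything, you are slightly more explicit than the paper about the non-incident arcs, but the argument is the same.
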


    \MAYZR{
        \begin{proof}[Proof of Observation~\ref{obs}]
            We show that $(x\sigma_2,\sigma_3x,\sigma)$ is good. The
            proof for $(x\sigma_3,\sigma_2x, \sigma)$ is identical. We only
            need to show that any arc incident with $x$ is backward with
            respect to exactly one of $x\sigma_2$, $\sigma_3x$, and $\sigma$
            as the triple $T$ that we started with is good. Clearly, exactly
            the arcs entering $x$ are backward with respect to $x\sigma_2$
            ($x$ is first in this ordering) and exactly the arcs leaving
            $x$ are backward with respect to $\sigma_3x$. Furthermore,
            $\sigma$ was obtained by inserting $x$ into $\sigma_1$ such
            that no arcs incident with $x$ are backward.
        \end{proof}
    }
	
	We insert vertices into triples several times in our proofs. In all cases,
	the argument that the resulting triple is good is a combination of
	Observation \ref{obs:one} and \ref{obs}.

    \MAYZR{We now prove the following lemma, which} gives a proof of the main
    theorem in the case where $H$ contains no $2$-regular component.
	
	\begin{restatable}{lemma}{nonregular}
		\label{lem:nonregular}
		Let $D \in \D_{4,3}$ be such that no component of $D$ is $2$-regular. For
		all unbalanced $v \in V(D)$, there exists a good $v$-triple
		$(\sigma^v,\sigma_v,\sigma)$ of $D$.
	\end{restatable}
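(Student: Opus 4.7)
We plan to proceed by induction on $|V(D)|$, with trivial base case $|V(D)| \leq 1$. For the induction step, by taking the converse of $D$ if needed (which exchanges the roles of $\sigma^v$ and $\sigma_v$), we may assume $d^-_D(v) \leq 1$. Let $u$ denote the unique in-neighbor of $v$ if $d^-_D(v) = 1$; otherwise let $u$ be any unbalanced vertex of $D - v$ (which exists because, as shown below, $D-v$ has no $2$-regular component, so every non-empty component of $D-v$ contains an unbalanced vertex).

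The strategy is to apply the inductive hypothesis to $D-v$ with unbalanced vertex $u$ and then extend the resulting good $u$-triple via Observation~\ref{obs}. This requires checking that $D-v$ satisfies the lemma's hypotheses. Clearly $D - v \in \D_{4,3}$. The key structural point is that $D-v$ has no $2$-regular component: a $2$-regular component $C$ of $D-v$ either contains a neighbor of $v$ (which would then have degree at least $5$ in $D$, violating $\Delta(D) \leq 4$) or is disjoint from $N_D(v)$ (and hence is already a $2$-regular component of $D$, contradicting the hypothesis on $D$). A short degree argument using $d^+_D(u) + d^-_D(u) \leq 4$ together with, when $d^-_D(v) = 1$, the decrement of $d^+(u)$ caused by removing $v$, shows that $u$ is unbalanced in $D-v$.

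With a good $u$-triple $(\sigma^u, \sigma_u, \sigma)$ of $D-v$ in hand, we insert $v$ into $\sigma^u$ at the position immediately before the first vertex of $N^+_D(v)$ in $\sigma^u$ (or at the end of $\sigma^u$ if $N^+_D(v) = \emptyset$), obtaining an extended ordering $\tilde\sigma^u$. Since $D$ is oriented, we have $u \notin N^+_D(v)$ whenever $d^-_D(v) = 1$, so the insertion point lies strictly after $u$, ensuring that the in-neighbor $u$ precedes $v$; when $d^-_D(v) = 0$ there is no in-neighbor condition to verify. Either way, $\tilde\sigma^u$ has no backward arc incident to $v$ by Observation~\ref{obs:one}, and Observation~\ref{obs} then delivers the good $v$-triple $(v\sigma_u, \sigma v, \tilde\sigma^u)$ of $D$.

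The only nontrivial step is the degree-counting argument ruling out $2$-regular components in $D-v$; the remainder is a direct application of Observations~\ref{obs:one} and~\ref{obs}. This structural observation, which is the main obstacle we anticipate, falls out cleanly from the degree bound $\Delta(D) \leq 4$ and the hypothesis on $D$, so the proof is essentially a one-step induction.
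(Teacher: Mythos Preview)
Your proof is correct and follows essentially the same inductive approach as the paper's own proof: reduce to $d^-_D(v)\le 1$ by passing to the converse, verify $D-v$ has no $2$-regular component, apply the induction hypothesis at an unbalanced vertex (the in-neighbour $u$ when $d^-_D(v)=1$), and extend via Observations~\ref{obs:one} and~\ref{obs}. The only cosmetic differences are that the paper treats the cases $d^-_D(v)=0$ and $d^-_D(v)=1$ separately and, in the latter case, inserts $v$ immediately after $u$ in $\sigma^u$ rather than immediately before the first out-neighbour; both insertion points work for the same reason.
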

	
	\begin{proof}
        By induction on the order of $D$. If $V(D) = \{v\}$, then $(v,v,v)$
        is a good $v$-triple.
		
        Suppose $|V(D)| \geq 2$ and let $v \in V(D)$ be an unbalanced
        vertex. We may assume without loss of generality that $d^-(v) \leq
        1$ since we can otherwise consider the converse of $D$. Let $D' =
        D-v$. Observe that we may apply the induction hypothesis to $D'$
        since any component of $D'$ is either a component of $D$ or
        incident with $v$ in $D$ and thus contains a vertex of degree at most
        $3$. We consider two cases.
		
		\quad
		
		\noindent\textbf{Case 1}: $d^-(v) = 0$.
		
        We have $|V(D')| \geq 1$ and since $D'$ is not $2$-regular, $D'$
        contains at least one unbalanced vertex. By the induction
        hypothesis, there exists a good triple
        $(\sigma_1,\sigma_2,\sigma_3)$ of $D'$. We claim that
        $(v\sigma_1,\sigma_2v,v\sigma_3)$ is a good $v$-triple of $D$.
        Indeed, $v$ has no in-neighbors, so \YZ{there are no backward arcs
        incident with $v$} with respect to $v\sigma_1$ or $v\sigma_3$.
        Furthermore, all arcs incident with $v$ are \YZ{backward arcs} with
        respect to $\sigma_2v$. This completes the proof of Case 1.
		
		\quad
		
		\noindent\textbf{Case 2}: $d^-(v) = 1$.
		
        The vertex $u \in V(D')$ such that $N^-_D(v) = \{u\}$ is unbalanced
        since $d_{D'}(u) \leq 3$. By the induction hypothesis, there exists
        a good $u$-triple $(\sigma^u,\sigma_u,\sigma)$ of $D'$. Let
        $\sigma^{uv}$ be the ordering obtained from $\sigma^{u}$ by
        inserting $v$ immediately after $u$. \MA{In $\sigma^{uv}$, all
        in-neighbors of $v$ (just $u$) lie before $v$ and all other
        vertices lie after $v$. In particular, all out-neighbors of $v$ lie
        after $v$ ($u$ is not an out-neighbor of $v$ as $D$ is oriented).} Thus,
        \YZ{there are no backward arcs incident with $v$} with respect to
        $\sigma^{uv}$. Now, $(v\sigma_u,\sigma v,\sigma^{uv})$ is a good
        $v$-triple of $D$, where the arcs leaving $v$ are backward
        \YZ{arcs} with respect to $\sigma v$ and the arcs entering
        $v$ (i.e. $uv$) are backward \YZ{arcs} with respect to $v
        \sigma_u$. This completes the proof of Case 2.
		
		Either Case 1 or Case 2 applies, and thus we have proven the lemma.
	\end{proof}

    The {\em transitive triangle} is the orgraph
    $(\{a,b,c\},\{ab,ac,bc\})$. We now show the following lemma, which
    gives a proof of the main theorem in the case where $H$ is connected,
    $2$-regular, and contains a transitive triangle.
	
	\begin{restatable}{lemma}{transitive}
		\label{lem:transitive}
		Let $D \in \D_{4,3}$ be $2$-regular and connected. If $D$ contains a
		transitive triangle, then there exists a good triple of $D$.
	\end{restatable}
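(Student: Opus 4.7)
My plan is to remove the middle vertex $b$ of the transitive triangle, apply Lemma~\ref{lem:nonregular} to $D' := D - b$, and then insert $b$ back to obtain a good triple of $D$.

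Let $b_1$ be the in-neighbor of $b$ other than $a$, and $b_2$ the out-neighbor of $b$ other than $c$. Since $D$ is a $2$-regular oriented graph, the four vertices $a, b_1, c, b_2$ are pairwise distinct. I first verify the hypotheses of Lemma~\ref{lem:nonregular} for $D'$. Clearly $D' \in \D_{4,3}$. Since $D$ is connected and $N_D(b) = \{a, b_1, c, b_2\}$, every component of $D'$ must contain at least one of these four vertices (otherwise it would be disconnected from $b$ in $D$). Each of $a, b_1, c, b_2$ is unbalanced in $D'$: $a$ and $b_1$ lose the out-arc to $b$, while $c$ and $b_2$ lose the in-arc from $b$. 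So no component of $D'$ is $2$-regular.

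Applying Lemma~\ref{lem:nonregular} to $D'$ with $v = a$, I obtain a good $a$-triple $(\sigma^a, \sigma_a, \tau)$ of $D'$. To build a good triple $(\rho_1, \rho_2, \rho_3)$ of $D$, my intent is to take $\rho_1 := b\sigma_a$ (so the in-arcs $ab$ and $b_1b$ are the only backward arcs at $b$ in $\rho_1$), $\rho_2 := \sigma^a b$ (so the out-arcs $bc$ and $bb_2$ are the only backward arcs at $b$ in $\rho_2$), and $\rho_3$ obtained from $\tau$ by inserting $b$ at a gap with $\{a, b_1\}$ immediately before $b$ and $\{c, b_2\}$ immediately after $b$ (so that no arc incident with $b$ is backward in $\rho_3$). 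Combining Observations~\ref{obs:one} and \ref{obs} with the goodness of $(\sigma^a, \sigma_a, \tau)$ on $D'$, this yields a good triple of $D$ provided the insertion into $\tau$ is possible.

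The main obstacle is precisely this insertion: it requires $\max_\tau\{a, b_1\} < \min_\tau\{c, b_2\}$. The choice $v = a$ forces $a < c$ in $\tau$ (since the three arcs $a_1a, a_2a, ac$ incident with $a$ in $D'$ are fully consumed as backward arcs in $\sigma^a$ and $\sigma_a$, making all three forward in $\tau$), but the positions of $b_1$ and $b_2$ are a priori unconstrained. My plan to close the gap is a case analysis on the relative positions of $a, b_1, c, b_2$ in $\tau$: if $\tau$ satisfies the positional condition, we are done; otherwise I switch the anchor $v$ in Lemma~\ref{lem:nonregular} to another unbalanced vertex in $\{b_1, c, b_2\}$ (each choice yielding different positional constraints on the third ordering) and/or insert $b$ with some backward arcs in $\rho_1$ or $\rho_2$ and rebalance via $\rho_3$, exploiting the combinatorial freedom of distributing the four arcs at $b$ over three orderings. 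The structural fact that $ac$ is an arc of $D'$ forces $a < c$ in at least two of the three orderings of any good triple of $D'$, which further restricts the possibilities and should suffice to find a valid insertion.
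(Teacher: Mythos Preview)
Your plan has a genuine gap: the case analysis you promise at the end is not carried out, and it is not clear that it can be made to work. After removing only the middle vertex $b$ and applying Lemma~\ref{lem:nonregular} with anchor $a$, you correctly observe that the arc $ac\in A(D')$ forces $a<c$ in $\tau$; but nothing you have set up constrains the positions of $b_1$ and $b_2$ in $\tau$. Switching the anchor to $c$ yields the same constraint $a<c$ in the third ordering, and anchoring at $b_1$ or $b_2$ gives no direct control over the other three vertices unless there happen to be arcs among $\{a,b_1,c,b_2\}$ in $D'$, which you cannot assume. Your ``rebalancing'' fallback would require tracking, for each bad configuration of $(a,b_1,c,b_2)$ in $\tau$, how the four arcs at $b$ can be distributed over the three orderings consistently with the already-fixed backward/forward status of every other arc; this is far more delicate than Observations~\ref{obs:one} and~\ref{obs} handle.

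The paper avoids this difficulty by removing \emph{two} vertices rather than one: it deletes the source and the sink of the transitive triangle and applies Lemma~\ref{lem:nonregular} to the middle vertex (which then has degree~$2$ in the reduced digraph). The point is that after deletion of the sink, the source has the middle vertex as its \emph{only} out-neighbour, so it can be reinserted immediately before the middle vertex at the end of $\sigma_{a_2}$ with no backward arcs. This places source and middle as the first two vertices of one ordering, and since these are exactly the two in-neighbours of the sink, the sink can then be inserted right after them, again with no backward arcs. No positional information about any ``stranger'' vertex like your $b_1,b_2$ is ever needed.
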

	
	\begin{proof}
		Let $a_1,a_2,x \in V(D)$ be such that $\{a_1a_2,a_1x,a_2x\} \subseteq
		A(D)$. Let $D' = D-\{a_1,x\}$. Any component of $D'$ is incident with
		$\{a_1,x\}$ and thus not $2$-regular. Furthermore, $d_{D'}(a_2) = 2$
		and $a_2$ is thus unbalanced in $D'$. By Lemma \ref{lem:nonregular}, there
        exists a good $a_2$-triple $T' = (\sigma^{a_2},
        \sigma_{a_2},\sigma)$ of $D'$. Let $T = (a_1\sigma^{a_2},\sigma
        a_1, \sigma_{a_1a_2})$ where $\sigma_{a_1a_2}$ is the ordering
        obtained from $\sigma_{a_2}$ by inserting $a_1$ immediately before
        $a_2$. 

        We claim that $T$ is a good triple of $D-x$. In $\sigma_{a_1a_2}$,
        all out-neighbors of $a_1$ \MA{(just $a_2$)} are
        after $a_1$ and all in-neighbors are before $a_1$ (since only $a_2$
        is after $a_1$). Thus, \YZ{there are no backward arcs incident with
        $a_1$} with respect to $\sigma_{a_1a_2}$ (Observation
        \ref{obs:one}). Exactly the arcs leaving $a_1$ are backward arcs
        with respect to $\sigma a_1$ and exactly the arcs entering $a_1$
        are backward arcs with respect to $a_1\sigma^{a_2}$. Thus, $T$ is a
        good triple of $D-x$ (Observation \ref{obs}).
		
		In the ordering $a_1 \sigma^{a_2}$, $a_1$ and $a_2$ are the two first
		vertices. Obtain $\pi$ from $a_1\sigma^{a_2}$ by inserting $x$
		immediately after $a_1$ and $a_2$. In $\pi$, all in-neighbors of $x$
		($a_1$ and $a_2$) are before $x$ and all out-neighbors of $x$ are after
		$x$. Hence, there are no backward arcs incident with $x$ with respect to
        $\pi$. \MA{Now, $(x\sigma a_1,\sigma_{a_1a_2}x,\pi)$ is a good triple
        of $D$, which completes the proof.}
	\end{proof}

    \MA{The last lemma we need is the following technical lemma. An
        \textit{anti-directed} path in a digraph $D$ is a path $P =
        x_1x_2\dots x_\ell$ in the underlying graph $U(D)$ of $D$ such that
        $x_{i-1}x_{i} \in A(P)$ implies $x_{i+1}x_{i} \in A(P)$ and $x_ix_{i-1} \in A(P)$ implies $x_i x_{i+1} \in A(P)$
        for $i=2,3,\dots,\ell-1$ (i.e. the direction of the arcs on $P$ alternates).}
	
	\begin{restatable}{lemma}{antidirected}
		\label{lem:antidirected} 
        Let $D \subseteq H$ for some $2$-regular \MA{connected} $H \in \D_{4,3}$, let $P =
		x_1x_2\dots x_\ell$ be an anti-directed path in $D$ of with $V(P) \geq
		3$ such that $N_D^+(x_1) = \{x_2\}$ or $N_D^-(x_1) = \{x_2\}$, and let
		$T = (\pi^{x_\ell},\pi_{x_\ell},\pi)$ be any good
		$x_\ell$-triple of $D-(V(P) \setminus \{x_\ell \})$. For any $\pi^* \in
		\{\pi^{x_\ell},\pi_{x_\ell}\}$ there exists 
		\begin{enumerate}[label=(\arabic*)]
			\item a good $x_1$-triple $T_1 =
			(\sigma^{x_1},\sigma_{x_1},\sigma)$ of $D$ such that $\pi^*
			\leq \sigma^{x_1}$, and
			\item a good $x_1$-triple $T_2 =
			(\sigma^{x_1},\sigma_{x_1},\sigma)$ of $D$ such that $\pi^*
			\leq \sigma_{x_1}$.
		\end{enumerate}
	\end{restatable}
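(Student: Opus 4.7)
The plan is to prove the lemma by induction on $\ell = |V(P)|$. Throughout, we assume $N_D^+(x_1) = \{x_2\}$; the case $N_D^-(x_1) = \{x_2\}$ follows by passing to the converse of $D$. The crucial structural observation is that because $P$ is anti-directed and $H$ is $2$-regular, at every internal vertex $x_i$ of $P$ the two path-neighbors $x_{i-1}, x_{i+1}$ are both in-neighbors of $x_i$ or both out-neighbors, and together they exhaust $x_i$'s $H$-neighbors in that direction. In particular, deleting $x_1$ from $D$ leaves $x_2$ with $x_3$ as its unique neighbor in the relevant direction, so the inductive hypothesis will apply cleanly to the sub-path $x_2 x_3 \dots x_\ell$ in $D - x_1$.

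For the base case $\ell = 3$, we start from the given good $x_3$-triple $T = (\pi^{x_3}, \pi_{x_3}, \pi)$ of $D - \{x_1, x_2\}$ and insert $x_2$ and then $x_1$ using Observations \ref{obs:one} and \ref{obs}. At each insertion, Observation \ref{obs} offers two options for which of the two existing orderings plays the role of $\sigma^{\cdot}$ versus $\sigma_{\cdot}$, and we choose the option that preserves $\pi^*$ as a subordering of the target final ordering. At the $x_1$-insertion we crucially use $N_D^+(x_1) = \{x_2\}$: we place $x_1$ immediately before $x_2$ in the ordering that becomes the ``neutral'' third ordering $\sigma$, ensuring no backward arcs at $x_1$ there. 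The extreme placements of $x_1, x_2$ (front/back) in the other two orderings combined with the fact that $\pi^*$ is a subordering of one of $\pi^{x_3}, \pi_{x_3}$ give the four desired extensions.

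For the inductive step $\ell \geq 4$, we apply the lemma to $P' = x_2 x_3 \dots x_\ell$ in $D - x_1$, with the same $T$, obtaining a good $x_2$-triple $(\tau^{x_2}, \tau_{x_2}, \tau)$ of $D - x_1$ in which either $\tau^{x_2}$ or $\tau_{x_2}$ extends $\pi^*$, as we choose by invoking part (1) or (2) of the inductive conclusion. We then insert $x_1$ into this triple exactly as in the base case: place $x_1$ adjacent to $x_2$ in whichever ordering becomes $\sigma$, and at the front and back of the other two orderings. By again choosing which variant of Observation \ref{obs} to apply, we route the ordering built from the one containing $\pi^*$ into the role of $\sigma^{x_1}$ or $\sigma_{x_1}$ as required by the target case.

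The main obstacle is the bookkeeping: for each of the four combinations $(\pi^*, \text{extension target})$, we must simultaneously choose the inductive sub-case, decide which inductive ordering becomes $\sigma$, and verify that the placement of $x_1$ is consistent. The most delicate point is ensuring that the insertion of $x_1$ into the chosen sub-ordering creates no backward arcs at $x_1$: this requires $x_1$'s in-neighbors in $D$ (at most two, by $2$-regularity of $H$, and distinct from $x_2$) to all precede $x_2$ in that sub-ordering. This is where the detailed structure of the anti-directed path and the flexibility offered by the two variants in Observation \ref{obs} must be exploited, and the proof is organized as a careful case analysis once the inductive framework is in place.
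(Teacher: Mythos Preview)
Your overall framework---induction on $\ell$, reduction to $N_D^+(x_1)=\{x_2\}$ via the converse, and insertion of $x_1$ using Observations~\ref{obs:one} and~\ref{obs}---is exactly the paper's approach. However, the place where you declare the argument ``delicate'' and defer to an unspecified case analysis is precisely the one step the paper resolves cleanly, and without that resolution your proof is incomplete.

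The point you are missing is this: in the inductive step there is \emph{exactly one} ordering into which $x_1$ can be inserted with no backward arcs, namely $\tau_{x_2}$ (the ordering with $x_2$ last). Placing $x_1$ immediately before $x_2$ in $\tau_{x_2}$ works automatically, because $N_D^+(x_1)=\{x_2\}$ means the sole out-neighbour of $x_1$ is after it, and every other vertex (hence every in-neighbour) is before it. No structural information about where $x_1$'s in-neighbours sit relative to $x_2$ is needed, and no case analysis arises. Consequently $\tau_{x_2}$ is forced to become the third (``neutral'') ordering of the final triple, so the ordering carrying $\pi^*$ must be one of the other two. This means you must invoke part~(1) of the inductive hypothesis (giving $\pi^*\le\tau^{x_2}$), never part~(2); the two desired $x_1$-triples are then
\[
T_1=(x_1\tau^{x_2},\ \tau x_1,\ \tau_{x_1x_2})\quad\text{and}\quad T_2=(x_1\tau,\ \tau^{x_2}x_1,\ \tau_{x_1x_2}),
\]
coming from the two variants in Observation~\ref{obs}. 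Your base case is fine (iterating Observation~\ref{obs} twice recovers exactly the two triples $T'$ and $T''$ the paper writes down), but your inductive step needs this observation in place of the promised case analysis.
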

	
	\begin{proof}
        The proof is by induction on the order of $P$. First, we observe
        that it suffices to consider the case where $N_D^+(x_1) = \{x_2\}$.
        Indeed, if $N_D^-(x_1) = \{x_2\}$, then consider the converse $D^R$
        of $D$, the converse anti-directed path $P^R$ of $P$, \MA{and the good
        $x_\ell$-triple $T^R = ((\pi_{x_\ell})^R, (\pi^{x_\ell})^R,\pi^R)$
        where $\sigma^R$ denotes the reverse of the order $\sigma$. Let
        $\pi^* \in \{\pi^{x_\ell},\pi_{x_\ell}\}$. If the lemma holds for
        $D^R$, $P^R$, and $T^R$, then we may obtain a good $x_1$-triple
        $T^R_2 = (\sigma^{x_1},\sigma_{x_1},\sigma)$ of $D^R$ such that
        $(\pi^*)^R \leq \sigma_{x_1}$. Now we have $\pi^* \leq
        (\sigma_{x_1})^R$ and thus
        $((\sigma_{x_1})^R,(\sigma^{x_1})^R,\sigma^R)$ is the desired good
        $x_1$-triple $T_1$ of $D$. The triple $T_2$ is obtained similarly.}

        Now, suppose $|V(P)| = 3$. Let $D' = D-(V(P) \setminus \{x_3\})$
        and let $T = (\pi^{x_3},\pi_{x_3},\pi)$ be any good $x_3$-triple of
        $D'$. Pick $\pi^* \in \{\pi^{x_3},\pi_{x_3}\}$ arbitrarily. Since
        $P$ is an anti-directed path and $N_D^+(x_1) = \{x_2\}$ we have
        $A(P) = \{x_1x_2,x_3x_2\}$.  
		
		\newcommand{\xx}{\pi^{x_3x_2}}
		
		Denote by $\xx$ the ordering obtained by inserting $x_2$ into
		$\pi^{x_3}$ immediately after $x_3$. Now let
		\begin{alignat*}{5}
			&T' &&= (&&x_1\xx\ &&,\ x_2\pi_{x_3}x_1\ &&,\ \pi x_1x_2), \\
			&T'' &&= (&&x_1x_2\pi_{x_3}\ &&,\ \xx x_1\ &&,\ \pi x_1x_2).
		\end{alignat*}
		
		If $\pi^* = \pi^{x_3}$, then let $T_1 = T'$ and let $T_2 =
		T''$. We indeed have $\pi^{x_3} \leq \xx \leq x_1\xx$ and $\pi^{x_3}
		\leq \xx \leq \xx x_1$. If $\pi^* = \pi_{x_3}$, then let $T_1 = T''$
		and let $T_2 = T'$. We indeed have $\pi_{x_3} \leq x_1x_2\pi_{x_3}$ and
		$\pi_{x_3} \leq x_2\pi_{x_3}x_1$ as desired. It remains to be shown that
		$T'$ and $T''$ are good triples.

        \2
		
        \textbf{Claim~A:}  $T'$ and $T''$ are good triples of $D$.

		\begin{proof}[Proof of Claim A]
			\renewcommand{\qedsymbol}{$\diamond$}
			
            We refer to the arcs incident with $x_1$ and $x_2$ as
            \textit{new} arcs. For each triple, we show that each new arc
            is \YZ{a backward arc} with respect to exactly one ordering in
            that triple. First, observe that \YZ{there are no backward arcs
            incident with $x_2$} with respect to $\xx$ since \MA{all
            in-neighbors of $x_2$ ($x_3$) lie before $x_2$ and all other
            vertices lie after.} For a vertex $x$, let $A^+(x)$ ($A^-(x)$) be
            the arcs leaving (entering) $x$ in $D$.

            Consider $T'$. The set of new backward arcs with respect to
            $x_1\xx$ is exactly $A^-(x_1)$. \MA{The set of new backward
            arcs} with respect to $x_2\pi_{x_3}x_1$ is exactly $A^-(x_2)
            \cup A^+(x_1)$. \MA{Lastly, the set of new backwards arcs with
            respect to} $\pi x_1 x_2$ is exactly $A^+(x_2)$ since
            $N_D^+(x_1) = \{x_2\}$. We notice that every new arc is \YZ{a
            backward arc} in exactly one ordering of $T'$, and thus
            $T'$ is good.
			
            A similar observation shows that $T''$ is good: The set of new
            backward arcs is exactly $A^-(x_1) \cup A^-(x_2)$ with respect
            to $x_1 x_2 \pi_{x_3}$,  $A^+(x_1)$ with respect to $\xx x_1$,
            and $A^+(x_2)$ with respect to $\pi x_1 x_2$.
			
			This completes the proof of the claim.
		\end{proof}

        Now, suppose $|V(P)| \geq 4$ and the lemma holds for all shorter
        $P$. Let $T = (\pi^{x_\ell},\pi_{x_\ell},\pi)$ be any good
        $x_\ell$-triple of $D - (V(P) \setminus \{x_\ell \})$. It suffices
        to consider the case where $N^+_D(x_1) = \{x_2\}$ by the comment
        made in the beginning of the proof.
		
		Consider $D' = D-x_1$ and $P' = P-x_1 = x_2x_3\dots x_\ell$. Then $P'$
		is an anti-directed path in $D'$ with fewer vertices and $V(P') \geq
		3$. Furthermore, since $P$ is an anti-directed path in $D$ and $D$ is a
		subdigraph of the $2$-regular $H$, we have $N^-_D(x_2) = \{x_1,x_3\}$
		and thus $N^-_{D'}(x_2) = \{x_3\}$. Hence, we may apply the induction
		hypothesis to $D'$, $P'$ and $T$.
		
		Let $\pi^* \in \{\pi^{x_\ell},\pi_{x_\ell}\}$. By the induction
		hypothesis, there exists a good $x_2$-triple
		$(\sigma^{x_2},\sigma_{x_2},\sigma)$ of $D'$ such that $\pi^* \leq
		\sigma^{x_2}$. Let $\sigma_{x_1x_2}$ be the ordering obtained from
		$\sigma_{x_2}$ by inserting $x_1$ immediately before $x_2$. Let 
		\[
		T_1 = (x_1\sigma^{x_2},\sigma x_1, \sigma_{x_1x_2}) \text{ and } 
		T_2 = (x_1\sigma,\sigma^{x_2} x_1, \sigma_{x_1x_2}).
		\]
		
		Both $T_1$ and $T_2$ are $x_1$-triples, and we indeed have $\pi^* \leq
		\sigma^{x_2} \leq x_1\sigma^{x_2}$ and $\pi^* \leq \sigma^{x_2} \leq
		\sigma^{x_2} x_1$. Furthermore, there are no backward arcs incident with
		$x_1$ in $\sigma_{x_1x_2}$, the backward arcs incident with $x_1$ with
		respect to $x_1\sigma$ are exactly the arcs entering $x_1$, and the
		backward arcs incident with $x_1$ with respect to $\sigma^{x_2}x_1$ are
		exactly the arcs leaving $x_1$. This completes the inductive step and
		thus the proof.
	\end{proof}
	
	\subsection{Main theorem}

    We start by explaining how we plan to obtain a good triple of an
    arbitrary $H \in \D_{4,3}$. We may assume that $H$ is connected since
    the arcs of each connected component can be partitioned separately. By
    Lemma \ref{lem:nonregular} and Lemma \ref{lem:transitive}, we may also
    assume that $H$ is $2$-regular and free of transitive
    triangles. Let $x \in V(H)$ be arbitrary and consider $D = H-x$. Let
    $N^+_H(x) = \{b_1,b_2\}$ and $N^-_H(x) = \{a_1,a_2\}$. Our goal is to
    find a good triple $T = (\sigma_1,\sigma_2,\sigma_3)$ of $V(D)$ such
    $a_1$ and $a_2$ lie before $b_1$ and $b_2$ in some \MA{ordering in}
    $T$, say $\sigma_1$. If we can find such a triple, then we can insert
    $x$ between $\{a_1,a_2\}$ and $\{b_1,b_2\}$ in $\sigma_1$ without
    introducing any backward arcs. We insert $x$ \MAYZR{as the first element in} one of the other
    orderings (it does not matter which), say $\sigma_2$, and \MAYZR{as last} in
    $\sigma_3$. This way, we obtain a good triple of $D$.
	
	\main*
	
	\begin{proof}
		
        \MA{By the comment made before the proof, we may assume that $H$ is
        connected, $2$-regular and contains no transitive triangle.} Let $x
        \in V(H)$ be arbitrary and let $N^-_H(x) = \{a_1,a_2\}$ and
        $N^+_H(x) = \{b_1,b_2\}$. Let $D = H-x$. Recall that our goal is to
        find a good triple $T = (\sigma_1,\sigma_2,\sigma_3)$ of $D$ such
        that $a_1$ and $a_2$ lie before $b_1$ and $b_2$ in $\sigma_1$.
		
		Let $x_1$ be the unique out-neighbor of $a_2$ in $D$. We now construct an
		anti-directed path starting with the arc $a_2x_1$. By extending an
		anti-directed path $P$ we mean appending a vertex $u$ to $P$ such that
		$Pu$ is an anti-directed path. Obtain $P = a_2x_1x_2\dots x_\ell$ by
		extending the anti-directed path $a_2x_1$ until
		\begin{enumerate}[label=(\alph*)]
			\item\label{cond:max} there exists no vertex $u \in V(D)$ such
			that $Pu$ is an anti-directed path,
			\item\label{cond:a} $x_\ell = a_1$, or
			\item\label{cond:b} $x_\ell \in \{b_1,b_2\}$.
		\end{enumerate}
		
		We consider the case where $P$ satisfies conditions \ref{cond:max},
		\ref{cond:a}, and \ref{cond:b} separately. The three cases are
		illustrated in Figure \ref{fig:cases}.
		
		\newcommand{\base}{
			\fontsize{10}{10}\selectfont
			\tikzset{antipath/.style={<->, line width=0.03cm, decorate,decoration={snake,amplitude=.3mm,segment
						length=2mm,post length=1mm,pre length=1mm}}}
			\coordinate (a1pos) at (0,1.5);
			\coordinate (a2pos) at (1.5,1.5);
			\coordinate (xpos) at (0.75,0.75);
			\coordinate (b1pos) at (0,0);
			\coordinate (b2pos) at (1.5,0);
			\node[vertexB] (a1) at (a1pos) {$a_1$};
			\node[vertexB] (a2) at (a2pos) {$a_2$};
			\node[vertexB] (x) at (xpos) {$x$};
			\node[vertexB] (b1) at (b1pos) {$b_1$};
			\node[vertexB] (b2) at (b2pos) {$b_2$};
			
			\draw[arc,dashed] (a1) -- (x);
			\draw[arc,dashed] (a2) -- (x);
			\draw[arc,dashed] (x) -- (b1);
			\draw[arc,dashed] (x) -- (b2);
		}
		
		\begin{figure}[h]
            \renewcommand\thesubfigure{\arabic{subfigure}}
			\centering
			
			\begin{subfigure}[b]{0.3\textwidth}
				\centering
				\begin{tikzpicture}[node distance=1.5cm]
					\base
					\node[vertexB] (x1) [right of=a2] {$x_1$};
					\node[vertexB] (xl) [right= 0.8cm of x1] {$x_\ell$};
					
					\draw[arc] (a2) -- (x1);
					\draw[antipath] (x1) to (xl);
					
				\end{tikzpicture}
				\caption{$P$ satisfies \ref{cond:max}}
			\end{subfigure}
			\begin{subfigure}[b]{0.3\textwidth}
				\centering
				\begin{tikzpicture}[node distance=1.5cm]
					\base
					
					\node[vertexB] (x1) [above of=a2] {$x_1$};
					\draw[bend right,antipath] (x1) to (a1);
					
					\draw[arc] (a2) -- (x1);
				\end{tikzpicture}
				\caption{$P$ satisfies \ref{cond:a}}
			\end{subfigure}
			\begin{subfigure}[b]{0.3\textwidth}
				\centering
				\begin{tikzpicture}[node distance=1.5cm]
					\base
					\node[vertexB] (x1) [right of=a2] {$x_1$};
					\draw[bend left, antipath] (x1) to (b2);
					
					\draw[arc] (a2) -- (x1);
				\end{tikzpicture}
				\caption{$P$ satisfies \ref{cond:b} ($x_\ell = b_2$)}
			\end{subfigure}
			\caption{The three cases of $P$. Bidirectional squiggly edges symbolize
				anti-directed paths.}
			\label{fig:cases}
		\end{figure}
		
		In each case, we obtain the
		desired good triple of $D$. In all cases, let \[
            D'=D-(V(P)\setminus\{x_\ell\})
		\]
		Furthermore, note that $|V(P)| \geq 3$, as there are no transitive
        triangle in $H$ and $H$ is $2$-regular.
		Furthermore, it is easy to show by contradiction
		that no subgraph of $D$ contains a $2$-regular component, and thus we
		may apply Lemma \ref{lem:nonregular}.
		
		\quad 
		
		\noindent\textbf{Case 1}: There exists no vertex $u \in V(D)$ such that
		$Pu$ is an anti-directed path.
		
		Since $a_1$ is unbalanced in $D-V(P)$, by Lemma \ref{lem:nonregular},
		there exists a good $a_1$-triple $T' = (\pi^{a_1},\pi_{a_1},\pi)$ of
		$D-V(P)$. 
		
		We now insert $x_\ell$ into $T'$. 
		Suppose $x_{\ell-1}x_\ell \in A(P)$. Then $x_\ell$ cannot have an
		in-neighbor $u$ in $V(D')$ since then $Pu$ is a longer anti-directed
		path in $D$. Thus, the triple $T = (x_\ell\pi^{a_1},\pi_{a_1}x_\ell,
		x_\ell\pi)$ is a good $x_\ell$-triple of $D'$.
		Suppose $x_{\ell}x_{\ell-1} \in A(P)$. Then $x_\ell$ cannot have an
		out-neighbor $u$ in $V(D')$ since then $Pu$ is a longer anti-directed
		path in $D$. Thus, the triple $T = (x_\ell\pi^{a_1},\pi_{a_1}x_\ell,
		\pi x_\ell)$ is a good $x_\ell$-triple of $D'$. In any case, $T$ is a
		good $x_\ell$-triple $(\alpha^{x_\ell}, \alpha_{x_\ell},\alpha)$ of
		$D'$ with $\pi^{a_1} \leq \alpha^{x_\ell}$. Now, by Lemma
		\ref{lem:antidirected}, there exists a good $a_2$-triple
		$(\sigma^{a_2},\sigma_{a_2},\sigma)$ of $D$ such that $\pi^{a_1} \leq
		\sigma^{a_2}$. Since $a_1$ is before $b_1$ and $b_2$ in $\pi^{a_1}$,
		$\pi^{a_1} \leq \sigma^{a_2}$, and $a_2$ is first in $\sigma^{a_2}$, we
		have that both $a_1$ and $a_2$ are before $b_1$ and $b_2$ in
		$\sigma^{a_2}$ which is what we wanted. This completes the proof of
		Case 1.
		
		\quad
		
		\noindent \textbf{Case 2}: $x_\ell = a_1$.
		
		This is the simplest case.  We obtain a good $a_1$-triple
		$(\pi^{a_1},\pi_{a_1},\pi)$ of $D'$ by Lemma
		\ref{lem:nonregular} since $a_1$ is unbalanced in $D'$. By Lemma
		\ref{lem:antidirected}, there exists a good $a_2$-triple
		$(\sigma^{a_2},\sigma_{a_2},\sigma)$ of $D$ such that $\pi^{a_1}
		\leq \sigma^{a_2}$. Since $a_1$ is before $b_1$ and $b_2$ in
		$\pi^{a_1}$ and $a_2$ is first in $\sigma^{a_2}$, we have again
		obtained the desired good triple of $D$.
		
		\quad
		
		\noindent \textbf{Case 3}: $x_\ell \in \{b_1,b_2\}$.
		
		Assume without loss of generality that $x_\ell = b_2$. This case is
		more involved than the previous cases. We consider three subcases
		depending on the arcs incident with $b_2$. The subcases are illustrated
		in Figure \ref{fig:casesb}.
		
		\begin{figure}[h]
			\centering
			\begin{subfigure}[b]{0.24\textwidth}
				\centering
				\begin{tikzpicture}[node distance=1.5cm]
					\base
					\node[vertexB] (xlm1) [right of= b2] {$x_{\ell-1}$};

					% sguiggly arc path from a2 to b2
					\draw[bend left, antipath] (a2) to node[below left] {$P$}
					(xlm1);
					
					\draw[arc] (xlm1) -- (b2);
				\end{tikzpicture}
				\caption{Subcase 3a}
			\end{subfigure}
			\begin{subfigure}[b]{0.24\textwidth}
				\centering
				\begin{tikzpicture}[node distance=1.5cm]
					\base
					\node[vertexB] (xlm1) [right of= b2] {$x_{\ell-1}$};

					\draw[bend left, antipath] (a2) to node[below left] {$P$}
					(xlm1);
					
					\draw[arc] (b2) -- (xlm1);
					
					\draw[arc, dotted, thick] (b2) to[bend right] (a1);
				\end{tikzpicture}
				\caption{Subcase 3b}
			\end{subfigure}
			\begin{subfigure}[b]{0.24\textwidth}
				\centering
				\begin{tikzpicture}[node distance=1.5cm]
					\base
					
					\node[vertexB] (xlm1) [right of= b2] {$x_{\ell-1}$};
					\node[vertexB] (s1) [below of=b2] {$s_1$};
					\node[vertexB] (sk) [below of=b1] {$s_k$};

					% sguiggly arc path from a2 to b2
					\draw[bend left, antipath] (a2) to node[below left] {$P$}
					(xlm1);
					\draw[antipath] (s1) to node[above right] {$Q$} (sk);
					
					\draw[arc] (b2) -- (xlm1);
					\draw[arc] (b2) -- (s1);
					
				\end{tikzpicture}
                \renewcommand\thesubfigure{c1}
				\caption{Subsubcase 3c1}
			\end{subfigure}
			\begin{subfigure}[b]{0.24\textwidth}
				\centering
				\begin{tikzpicture}[node distance=1.5cm]
					\base
					
					\node[vertexB] (xlm1) [right of= b2] {$x_{\ell-1}$};
					\node[vertexB] (s1) [below of=b2] {$s_1$};

					% sguiggly arc path from a2 to b2
					\draw[bend left,antipath] (a2) to node[below left] {$P$}
					(xlm1);
					\draw[bend left, antipath] (s1) to node[above right]
					{$Q$} (b1);
					
					\draw[arc] (b2) -- (xlm1);
					\draw[arc] (b2) -- (s1);
					
				\end{tikzpicture}
                \renewcommand\thesubfigure{c2}
				\caption{Subsubcase 3c2}
			\end{subfigure}
			\caption{The subcases of Case 3. Bidirectional squiggly edges
				symbolize anti-directed paths.}
			%\label{fig:cases}
			\label{fig:casesb}
		\end{figure}

		\quad
		
		\noindent \textbf{Subcase 3a}: $x_{\ell-1} b_2 \in A(P)$.
		
		By Lemma \ref{lem:nonregular}, let $(\pi^{a_1},\pi_{a_1},\pi)$ be good
		$a_1$-triple of $D-\YZ{V(P)}$. The in-neighbors of $b_2$ in $H$ are $x$ and
		$x_{\ell-1}$, and thus $b_2$ has no in-neighbors in $D'$ and hence the
		$b_2$-triple $T = (b_2\pi_{a_1},\pi^{a_1}b_2,b_2\pi)$ of $D'$ is good.
		By Lemma \ref{lem:antidirected}, there exists a good triple
		$(\sigma^{a_2},\sigma_{a_2},\sigma)$ of $D$ such that $\pi^{a_1}b_2
		\leq \sigma^{a_2}$, which is what we wanted.
		
		\quad
		
		\noindent \textbf{Subcase 3b}: $b_2x_{\ell-1} \in A(P)$ and $b_2$ has
		no out-neighbor distinct from $a_1$ in $D'$.
		
		By Lemma \ref{lem:nonregular}, let $T' = (\pi^{a_1},\pi_{a_1},\pi)$ be
		a good $a_1$-triple of $D-\YZ{V(P)}$. 
		
		Let $T = (b_2\pi,\pi^{a_1}b_2,\pi_{b_2a_1})$ where $\pi_{b_2a_1}$ is
		the ordering obtained from $\pi_{a_1}$ by inserting $b_2$ immediately
		before $a_1$. Then $\pi_{b_2a_1}$ has no backward arcs incident with
		$b_2$ since $b_2$ has no out-neighbor distinct from $a_1$. We use here
		that $a_1\YZ{b_2} \notin A(D')$ since this would imply that $\{a_1,b_2,x\}$ induces a transitive triangle in $H$. The backward arcs incident with
		$b_2$ with respect to $b_2\pi$ are exactly the arcs entering $b_2$ and
		the backward arcs incident with $b_2$ with respect to $\pi^{a_1}b_2$ are
		exactly the arcs leaving $b_2$. Thus, $T$ is a good $b_2$-triple of
		$D'$. We can now apply Lemma \ref{lem:antidirected} to obtain a good
		$a_2$-triple $(\sigma^{a_2},\sigma_{a_2},\sigma)$ of $D$ such that
		$\pi^{a_1}b_2 \leq \sigma^{a_2}$, and we have obtained the desired good
		triple of $D$.
		
		\quad
		
		\noindent \textbf{Subcase 3c}: $b_2x_{\ell-1} \in A(P)$ and $b_2$ has an out-neighbor
		$s_1 \neq a_1$ in $D'$.
		
		In this case, we extend the anti-directed path
		$b_2s_1$ in $D'$, obtaining $Q = b_2s_1\dots s_k$, such that
		
		\begin{enumerate}[label=(\alph*)]
			\item\label{cond:maxQ} there exists no $u \in V(D')$ such that $Qu$
			is an anti-directed path in $D'$ or
			\item\label{cond:skQ} $s_k \in \{a_1,b_1\}$.
		\end{enumerate}

		Let $D'' = D'-(V(Q) \setminus \{s_k\})$. We consider the cases where $Q$ satisfies
		conditions \ref{cond:maxQ} and \ref{cond:skQ} separately. In each case,
		we obtain a good $b_2$-triple $T = (\sigma^{b_2}, \sigma_{b_2},\sigma)$
		of $D'$ such that $a_1$ is before $b_1$ and $b_2$ in $\sigma_{b_2}$.
		This is sufficient since then, by Lemma \ref{lem:antidirected} applied
		to $D'$, $P$, and $T$, we obtain a good $a_2$-triple $(\alpha^{a_2},
		\alpha_{a_2},\alpha)$ of $D$ such that $\sigma_{b_2} \leq \alpha^{a_2}$, which is what we want.

		\quad
		
		\noindent \textbf{Subsubcase 3c1}: There exists no $u \in V(D')$ such that $Qu$
		is an anti-directed path.
		
        By Lemma \ref{lem:nonregular}, let $\MAYZR{T''} = (\pi^{a_1},\pi_{a_1},\pi)$ be
        a good $a_1$-triple of $D''$. We insert $Q$ into $\MAYZR{T''}$ to obtain the
		desired good $b_2$-triple $T$ of $D'$. By the definition of this
		subsubcase, $s_k$ either has no in-neighbors or no out-neighbors in
		$D''$. If $s_k$ has no in-neighbors in $D''$, then let $T' =
		(s_k\pi^{a_1},\pi_{a_1}s_k,s_k\pi)$. If $s_k$ has no out-neighbors in
		$D''$, then let $T' = (s_k\pi^{a_1},\pi_{a_1} s_k,\pi s_k)$. In either
		case, $T'$ is a good $s_k$-triple of $D''$. 
		
		If $|V(Q)| \geq 3$ then by Lemma \ref{lem:antidirected} applied to $Q$,
		we obtain a good $b_2$-triple $T = (\sigma^{b_2},\sigma_{b_2},\sigma)$
		of $D'$ such that $s_k\pi^{a_1} \leq \sigma_{b_2}$ as desired.
		
		If $|V(Q)| = 2$, then we manually insert $s_1$ and $b_2$ into $T'$.
		Observe $s_k = s_1$ which is an out-neighbor of $b_2$. Thus, $s_1$
		cannot have any in-neighbors in $D''$ since then $Q$ could be extended.
		Therefore, the triple $(s_1\YZ{\pi^{a_1}},\pi_{a_1}s_1,\YZ{s_1\pi})$ is good.
		Furthermore, $b_2$ has no out-neighbor distinct from $s_1$ in $D'$
		since $b_2x_{\ell-1} \in A(P)$ in this subcase. Thus, the $b_2$-triple
		\YZ{$(b_2 s_1\pi, s_1\pi^{a_1}b_2, \pi_{a_1}b_2s_1$)} is good and we have
		again obtained the desired good triple of $D'$.
		
		This completes the proof of Subsubcase 3c1.
		
		\quad
		
		\noindent \textbf{Subsubcase 3c2}: $s_k \in \{a_1,b_1\}$.
		
		Let $T' = (\pi^{s_k},\pi_{s_k},\pi)$ be a good $s_k$ triple of $D''$.
		If $s_k = a_1$, then $a_1$ is before $b_1$ in $\pi^{s_k}$ and if $s_k =
		b_1$, then $a_1$ is before $b_1$ in $\pi_{s_k}$. Thus, $T'$ is a good
		$s_k$-triple of $D''$ such that $a_1$ is before $b_1$ in either
		$\pi^{s_k}$ or $\pi_{s_k}$. Let $\pi^* \in \{\pi^{s_k},\pi_{s_k}\}$ be
		such that $a_1$ is before $b_1$ in $\pi^*$.
		We have $|V(Q)| \geq 3$ since $s_1 \neq a_1$ and also $s_1$ cannot be
		$b_1$ since then $\{b_1,b_2,x\}$ induces a transitive triangle in $H$.
		Thus, by \YZ{Lemma \ref{lem:antidirected}}, we obtain a good $b_2$-triple $T =
		(\sigma^{b_2},\sigma_{b_2},\sigma)$ of $D'$ such that $\pi^* \leq
		\sigma_{b_2}$. 
		
		This completes the proof of Subsubcase 3c2.
		
		\quad
		
		We now return to the proof of the theorem. In all of the above cases,
		we obtain a good $a_2$-triple $T' = (\pi^{a_2},\pi_{a_2},\pi)$ of $D$
		such that $a_2$ and $a_1$ are before $b_1$ and $b_2$ in $\pi^{a_2}$.
		Now, obtain $\sigma$ by inserting $x$ into $\pi^{a_2}$ after $a_2$ and
		$a_1$ but before $b_1$ and $b_2$. Then, no arcs incident with $x$ are
		\YZ{backward arcs} with respect to $\sigma$ and thus $T = (x\YZ{\pi},\pi_{a_2}x,\sigma)$ is
		a good triple of $H$ as desired. 
	\end{proof}
	
	\section{Proving $\fasd(3,g)=g$ for $3\le g\le 5$ and  $\fas(3,6)=1/6$}\label{sec: fasd(3,4)=4}
    \label{sec:d=3}

	\begin{restatable}{thm}{girthIII}  \label{thm:girthIVdegIII}
$\fasd(3,g)=g$ for $g\in \{3,4,5\}$.
	\end{restatable}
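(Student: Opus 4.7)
The plan is to prove, for each $g\in\{3,4,5\}$, that every non-acyclic $D\in\D_{3,g}$ admits a good $g$-arc-coloring, equivalently $\fasd(D)\ge g$; combined with $\fasd(3,g)\le\fasd(C_g)=g$ witnessed by the directed $g$-cycle $C_g\in\D_{3,g}$, this gives $\fasd(3,g)=g$. Case $g=3$ is immediate from Theorem~\ref{thm:main} since $\D_{3,3}\subseteq\D_{4,3}$, so the real content lies in $g\in\{4,5\}$.

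For these cases I would generalize the good-triple machinery of Section~\ref{sec: fas_w(4,3)}: define a \emph{good $g$-tuple} $(\sigma_1,\dots,\sigma_g)$ of $V(D)$ as a tuple of orderings in which every arc is backward in exactly one $\sigma_i$; such a tuple corresponds to a decomposition of $A(D)$ into $g$ feedback arc sets. Observations~\ref{obs:one} and~\ref{obs} extend routinely: if $v$ is inserted into some $\sigma_i$ with no incident backward arc, placing $v$ first in $g-2$ of the remaining orderings and last in one produces a good $g$-tuple of $D$ from one of $D-v$. I would run induction on $|V(D)|$, WLOG assuming $D$ connected. Vertices with $d^+(v)=0$ or $d^-(v)=0$ lie on no cycle and can be deleted and reinserted freely, so we may assume every vertex has positive in- and out-degree and thus type in $\{(1,1),(1,2),(2,1)\}$.

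The inductive step looks for a reducible configuration: a vertex $v$ of type $(1,1)$ with in-arc $uv$ and out-arc $vw$ that is not on any cycle of length exactly $g$ (so suppressing $v$ by replacing $u\to v\to w$ with $uw$ preserves the girth bound and yields a smaller digraph in $\D_{3,g}$), or a vertex of type $(1,2)$ or $(2,1)$ amenable to the anti-directed-path insertion of Lemma~\ref{lem:antidirected}. When $D$ is $3$-regular and every type-$(1,1)$ vertex sits on a $g$-cycle, I would base the argument on a shortest directed cycle $C$ of length exactly $g$: its arcs must form a rainbow of all $g$ colors, which essentially fixes the coloring on $A(C)$, and the remaining arcs can then be handled by extending an anti-directed path out of $C$ in the style of Lemma~\ref{lem:antidirected}, using that each vertex of $C$ has exactly one arc outside $C$ to partition or absorb.

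The main obstacle is the extension step: any directed cycle of length exactly $g$ that meets a newly inserted vertex or newly colored arc must still receive all $g$ colors, and overlapping short cycles yield a coupled system of rainbow constraints. For $g=4$ there is enough slack because a $4$-cycle through a suppressed type-$(1,1)$ vertex involves only two ``other'' arcs, giving two free colors to repair; for $g=5$ the tightest cases are Petersen-like $3$-regular girth-$5$ orgraphs, where the argument still closes but only just. The paper's remark that only $\fas(3,6)=1/6$ (rather than $\fasd(3,6)=6$) is established at $g=6$ suggests that beyond $g=5$ the local rainbow constraints on overlapping $6$-cycles become infeasible to reconcile locally, which I would expect to be the exact point at which this plan fails.
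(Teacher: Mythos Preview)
Your reduction of $g=3$ to Theorem~\ref{thm:main} is fine, but the proposed extension to $g\in\{4,5\}$ has a fatal flaw at its very first step. In your generalization of Observation~\ref{obs} you say: insert $v$ into one ordering $\sigma_i$ with no incident backward arc, then place $v$ first in $g-2$ of the remaining orderings and last in one. But then every arc $uv$ entering $v$ is backward in all $g-2$ orderings where $v$ is first, not in exactly one. The good-triple mechanism works for $g=3$ precisely because $3=1+1+1$ (one ``neutral'' insertion, one ``first'', one ``last''); for $g\ge 4$ you would need $g-2$ neutral insertions of $v$, i.e.\ $g-2$ orderings in which all in-neighbors of $v$ precede all out-neighbors of $v$, and nothing in your plan produces that. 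This is not a technical oversight: it is exactly why the ordering-tuple method does not scale, and why the paper abandons it entirely for $g\ge 4$.

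The paper's proof is structurally unrelated to the Section~\ref{sec: fas_w(4,3)} machinery. It works directly with arc-colorings and a minimum counterexample $D$: one shows $D$ is strong, so $V(D)=X_{11}\cup X_{12}\cup X_{21}$ by degree type, and then bounds the length of a shortest $(X_{12},X_{21})$-path. A short case analysis on the in-neighbors of the start of this path handles everything except $g=5$ with an $X_{12}\to X_{21}$ arc $p_1p_2$; that last case is dispatched by a delicate argument about ``special'' $5$-arc-colorings of $D-\{p_1,p_2\}$ (monochromatic on $A^\pm(x)$ for each neighbor $x$ of $p_1,p_2$) and how colors can be permuted around those neighbors. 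None of your ingredients---suppressing $(1,1)$-vertices, anti-directed paths out of a shortest cycle, rainbow constraints on overlapping cycles---appear, and the vague ``handled in the style of Lemma~\ref{lem:antidirected}'' is not a plan even modulo the error above.
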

	
	\begin{proof}
		 \YZ{Observe that the theorem is equivalent to the statement that if $g\in\{3,4,5\}$, then every orgraph $D \in \YZ{{\cal D}_{3,g}}$ admits a good $g$-arc-coloring.}
		Assume that the theorem is false and that $g \in \{3,4,5\}$ and $D \in \YZ{{\cal D}_{3,g}}$ is a graph of girth at least $g$ of minimum possible order,
		such that $D$ has no good $g$-arc-coloring.
		We will now obtain a contradiction, thereby completing the proof.
		By the minimality of the order of $D$ we may assume that every subgraph $D'$ of $D$ with fewer vertices than $D$ has a good $g$-arc-coloring.
		We first prove the following claims.

		\2
	
		{\bf Claim~A:} \YZ{$D$ is strongly connected. In particular, $\delta^+(D) \geq 1$ and $\delta^-(D) \geq 1$.}
		
		%\2
		
		%{\bf Proof of Claim~A:} 
        \begin{proof}[Proof of Claim A]
            \renewcommand{\qedsymbol}{$\diamond$}
        For the sake of contradiction assume $D$ is not strongly connected.
		By the minimality of $|V(D)|$ we can partition the arc set in each strong component of $D$ into $g$ feedback arc sets.
		Merging these we obtain $g$ arc-disjoint feedback arc sets of $D$, a contradiction. \YZ{Thus, $D$ is strongly connected. Now, if $d^+(x)=0$ or $d^-(x)=0$, then $x$ is a strongly connected component and therefore $V(D)=\{x\}$, which contradicts the fact that $D$ has no good $g$-arc-coloring.} This proves Claim~A.
        \end{proof}

		%\2
		
		{\bf Claim~B:} Let $X_{12} = \{ x \; | \; d^+(x)=1, \; \; d^-(x)=2 \}$ and
		$X_{21} = \{ x \; | \; d^+(x)=2, \; \; d^-(x)=1 \}$ and
		$X_{11} = \{ x \; | \; d^+(x)=1, \; \; d^-(x)=1 \}$. Then $V(D)=X_{12} \cup X_{21} \cup X_{11}$ and $|X_{21}|=|X_{12}| \geq 1$.

		%\2
		
		%{\bf Proof of Claim~B:} 

        \begin{proof}[Proof of Claim B]
            \renewcommand{\qedsymbol}{$\diamond$}
        The fact that $V(D)=X_{12} \cup X_{21} \cup X_{11}$ follows from
        Claim~A and the fact that $D$ has maximum degree at most three. The
        following now holds:
		\[
		|V(D)| + |X_{21}| = \sum_{v \in V(D)} d^+(v) = a(D) = \sum_{v \in V(D)} d^-(v) = |V(D)| + |X_{12}|.
		\]
		
		Therefore $|X_{21}|=|X_{12}|$. First assume that $X_{12} = \emptyset$, which implies that $X_{21} = \emptyset$ and $V(D)=X_{11}$.
		\MAYZR{Then, by Claim A, $D$ is a cycle with length at least $g$}, as the girth of $D$ is at least $g$. 
		So we can \YZ{obtain a $g$-good-coloring of $D$ by coloring at least one arc in every cycle with each color, a contradiction.} So, $|X_{21}|=|X_{12}| \geq 1$. \YZ{This proves Claim B.}
        \end{proof}
		
		%\2
		
		\YZ{Thus, by Claim~A and B there exists a $(X_{12},X_{21})$-path in $D$.}

                \2
		
		{\bf Claim~C:} 
		If $P=p_1 p_2 p_3 \cdots p_l$ is a shortest $(X_{12},X_{21})$-path in $D$ then $l \leq g-2$.

		%\2
		
		%{\bf Proof of Claim~C:}

        \begin{proof}[Proof of Claim C]
            \renewcommand{\qedsymbol}{$\diamond$}
		Let $P=p_1 p_2 p_3 \cdots p_l$ be a shortest $(X_{12},X_{21})$-path in $D$ and assume that $l \geq g-1$ and note that $\{p_2,p_3,\ldots,p_{l-1}\} \subseteq X_{11}$.
		Let $D' = D-V(P)$ and note that $D'$ has a good $g$-arc-coloring. Take such a coloring and color all arcs into $p_1$ with color $1$, color $p_{i-1} p_{i}$ with color $i$ for all $i=2,3,\ldots,g-1$ and color all arcs out of $p_l$ with color $g$ (any remaining arcs can be colored arbitrarily \YZ{if $l\geq g$}). This coloring is a good coloring of $D$, contradicting the fact that $D$ does not have such a coloring, and thereby proving Claim~C.
        \end{proof}
		
		%\2
		
	\MAYZR{Note that, by Claim~C, we have $g \ge 4$, since if $g = 3$, then $l \le 1$, which contradicts the fact that $P$ is a $(X_{12},X_{21})$-path. }
	
	\2
		
		{\bf Claim~D:} 
		If $P=p_1 p_2 p_3 \cdots p_l$ is defined as in Claim~C, then $l \leq g-3$.

		%\2
		
		%{\bf Proof of Claim~D:} 
        \begin{proof}[Proof of Claim D]
            \renewcommand{\qedsymbol}{$\diamond$}
        \MAYZR{Assume, for the sake of contradiction, that $l \ge g-2$. Then, by Claim~C, we must have $l = g-2$. As $g\geq 4$, $l\in \{2,3\}$.}
		Let $N^-(p_1) = \{w_1,w_2\}$ and consider the following three cases, which complete the proof of Claim~D.
		
        \begin{figure}[h]
            \centering
            \newcommand{\baseD}{
                    \node[vertexB] (p1) {$p_1$};
                    \node[vertexB] (p2) [right of = p1] {$p_2$};
                    \node[vertexB] (p3) [right of = p2] {$p_3$};

                    \node[vertexB] (w1) [above left of=p1] {$w_1$};
                    \node[vertexB] (w2) [below left of=p1] {$w_2$};

                    \draw[arc] (w1) -- (p1);
                    \draw[arc] (w2) -- (p1);
                    \draw[arc] (p1) -- (p2);
                    \draw[arc] (p2) -- (p3);

                    \draw[arc] (p3) -- +(0.5,0.3);
                    \draw[arc] (p3) -- +(0.5,-0.3);
                }

            \begin{subfigure}{0.32\textwidth}
                \centering
                \begin{tikzpicture}[node distance=1.5cm]
                	 
                    \baseD
                    \draw[arc,<-] (w1) -- +(-0.5,0.3);
                    \draw[arc,<-,dotted] (w1) -- +(-0.5,-0.3);

                    \draw[arc,<-] (w2) -- +(-0.5,0.3);
                    \draw[arc,<-,dotted] (w2) -- +(-0.5,-0.3);
                \end{tikzpicture}
                \subcaption{$w_1,w_2 \in X_{12} \cap X_{11}$}
            \end{subfigure}
            \begin{subfigure}{0.32\textwidth}
                \centering
                \begin{tikzpicture}[node distance=1.5cm]
                	
                    \baseD
                    \node[vertexB] (z1) at (-1.2,0.95) {$z_1$};
                      \node[vertexB] (z2) at (-1.2,-0.2) {$z_2$};
                    %\draw[arc,<-] (w1) -- +(-0.5,0.3);
                    \draw[arc,<-] (w1) -- +(z1);
                    \draw[arc] (w1) -- +(0.5,0.3);

                    %\draw[arc,<-] (w2) -- +(-0.5,0.3);
                   \draw[arc,<-] (w2) -- (z2);
                   \draw[arc] (w2) -- +(0.5,-0.3);
                \end{tikzpicture}
                \subcaption{$w_1,w_2 \in X_{21}$}
            \end{subfigure}
            \begin{subfigure}{0.32\textwidth}
                \centering
                \begin{tikzpicture}[node distance=1.5cm]
                    \baseD
                         \node[vertexB] (z2) at (-1.2,-0.2) {$z_2$};
                    \draw[arc,<-] (w1) -- +(-0.5,0.3);
                    \draw[arc,<-,dotted] (w1) -- +(-0.5,-0.3);

                      \draw[arc,<-] (w2) -- (z2);
                  %  \draw[arc,<-] (w2) -- +(-0.5,0.3);
                    \draw[arc] (w2) -- +(0.5,-0.3);
                \end{tikzpicture}
                \subcaption{$w_1 \in X_{11} \cup X_{12}, w_2 \in
                X_{21}$}
            \end{subfigure}
            \caption{The three cases in the proof of Claim D when $l=3$. Dotted arcs
            can be absent.}
            \label{fig:claimDcases}
        \end{figure}
		
		{\bf Case (a), $w_1,w_2 \in X_{12} \cup X_{11}$.}  
		Let \MAYZR{$D' = D-\{w_1,w_2,p_1,\dots, p_l\}$ (recall that $l\in \{2,3\}$)} and note that $D'$ has a good $g$-arc-coloring.
		Take such a coloring and color all arcs into $w_1$ or $w_2$ with color $1$, color $w_1 p_1$  and $w_2 p_1$ with color $2$, the arc $p_1 p_2$ with color $3$ and color all arcs out of $p_2$ with color $4$. Finally if $g=5$ (and $l=3$) then color all arcs out of $p_3$ with color $5$. This coloring is a good coloring of $D$, contradicting the fact that $D$ does not have such a coloring.
		
		\2
		
		{\bf Case (b), $w_1,w_2 \in X_{21}$.} In this case let $z_i w_i$ be the arc into $w_i$ in $D$ for $i=1,2$ and let 
		Let $D' = D-\{p_1,p_2\}$ and note that $D'$ has a good $g$-arc-coloring.
		Let $c$ be such a $g$-arc-coloring of $D'$ and, by possibly permuting the colors, we may without loss of generality assume that $c(z_1w_1)=1$ and $c(z_2w_2) \in \{1,2\}$.
		We now color $w_1 p_1$ with color 2 and we color $w_2 p_1$ with color $3-c(z_2w_2)$.
		We then color the arc $p_1 p_2$ with color $3$ and we color all arcs out of $p_2$ with color $4$.
		Finally if $g=5$ (and $l=3$) then color  
		all arcs out of $p_3$ with color $5$. 
		This coloring is a good coloring of $D$,
		contradicting the fact that $D$ does not have such a coloring.
		
		\2
		
		{\bf Case (c), $w_i \in X_{12} \cup X_{11}$ and $w_{3-i} \in X_{21}$, for some $i\in \{1,2\}$.}  We can without loss of generality assume that $i=1$. We proceed analogously to Case~1 and Case~2. Let $z_2 w_2$  be the arc into $w_2$ in $D$.
		
		Let \MAYZR{$D' = D-\{w_1,p_1,\dots,p_l\}$} and note that $D'$ has a good $g$-arc-coloring. 
		Let $c$ be such a $g$-arc-coloring of $D'$ and, by possibly permuting the colors, we may without loss of generality assume that $c(z_2w_2)=1$.
		We now color all arcs into $w_1$ with color 1 and we color $w_1 p_1$  and $w_2 p_1$ with color $2$ and we color arc $p_1 p_2$ 
		with color $3$ and we color all arcs out of $p_2$ with color $4$. 
		Finally if $g=5$ (and $l=3$) then color
		all arcs out of $p_3$ with color $5$. 
		This coloring is a good coloring of $D$,
		contradicting the fact that $D$ does not have such a coloring.
		This completes the proof of Claim~D.
        \end{proof}
		
		%\2

		%============XXXXXXXX
		
		We now return to the proof of the theorem.  By Claim~D we note that $g=5$ and $l=2$. \MAYZR{Thus, let $p_1 p_2$ be an arc from $X_{12}$ to $X_{21}$.} Let $N^-(p_1) = \{w_1,w_2\}$ and let $N^+(p_2)=\{q_1,q_2\}$ and
		let $D^* = D - \{p_1,p_2\}$.
		Recall that a $5$-arc coloring of a digraph is good if every cycle contains arcs of all $5$ colors.  
		We note that an equivalent definition is that every color induces a feedback arc set.
		By the minimality of $|V(D)|$ we note that $D^*$ has a good $5$-arc-coloring.
		
		Let $A^-(x)$ denote all arcs into a vertex $x$ and let $A^+(x)$ denote all arcs out of $x$ \YZ{in $D^*$}.
		We call a $5$-arc-coloring of $D^*$ special if it is good and each of the sets $A^-(x)$ and $A^+(x)$ are both monochromatic for each
		$x \in WQ$, where $WQ=\{w_1,w_2,q_1,q_2\}$. 
		If $c$ is a special $5$-arc-coloring of $D^*$ then $c(A^+(x))$ denotes the unique color of the arcs in $A^+(x)$ (if $A^+(x) \not= \emptyset$) and 
		$c(A^-(x))$ denotes the unique color of the arcs in $A^-(x)$ (if $A^-(x) \not= \emptyset$) for all $x \in WQ$.
		
		We now prove the following claims.
		
		\2
		
		{\bf Claim~E:} $D^*$ contains a special $5$-arc-coloring.
		
		%\2
		
		%{\bf Proof of Claim~E:}  
        \begin{proof}[Proof of Claim E]
            \renewcommand{\qedsymbol}{$\diamond$}
        By the minimality of $|V(D)|$ we note that $D^*$ has a good $5$-arc-coloring, $c$.
		Let $X$ denote all vertices in $WQ$ that do not belong to any cycle in $D^*$ and recolor all arcs incident with vertices
		in $X$ by the color 1. Clearly this new coloring is still good as we have not recolored any arc that belongs to a cycle.
		Also for any $x \in WQ$ either $x \in X$ which implies that  $A^+(x)$ and  $A^-(x)$ are both monochromatic (with color 1) or
		$x \not\in X$, which implies that $d_{D^*}^+(x)=d_{D^*}^-(x)=1$,
        which again implies that  $A^+(x)$ and  $A^-(x)$ are both
        monochromatic (as both are sets of size one). Therefore, the new coloring is special. 
        \end{proof}

        {\bf Claim~F:} \MAYZR{Let $R=r_1r_2r_3 \cdots r_l$ be a path in
            $D^*$ with $r_1 \not\in WQ$ and $r_l \not\in WQ$ and let $c$ be any
            special $5$-arc-coloring of $D^*$ . If $d^+(r_i)=d^-(r_i)=1$ for all
        $i=2,3,\ldots, l-1$, then} no matter how we permute the colors on $R$ we
        will still have a special $5$-arc-coloring of $D^*$. Furthermore, if any
        color appears more than once on $R$ we can take one of the arcs with this
        color and recolor it arbitrarily and still have a special $5$-arc-coloring
        of $D^*$.
		
		%\2
		
		%{\bf Proof of Claim~F:}  
        \begin{proof}[Proof of Claim F]
            \renewcommand{\qedsymbol}{$\diamond$}
        Clearly any cycle in $D^*$ containing any of the recolored arcs contain all recolored arcs and therefore contain arcs of all possible colors. Therefore the coloring remains good. Furthermore, as $r_1 \not\in WQ$ and $r_l \not\in WQ$, we note that the coloring remains special.
		
		Assume that some color appears more than once on the arcs of $P$ and that we recolor one of these arcs in $P$, then every cycle in $D^*$ will still contain
		arcs of all colors, so we still have a special $5$-arc-coloring of $D^*$.
        \end{proof}

		%\2

		{\bf Claim~G:} $w_1$ and $w_2$ are non-adjacent, $q_1$ and $q_2$ are non-adjacent, $c(A^-(w_1)) \not= c(A^-(w_2))$ and \YZ{$c(A^+(q_1)) \not= c(A^+(q_2))$} for 
		all special $5$-arc-colorings, $c$, of $D^*$.
		
		%\2
		
		%{\bf Proof of Claim~G:} 

        \begin{proof}[Proof of Claim G]
            \renewcommand{\qedsymbol}{$\diamond$}
        \MAYZR{We first show that if $w_1$ and $w_2$ are adjacent, or if $c(A^-(w_1)) = c(A^-(w_2))$, then there exists a special $5$-arc-coloring $c$ of $D^*$ such that every cycle of $D$ containing $p_1$ uses the color $c(A^-(w_1))$ (even though the arcs in $A(D) \setminus A(D^*)$ remain uncolored).} First assume that $w_1$ and $w_2$ are adjacent and without loss of generality assume that $w_1 w_2 \in A(D)$.
		As $d_D^-(w_1)>0$ we note that $A^-(w_1)=\{z_1w_1\}$ for some $z_1 \in V(D^*)$. 
		If $d_D^-(w_2)=2$ then $w_2$ has out-degree zero in $D^*$ so we may recolor the arcs into $w_2$ with color $c(A^-(w_1))$ and still have a 
		special coloring as the arcs into $w_2$ do not belong to 
		$A^+(q_1)$ or $A^+(q_2)$ (as girth is at least 5). If $d_D^-(w_2) \not=2$ we do not recolor any arcs. Now we note that all
		cycles in $D$ that contain $p_1$ will use the color $c(A^-(w_1))$ (even if the arcs in $A(D) \setminus A(D^*)$ are not colored).
		
		Now if $c(A^-(w_1)) = c(A^-(w_2))$ we also note that all
		cycles in $D$ that contain $p_1$ will use the color $c(A^-(w_1))$.

        \begin{figure}[h]
            \centering
            \begin{subfigure}{0.49\textwidth}
                \centering
            \begin{tikzpicture}[node distance=1.5cm]
                \tikzset{gone/.style={opacity=0.3}}
                \node[vertexB, gone] (p1) {$p_1$};
                \node[vertexB, gone] (p2) [right of = p1] {$p_2$};

                \node[vertexB] (w1) [above left of=p1] {$w_1$};
                \node[vertexB] (w2) [below left of=p1] {$w_2$};

                \node[vertexB] (q1) [above right of=p2] {$q_1$};
                \node[vertexB] (q2) [below right of=p2] {$q_2$};

                \draw[arc,gone] (w1) -- (p1);
                \draw[arc,gone] (w2) -- (p1);
                \draw[arc,gone] (p1) -- (p2);
                \draw[arc,gone] (p2) -- (q1);
                \draw[arc,gone] (p2) -- (q2);

                \draw[arc] (w1) -- (w2);

                \node[vertexB] (z1) [left of=w1] {$z_1$};
                \draw[arc] (z1) -- node [above,scale=0.52] {$c_1$} (w1);

                \draw[arc,<-,dotted] (w2) -- node [above,scale=0.52] {recolor to $c_1$} +(-1.5,0);
            \end{tikzpicture}
            \subcaption{$w_1w_2 \in A(D)$.}
            \end{subfigure}
            \begin{subfigure}{0.49\textwidth}
                \centering
            \begin{tikzpicture}[node distance=1.5cm]
                \tikzset{gone/.style={opacity=0.3}}
                \node[vertexB, gone] (p1) {$p_1$};
                \node[vertexB, gone] (p2) [right of = p1] {$p_2$};

                \node[vertexB] (w1) [above left of=p1] {$w_1$};
                \node[vertexB] (w2) [below left of=p1] {$w_2$};

                \node[vertexB] (q1) [above right of=p2] {$q_1$};
                \node[vertexB] (q2) [below right of=p2] {$q_2$};

                \draw[arc,gone] (w1) -- (p1);
                \draw[arc,gone] (w2) -- (p1);
                \draw[arc,gone] (p1) -- (p2);
                \draw[arc,gone] (p2) -- (q1);
                \draw[arc,gone] (p2) -- (q2);

                \node[vertexB] (z1) [left of=w1] {$z_1$};
                \draw[arc] (z1) -- node [above,scale=0.52] {$c_1$} (w1);

                \draw[arc,<-,dotted] (w1) -- node [below, scale=0.52] {$c_1$} +(-0.6,-0.4);

                \draw[arc,<-,dotted] (w2) -- node [above, scale=0.52] {$c_1$} +(-0.6,0.3);
                \draw[arc,<-,dotted] (w2) -- node [below, scale=0.52] {$c_1$} +(-0.6,-0.3);
            \end{tikzpicture}
            \subcaption{$\exists$ special $c$ with $c(A^-(w_1)) =
            c(A^-(w_2))$.}
            \end{subfigure}
            \begin{subfigure}{\textwidth}
                \centering
            \begin{tikzpicture}[node distance=1.5cm]
                \tikzset{gone/.style={opacity=0.3}}
                \node[vertexB] (p1) {$p_1$};
                \node[vertexB] (p2) [right of = p1] {$p_2$};

                \node[vertexB] (w1) [above left of=p1] {$w_1$};
                \node[vertexB] (w2) [below left of=p1] {$w_2$};

                \node[vertexB] (q1) [above right of=p2] {$q_1$};
                \node[vertexB] (q2) [below right of=p2] {$q_2$};

                \draw[arc] (w1) -- node [above,scale=0.52] {$c_2$} (p1);
                \draw[arc] (w2) -- node [below,scale=0.52] {$c_2$} (p1);
                \draw[arc] (p1) -- node [above,scale=0.52] {$c_3$} (p2);
                \draw[arc] (p2) -- (q1);
                \draw[arc] (p2) -- (q2);

                \node[scale=0.52] (label) [right=0.1cm of p2] {$c_4$ and $c_5$};

                \node[vertexB] (z1) [left of=w1] {$z_1$};
                \draw[arc] (z1) -- node [above,scale=0.52] {$c_1$} (w1);

                \draw[arc,dotted] (w1) -- (w2);
                \draw[arc,<-,dotted] (w1) -- node [below, scale=0.52] {$c_1$} +(-0.6,-0.4);

                \draw[arc,<-,dotted] (w2) -- node [above, scale=0.52] {$c_1$} +(-0.6,0.3);
                \draw[arc,<-,dotted] (w2) -- node [below, scale=0.52] {$c_1$} +(-0.6,-0.3);

                \draw[arc,dotted] (q1) -- +(0.6,0.3);
                \draw[arc,dotted] (q1) -- +(0.6,-0.3);

                \draw[arc,dotted] (q2) -- +(0.6,0.3);
                \draw[arc,dotted] (q2) -- +(0.6,-0.3);
            \end{tikzpicture}
            \subcaption{In both cases, we obtain a good coloring of $D$ with the indicated structure.}
            \end{subfigure}
            \caption{\MAn{The coloring obtained in Claim G when (a)
            $w_1$ and $w_2$ are adjacent or (b) there exists a special
            coloring $c$ with $c(A^-(w_1)) = c(A^-(w_2))$.}}

            \label{}
        \end{figure}

		So in the above cases let $c_1 = c(A^-(w_1))$ and note that  that all cycles in $D$ that contain $p_1$ will use the color $c_1$. As $D$ is an orgraph, without loss of generality we may assume that $q_1q_2\notin A(D^*)$ \MAYZR{(otherwise, we could exchange the indices of $q_1$ and $q_2$)}. 
		
		We now consider the case when $c(A^+(q_1))=c_1$. 
		
		If $d_{D^*}^+(q_1)=\YZ{d_{D^*}^-(q_1)}=1$, we can swap the colors of the arc entering and leaving
		$q_1$ in $D^*$. It is not difficult to see that the resulting coloring is special (even though some arc entering $q_1$ may come from $w_1$ or $w_2$ \YZ{as $d^+_{D^*}(w_1)$ and $d^+_{D^*}(w_2)$ are at most 1}).
		Furthermore, if the color of the arc entering and leaving $q_1$ are both $c_1$, then we can just recolor the arc leaving $q_1$ with a color different from $c_1$ and still have a special coloring. %So in all cases $c(A^+(q_1)) \not=c_1$. 
		
		If we do not have $d_{D^*}^+(q_1)=\YZ{d_{D^*}^-(q_1)=1}$, then \YZ{$d_{D^*}^-(q_1)=0$ and therefore} we can just recolor all arcs out of $q_1$ with a color different from $c_1$ and still have a special coloring.  
		So in all cases we can obtain a special coloring with $c(A^+(q_1)) \not=c_1$.
		
		\MAYZR{Now we can also recolor the arcs so that $c(A^+(q_2)) \neq c_1$, using exactly the same operation. More specifically,
			\begin{itemize}
				\item If $d_{D^*}^+(q_2) = d_{D^*}^-(q_2) = 1$, we simply swap the colors of the arc entering $q_2$ and the arc leaving $q_2$. And if $q_2$ is in $D^*$ and the colors of both the entering and the leaving arcs are $c_1$, then we can recolor only the arc leaving $q_2$.
				
				\item If $d_{D^*}^-(q_2) \neq 1$ (and therefore $d_{D^*}^-(q_2) = 0$), then we may recolor all arcs leaving $q_2$ with a color different from $c_1$.
			\end{itemize}
			
			Since $q_1q_2 \notin A(D^*)$ and the directed girth of $D$ is at least $5$, the recoloring does not affect the colors of the arcs in 
			$c(A^+(q_1))$, $c(A^-(w_1))$, or $c(A^-(w_2))$.} We now consider the following two cases.
		
		If $c(A^+(q_1)) = c(A^+(q_2))$ then let $c_5=c(A^+(q_1)) = c(A^+(q_2))$ and let $\{c_1,c_2,c_3,c_4,c_5\}=\{1,2,3,4,5\}$ and color the arcs
		$w_1 p_1$ and $w_2p_1$ with color $c_2$, color $p_1 p_2$ with color
        $c_3$ and color the arcs $p_2 q_1$ and $p_2 q_2$ \MA{with} color $c_4$.
		We then obtain a good $5$-coloring of $D$, a contradiction.
		
		If $c(A^+(q_1)) \not= c(A^+(q_2))$, then let  $c_4=c(A^+(q_1))$ and $c_5 = c(A^+(q_2))$ and let $\{c_1,c_2,c_3,c_4,c_5\}=\{1,2,3,4,5\}$ and color the arcs
		$w_1 p_1$ and $w_2p_1$ with color $c_2$, color $p_1 p_2$ with color
        $c_3$, color the arcs $p_2 q_1$ with color $c_5$ and color $p_2
        q_2$ \MAn{with} color $c_4$. We then obtain a good $5$-coloring of $D$, a contradiction. 
		
        This completes the proof when $w_1$ and $w_2$ are adjacent or there
        exists a special coloring $c$ with $c(A^-(w_1)) = c(A^-(w_2))$.
        The only remaining case is when $q_1$ and $q_2$ are adjacent \MAn{or
            there exists a special coloring $c$ with $c(A^+(q_1)) =
        c(A^+(q_2))$,} which can be proved analogously. This completes
            the proof of Claim~G.
        \end{proof}
		
		%\2
		
        {\bf Claim~H:} The \MA{sets} $A^-(w_1)$, $A^-(w_2)$, $A^-(q_1)$,
        $A^-(q_2)$, $A^+(w_1)$, $A^+(w_2)$, $A^+(q_1)$, $A^+(q_2)$ are
        \MA{pairwise disjoint}.
		
		%\2
		
		%{\bf Proof of Claim~H:}
        \begin{proof}[Proof of Claim H]
            \renewcommand{\qedsymbol}{$\diamond$}
        It suffices to show that $WQ$ is an independent set of 4 vertices. By Claim~G we note that $w_1$ and $w_2$ are non-adjacent and
		$q_1$ and $q_2$ are non-adjacent.  For the sake of contradiction assume that $w_1$ and $q_1$ are adjacent. 
        As the girth is at least 5 we note that this implies that $w_1 q_1 \in A(D)$.  \YZ{By Claim A} there must therefore exist vertices $s_1$ and $s_2$ such that 
		$s_1 w_1 q_1 s_2$ is a path in $D^*$. Furthermore as the girth is at least 5 and $\{w_1,w_2\}$ and $\{q_1,q_2\}$ are independent sets we note that 
		$s_1 \not\in WQ$ and $s_2 \not\in WQ$. Let $c_1$, $c_2$ and $c_3$ be the colors of $s_1 w_1$, \YZ{$w_1 q_1$} and \YZ{$q_1 s_2$}, respectively. 
		We may assume that $c_1$, $c_2$ and $c_3$ are distinct colors by Claim~F.
		
		Let $c_4= c(A^-(w_2))$ and note that $c_4$ is distinct from $c_1$, $c_2$ and $c_3$ \YZ{(otherwise by Claim~F we can swap $c(s_1w_1)$, $c(w_1q_1)$ and $c(q_1s_2)$ to obtain a special coloring with $c(A^-(w_1)) =c(s_1w_1)= c(A^-(w_2))$ which contradicts Claim~G)}. 
		If  $c_4= c(A^+(q_2))$, then we will swap the color of the arcs in $A^+(q_2)$ and $A^-(q_2)$ (if they are  both non-empty) or recolor $A^+(q_2)$ 
		(if $A^-(q_2)$ is empty or if $c(A^+(q_2))=c(A^-(q_2))$), such that $c$ remains special and $c_4 \not= c(A^+(q_2))$. 
		Let $c_5= c(A^+(q_2))$ and note that
		$c_1,c_2,c_3,c_4,c_5$ are five distinct colors \YZ{as otherwise by Claim~F we can swap $c(s_1w_1)$, $c(w_1q_1)$ and $c(q_1s_2)$ to obtain a special coloring with $c(A^+(q_1))=c(q_1s_2) = c(A^+(q_2))$ which contradicts Claim~G}. 
		Color the arc $w_1 p_1$ with color $c_4$,
		color \YZ{$w_2 p_1$} with color $c_1$,
		color $p_1 p_2$ with color $c_2$,
		color $p_2 q_1$ with color $c_5$ and 
		color $p_2 q_2$ with color $c_3$.
		We then obtain a good $5$-coloring of $D$, a contradiction.
		
        So $w_1$ and $q_1$ are non-adjacent. We can analogously show that
        $w_i$ and $q_j$ are non-adjacent for all $i \in [2]$ and $j \in
        [2]$, which completes the proof of Claim~H.		
        \end{proof}

		{\bf Claim~I:} If we swap the colors of the arcs in $A^-(x)$ and $A^+(x)$ for any $x \in WQ$, in a special $5$-arc-coloring of $D^*$, then we still have a  special $5$-arc-coloring of $D^*$. Furthermore there exists a special $5$-arc-coloring of $D^*$ such that 
		$c(A^-(x)) \not= c(A^+(x))$ (when they are both defined) for all $x \in WQ$.
		
		\2
		
		\begin{proof}[Proof of Claim I]    \renewcommand{\qedsymbol}{$\diamond$}
			\YZ{By Claim~H, for every $x,y\in WQ$ such
        that $x\neq y$, changing colors of $A^-(x)$ and $A^+(x)$ does not
    affect the colors of $A^-(y)$ and $A^+(y)$.} The fact that $c(A^-(x)) \not= c(A^+(x))$ (when they are both defined) for all $x \in WQ$ now follows from 
		Claim~F (considering the path $R$ of length two with $x$ as a central vertex).
		And the fact that when we swap the colors of the arcs in $A^-(x)$ and $A^+(x)$ for any $x \in WQ$ we still have a special coloring also
		follows from Claim~F.
			\end{proof}
		\2
		
		{\bf Claim~J:} There exists a special $5$-arc-coloring, $c$, of $D^*$ such that  the following are distinct colors (when defined),
		
		\[
		c(A^-(w_1)), c(A^-(w_2)), c(A^+(w_1)), c(A^+(w_2))
		\]
		
		and the following are distinct colors (when defined),
		
		\[
		c(A^-(q_1)), c(A^-(q_2)), c(A^+(q_1)), c(A^+(q_2))
		\]
		
		Furthermore, $c$ can be chosen such that $c(A^-(w_1)), c(A^-(w_2)), c(A^+(q_1)), c(A^+(q_2))$ are four distinct colors.
		
        %\2
		
		%{\bf Proof of Claim~J:} 
        \begin{proof}[Proof of Claim J]
            \renewcommand{\qedsymbol}{$\diamond$}
       Assume, for the sake of contradiction, that 
		$c(A^-(w_1))$, $c(A^-(w_2))$, $c(A^+(w_1))$ and $c(A^+(w_2))$ are not distinct colors. By Claim~I we can recolor 
		arcs such that $c(A^-(w_1)) = c(A^-(w_2))$ contradicting Claim~G. Therefore 
		$c(A^-(w_1))$, $c(A^-(w_2))$, $c(A^+(w_1))$ and  $c(A^+(w_2))$ are distinct colors (when defined) and analogously we can show that
		$c(A^-(q_1))$, $c(A^-(q_2))$, $c(A^+(q_1))$ and  $c(A^+(q_2))$ are distinct colors (when defined).
		
		For the sake of contradiction assume that $c(A^-(w_1)), c(A^-(w_2)), c(A^+(q_1)), c(A^+(q_2))$ are not four distinct colors,
		which implies that $c(A^-(w_i))=c(A^+(q_j))$ for some $i \in [2]$ and $j \in [2]$. Without loss of generality assume that $i=j=1$.
		If $c(A^-(q_1))$ is defined then, by Claim~I, swap the colors of $A^+(q_1)$ and $A^-(q_1)$ and if $A^-(q_1)$ is empty then recolor
		$A^+(q_j)$ with a color different from $c(A^-(w_1))$. Now $c(A^-(w_1)) \not= c(A^+(q_1))$.
		
		If $c(A^-(w_2))=c(A^+(q_1))$, then consider the following cases. If $c(A^+(w_2))$ is defined then, by Claim~I, swap the colors of $A^+(w_2)$ and $A^-(w_2)$ and if $A^+(w_2)$ is empty then recolor $A^-(w_2)$ with a color different from $c(A^+(q_2))$, $c(A^-(w_1))$ \MAYZR{and $c(A^+(w_1))$}. 
		Now $c(A^-(w_2)) \not= c(A^+(q_1))$ and $c(A^-(w_2)) \not= c(A^-(w_1))$, \MAYZR{and $c(A^-(w_1))$, $c(A^-(w_2))$ and $c(A^+(w_1))$ remain distinct (when defined)}.
		
		If $c(A^-(w_2))=c(A^+(q_2))$, then consider the following cases. If $c(A^-(q_2))$ is defined then, by Claim~I, swap the colors of $A^+(q_2)$ and $A^-(q_2)$ and if $A^-(q_2)$ is empty then recolor $A^+(q_2)$ with a color different from $c(A^+(q_1))$, $c(A^-(w_1))$, $c(A^-(w_2))$ \MAYZR{and $c(A^-(q_1))$}. 
		Now $c(A^+(q_2))$ is different from $c(A^+(q_1))$, $c(A^-(w_1))$ and $c(A^-(w_2))$,  \MAYZR{and $c(A^+(q_1))$, $c(A^+(q_2))$ and $c(A^-(q_1))$ remain distinct (when defined)}.
		This completes the proof of Claim~J.
        \end{proof}
		
		%\2
		
		%============XXXXXXXX
		
		We now return to the proof of the theorem again. Let $c$ be a special $5$-arc-coloring of $D^*$ such that the properties of Claim~J hold.
		
		Now color $w_1 p_1$ with color $c(A^-(w_2))$ and color $w_2 p_1$ with color \YZ{$c(A^-(w_1))$} and
		color $p_2 q_1$ with color $c(A^+(q_2))$  and color $p_2 q_2$ with color $c(A^+(q_1))$.
		Finally color $p_1 p_2$ with the color in $\{1,2,3,4,5\} \setminus \{c(A^-(w_1)), c(A^-(w_2)), c(A^+(q_1)), c(A^+(q_2))\}$.
		Now $c$ becomes a good coloring of $D$, contradicting 
		the fact that $D$ does not have such a coloring.
	\end{proof}
\MAYZR{In Section \ref{sec:conclusion}, Problem \ref{prob:336} will ask whether $\fasd(3,6) = 5$ or 6.  %We provide some support for $\fasd(3,6) = 6.$
}	
	\MAYZR{Note that if $\fasd(3,6) = 6$, then 
	$\fas(3,6) = \tfrac{1}{6}$. To provide some evidence for the possibility of $\fasd(3,6) = 6,$
	we conclude this section by proving that indeed $\fas(3,6) = \tfrac{1}{6}$, 
	using a recent result from~\cite{ALGYZ} on the size of minimum feedback 
	vertex sets.
	
	Recall that a \emph{minimum feedback vertex set} of a digraph $D$ is a 
	smallest subset $F \subseteq V(D)$ such that $D - F$ is acyclic. 
	Let $\mathcal{C}_o$ denote the class of digraphs obtained from an undirected odd cycle by replacing each edge with a directed 2-cycle. 
	\begin{thm}\cite{ALGYZ} \label{thm:fvs4} 
		Let $D$ be a connected digraph with $\Delta(D) \le 4$. Then minimum feedback vertex sets of $D$ have size at most $\frac{|V(D)|}{2}$ unless $D \in \mathcal{C}_o$.
	\end{thm}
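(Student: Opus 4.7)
The plan is to prove this by induction on $|V(D)|$. The crucial auxiliary object is the \emph{digon graph} $H$ on $V(D)$ whose edges are the unordered pairs $\{u,v\}$ with both $uv, vu \in A(D)$. Because $\Delta(D) \le 4$ and each digon contributes one in-arc and one out-arc at each endpoint, every vertex of $H$ has degree at most $2$, so every component of $H$ is a path or a cycle. Since any feedback vertex set of $D$ must be a vertex cover of $H$, it is natural to start from an economical vertex cover of $H$ and then augment it to also kill the longer cycles of $D$ that survive outside $H$.

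The exceptional case is clean to describe: if $H$ is a single odd cycle spanning $V(D)$, then the degree bound together with connectedness forces $D$ to be exactly the member of $\mathcal{C}_o$ obtained from that odd cycle, and the minimum vertex cover of an odd $(2k+1)$-cycle has size $k+1 > (2k+1)/2$, so the exclusion in the statement is necessary. Otherwise I would split into cases according to the structure of $H$. If some component $K$ of $H$ is a path or an even cycle, take a minimum vertex cover of $K$ of size exactly $\lfloor |V(K)|/2 \rfloor$, delete those vertices, and apply the induction hypothesis to each connected component of the resulting digraph (which still satisfies $\Delta \le 4$). If every component of $H$ is an odd cycle but $H$ does not span $V(D)$, pick a vertex $v \notin V(H)$; since $v$ is incident with no digon and $d_D(v) \le 4$, delete $v$, induct on $D-v$, and add $v$ only if needed.

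The main obstacle is the tight sub-case where $H$ is a disjoint union of several odd cycles covering all of $V(D)$: each odd component forces its vertex cover to be one above half, creating a deficit of $1/2$ per odd component that must be absorbed globally. Here I would exploit the connectedness of $D$ to locate an arc of $D$ outside $H$ that bridges two distinct odd-cycle components of $H$, and use its two endpoints to simultaneously cover both components with the combined minimum, turning two separate $\lceil k/2 \rceil$ bounds into a single saving. To make this iterate cleanly, the induction hypothesis probably has to be strengthened to track the residual digon structure, so that a reduction cannot accidentally land in $\mathcal{C}_o$ or in a smaller instance with worse ratio; ruling out these degenerate reductions is where most of the technical care will live, and is precisely what makes the stated bound $|V(D)|/2$ sharp.
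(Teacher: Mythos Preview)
First, note that the paper does not prove this theorem at all: it is quoted from~\cite{ALGYZ} and used as a black box (via Corollary~\ref{cor1}) inside the proof of Theorem~\ref{thm: fas3-6}. So there is no ``paper's own proof'' to compare against; I can only assess your sketch on its own merits.

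There is a basic arithmetic problem that runs through every inductive step you describe. In your path/even-cycle case you delete a set $S$ with $|S|=\lfloor |V(K)|/2\rfloor$, put all of $S$ into the feedback vertex set, and induct on $D-S$. Induction gives $\fvs(D-S)\le |V(D-S)|/2=(n-|S|)/2$, so the total bound you obtain is
\[
|S|+\frac{n-|S|}{2}=\frac{n+|S|}{2}>\frac{n}{2}
\]
whenever $|S|\ge 1$. The same over-count occurs in your ``delete a single $v\notin V(H)$ and induct'' step: you get at best $(n-1)/2+1>n/2$. To maintain the ratio $n/2$ you must, at every reduction, remove at least \emph{two} vertices from the instance for every \emph{one} vertex you commit to the feedback vertex set; none of your reductions do this.

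Your structural analysis of $H$ also misidentifies the hard case. If a component of $H$ is a cycle $C$, then each $v\in C$ lies in two digons, which already accounts for in-degree $2$ and out-degree $2$; since $\Delta(D)\le 4$ this exhausts all arcs at $v$, so $V(C)$ is a connected component of $D$. By connectedness of $D$ this forces $V(D)=V(C)$. Hence $H$ can have at most one cycle component, and if it has one it spans $V(D)$: the scenario ``$H$ is a disjoint union of several odd cycles covering all of $V(D)$'' that you single out as the main obstacle is vacuous, and the genuinely hard case is when every component of $H$ is a path (possibly trivial), i.e.\ when digons are sparse and most cycles of $D$ are long. That is exactly the regime your sketch does not address.
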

	
	The result can be easily extended to directed multigraphs, since adding multiple arcs does not affect the size of minimum feedback vertex sets. Hence, the following corollary holds.
	
	\begin{cor}\label{cor1}
		Let $D$ be a connected directed multigraph with $\Delta(D) \le 4$. Then minimum feedback vertex sets of $D$ have size at most $\frac{|V(D)|}{2}$ unless $D \in \mathcal{C}_o$.
	\end{cor}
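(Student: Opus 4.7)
The plan is to reduce the multigraph case directly to the simple digraph case covered by Theorem~\ref{thm:fvs4}. Given a connected directed multigraph $D$ with $\Delta(D)\le 4$, I would form its underlying simple digraph $D'$ by, for every ordered pair $(u,v)$ with at least one arc from $u$ to $v$ in $D$, keeping exactly one copy of that arc and discarding the rest. The resulting $D'$ is a simple digraph (at most one arc in each direction between any two vertices, though directed $2$-cycles are allowed), and it clearly satisfies $\Delta(D')\le \Delta(D)\le 4$.

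The first key observation is that $D$ and $D'$ have exactly the same feedback vertex sets, so in particular their minimum feedback vertex sets have the same size. Indeed, every directed cycle of $D$ visits a sequence of vertices that is also traced by a directed cycle of $D'$ (using the single retained arc in each step), and conversely every directed cycle of $D'$ is already a directed cycle of $D$. Hence a vertex set hits every cycle of $D$ if and only if it hits every cycle of $D'$. Similarly, $D'$ has the same underlying undirected graph as $D$ (ignoring edge multiplicities), so $D'$ is connected because $D$ is.

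Now apply Theorem~\ref{thm:fvs4} to $D'$: either the minimum feedback vertex set of $D'$ has size at most $|V(D')|/2 = |V(D)|/2$, in which case the same bound holds for $D$ by the preceding observation, or $D' \in \mathcal{C}_o$. The only remaining step is to verify that in the second case $D$ itself lies in $\mathcal{C}_o$. The crucial point is that every member of $\mathcal{C}_o$ is obtained by replacing each edge of an odd cycle by a directed $2$-cycle, so every vertex has in-degree $2$ and out-degree $2$, giving total degree exactly $4$. Since $D$ has maximum degree at most $4$, and $D$ is obtained from $D'$ only by duplicating existing arcs (which can only increase vertex degrees), no duplication can actually occur without exceeding the degree bound. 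Therefore $D = D' \in \mathcal{C}_o$, completing the argument.

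I expect no genuinely difficult step here; the only delicate part is spelling out the degree count that forces $D = D'$ in the exceptional case, which is essential because $\mathcal{C}_o$ is a class of simple digraphs and one could a priori worry that a multigraph reducing to $\mathcal{C}_o$ might not itself be in $\mathcal{C}_o$.
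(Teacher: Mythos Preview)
Your proposal is correct and follows essentially the same approach as the paper: the paper's entire proof is the one-line remark that ``adding multiple arcs does not affect the size of minimum feedback vertex sets,'' which is exactly your reduction to the underlying simple digraph $D'$. Your treatment is in fact more careful than the paper's, since you explicitly verify via the degree count that $D'\in\mathcal{C}_o$ forces $D=D'$, a point the paper leaves implicit.
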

	
	We call a directed multigraph $D$ \emph{degree-$k$} if the underlying graph $U(D)$ of it, obtained from replacing each arc by an edge, is $k$-regular. We now ready to show $\fas(3,6)=\frac{1}{6}$. }
	
	\begin{thm}\label{thm: fas3-6}
		$\fas(3,6)=\frac{1}{6}$.
	\end{thm}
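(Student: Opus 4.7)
My plan for the proof proceeds in two parts. For the lower bound, the directed $6$-cycle $C_6\in \D_{3,6}$ satisfies $\fas(C_6)/|A(C_6)|=1/6$, so $\fas(3,6)\ge 1/6$.

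For the upper bound I want to show $\fas(D)\le |A(D)|/6$ for every $D\in \D_{3,6}$. First I would reduce to the strongly connected case, since the inequality is additive over strongly connected components. Under this assumption, the analogues of Claims~A and~B in the proof of Theorem~\ref{thm:girthIVdegIII} give $V(D)=X_{11}\cup X_{12}\cup X_{21}$ with $|X_{12}|=|X_{21}|=:k$. Set $V_3:=X_{12}\cup X_{21}$ and form a directed multigraph $D'$ by contracting each maximal path of $D$ whose internal vertices all lie in $X_{11}$ into a single arc; the endpoints of any such path lie in $V_3$, so $V(D')=V_3$ has $2k$ vertices, and every vertex of $D'$ has total degree exactly $3$ (contraction preserves degree at the endpoints). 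Hence $D'$ is a connected degree-$3$ multidigraph with $\Delta(D')\le 3\le 4$, and $D'\notin \mathcal{C}_o$ since every vertex of a graph in $\mathcal{C}_o$ has degree $4$. Applying Corollary~\ref{cor1} to $D'$ yields $\fvs(D')\le |V(D')|/2=k$, and since $\min(d^+(v),d^-(v))\le 1$ at every vertex of $D'$, I convert this into $\fas(D')\le k$ by picking the unique minority-side arc at each vertex of an optimal FVS.

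To translate back, I would use the observation that cycles of $D$ passing through $V_3$ correspond bijectively to cycles of $D'$, while each pure $X_{11}$-cycle component of $D$ contributes $1$ to $\fas(D)$ and at least $6$ to $|A(D)|$ by the girth condition, giving ratio at most $1/6$ on those components. Combining this with the identity $|A(D)|=|X_{11}|+3k$ and the structural consequence that any short cycle of length $\ell<6$ in $D'$ must be padded by at least $6-\ell$ vertices of $X_{11}$ in its lift to $D$, I would aim to amortize $\fas(D)\le \fas(D')+p$ (where $p$ counts pure-cycle components) against $|A(D)|/6$.

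The main obstacle I expect is the near-$3$-regular regime in which $|X_{11}|$ is small compared to $k$, and in particular the $3$-regular case $X_{11}=\emptyset$, where the naive bound $\fas(D)\le k=|A(D)|/3$ is a factor of two weaker than the target $|A(D)|/6=k/2$. Closing this gap will require either a sharpening of Corollary~\ref{cor1} specific to degree-$3$ multidigraphs of girth $\ge 6$ (plausibly supplied by \cite{ALGYZ}), or a direct structural argument on $D'$ that exploits the absence of short directed cycles to produce an FAS of $D'$ of size at most $|V(D')|/4$.
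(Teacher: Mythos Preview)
Your proposal correctly identifies the main obstacle but does not close it, and the gap is exactly where the substance of the proof lies. Applying Corollary~\ref{cor1} directly to the degree-$3$ contraction $D'$ only yields $\fvs(D')\le k$ and hence $\fas(D)\le k$, which is off by a factor of~$2$ from the target $k/2$ when $X_{11}=\emptyset$. Your hope of a ``girth $\ge 6$'' sharpening of Corollary~\ref{cor1} does not apply here either: the contracted multigraph $D'$ can have directed girth as small as~$2$ (a path in $X_{11}$ together with a single return arc produces a digon in $D'$), so no large-girth hypothesis on $D'$ is available.

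The paper closes this factor-of-two gap by a \emph{second} contraction. Working with a minimal counterexample, one first proves (Claims~B--E) that in $D'$ the sets $X^-$ and $X^+$ are independent and that every contracted $X_{11}$-path runs from $X^+$ to $X^-$; consequently $D'$ is bipartite with parts $(X^-,X^+)$, and the arcs from $X^-$ to $X^+$ form a \emph{perfect matching}~$M$. Contracting each arc of $M$ produces a degree-$4$ multigraph $D''$ on $|V(D')|/2=k$ vertices, to which Corollary~\ref{cor1} now gives $\fvs(D'')\le k/2$. Since every vertex of $D'$ has its unique minority arc in~$M$, this feedback vertex set lifts to a feedback arc set of $D'$ (and then of $D$) of size at most $k/2=a(D')/6\le a(D)/6$, with a short separate argument for the exceptional case $D''\in\mathcal{C}_o$. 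The structural Claims~B--E are not automatic: each is established by exhibiting, in a purported counterexample, a small configuration whose removal drops at least six arcs while increasing the feedback arc set by only one---this is precisely the minimal-counterexample machinery that your outline does not develop.
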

	\begin{proof}
		One can observe from a directed 6-cycle that $\fas(3,6)\geq
        \frac{1}{6}$. Thus, \MAYZR{it remains to be} shown that, for every orgraph $D$ with $\Delta(D)\leq 3$ and $g(D)\geq 6$, $\fas(D)\leq \frac{a(D)}{6}$. Suppose, for the sake of contradiction, that the result is false, and let $D$ be a counterexample with the minimum number of vertices. We first show that $D$ is strongly connected.
		
		If $D$ is not strongly connected, let $H$ be a strong component of $D$. By the minimality of $D$, both $H$ and $D - V(H)$ have feedback arc sets $F_1$ and $F_2$ with desired size, respectively. Thus, $F_1\cup F_2$ is a feedback arc set of $D$ containing at most $\frac{a(H) + a(D - V(H))}{6} \le \frac{a(D)}{6}$ arcs, a contradiction. Hence, $D$ must be strongly connected.
		
		Therefore, the vertex set $V(D)$ can be partitioned into three subsets:
		\[
		X^+ = \{v \in V(D) : d^+(v) = 2,\, d^-(v) = 1\}, \]
		\[X^- = \{v \in V(D) : d^+(v) = 1,\, d^-(v) = 2\}, \]
		and
		\[X_0 = \{v \in V(D) : d^+(v) = d^-(v) = 1\}.
		\]
		Thus,
		\[
		2|X^+| + |X^-| + |X_0|
		= \sum_{v \in V(D)} d^+(v)
		= \sum_{v \in V(D)} d^-(v)
		= |X^+| + 2|X^-| + |X_0|,
		\]
		which implies that $|X^+| = |X^-|$.
		
		If $|X^+| = |X^-| = 0$, then, since $D$ is strongly connected, it must be a directed cycle of length at least $6$, for which the result clearly holds. Thus, $|X^+| = |X^-| > 0$.
		
		Because $D$ is strongly connected, the subgraph $D[X_0]$ is a union of vertex-disjoint directed paths. Denote these paths by $P_1, P_2, \dots, P_t$, and for each $i \in [t]$, write
		\[
		P_i = v^i_1 v^i_2 \dots v^i_{|V(P_i)|}.
		\]
		Let $y_i$ denote the unique out-neighbour of $v^i_{|V(P_i)|}$ in $X^- \cup X^+$, and let $x_i$ denote the unique in-neighbour of $v^i_1$ in $X^- \cup X^+$. The following claim now holds.
		
		\2
		
		{\bf Claim A.} For every $i\in [t]$, $x_i\neq y_i$. 
		\begin{proof}[Proof of Claim A]
			\renewcommand{\qedsymbol}{$\diamond$}
			If $x_i = y_i$, then $D[V(P_i) \cup \{x_i\}]$ forms a directed cycle having only one out-neighbour or only one in-neighbour in $V(D) \setminus (V(P_i) \cup \{x_i\})$ (i.e. the in- or out-neighbour of $x_i\in V(D)\setminus V(P_i)$), contradicting the fact that $D$ is strongly connected. This completes the proof.
		\end{proof}
		
		\2
		{\bf Claim B. }If $x_i\in X^-$ or $y_i\in X^+$, then $|V(P_i)|\leq 2$. 
		\begin{proof}[Proof of Claim B]
			We prove the result for the case $x_i \in X^-$, as the argument for the other case is analogous. Suppose, for the sake of contradiction, that $|V(P_i)| \ge 3$. By the minimality of $D$, let $F$ be a feedback arc set of $D' = D - V(P_i) - \{x_i\}$ with $|F| \le \frac{a(D')}{6}$. Since $|V(P_i)| \ge 3$, we have $a(D') \le a(D) - 6$. 
			
			Observe that every directed cycle in $D$ containing vertices in $V(P_i)\cup \{x_i\}$ must also contain the arc $x_iv^i_1$ (as $x_i\in X^-$ and $x_i$ is the only vertex in $V(P_i)\cup \{x_i\}$ which has in-neighbours in $V(D')$). Therefore, $F \cup \{x_i v^i_1\}$ is a feedback arc set of $D$ with 
			\[
			|F \cup \{x_i v^i_1\}| \le \frac{a(D) - 6}{6} + 1 = \frac{a(D)}{6},
			\]
			a contradiction.
		\end{proof}
		
		\2
		{\bf Claim C.} For every $i\in[t]$, $x_i\notin X^-$ or $y_i\notin X^+$.
		\begin{proof}[Proof of Claim C]
			Suppose there exists a path $P_i$ such that $x_i \in X^-$ and $y_i \in X^+$. Let
			$
			D' = D - V(P_i) - \{x_i, y_i\}.
			$
			From Claim~A, B and the assumption $g(D) \ge 6$, one can observe that
			$
			a(D') = a(D) - 6 \text{ or } a(D) - 7,
			$
			and in particular, $a(D') \le a(D) - 6$.
			
			Let $F$ be a feedback arc set of $D'$ with $|F|\leq \frac{a(D')}{6}$. Then $F \cup \{x_i v^i_1\}$ is a minimum feedback arc set of $D$, since every directed cycle in $D$ containing a vertex from $V(P_i) \cup \{x_i,y_i\}$ must also contain the arc $x_i v^i_1$ (as $x_i\in X^-$ and $x_i$ is the only vertex in $V(P_i) \cup \{x_i, y_i\}$ having in-neighbours in $V(D')$). Moreover, its size satisfies
			\[
			|F \cup \{x_i v^i_1\}| \le \frac{a(D) - 6}{6} + 1 = \frac{a(D)}{6},
			\]
			a contradiction.
		\end{proof}
		
		\2
		{\bf Cliam D.}		$X^+$ and $X^-$ are independent sets of $D$.
		\begin{proof}[Proof of Claim D]
			We only prove that $X^+$ is independent, as the argument for $X^-$ is analogous. Suppose, for the sake of contradiction, that $X^+$ is not independent. Note that $D[X^+]$ is acyclic as if it has a directed cycle $C$ then by the definition of $X^+$ vertices on $C$ has no in-neighbours in $V(D)\setminus V(C)$, which contradicts the fact that $D$ is strongly connected. Now, let 
			$
			P = v_1v_2 \dots v_{|V(P)|}
			$
			be the longest path in $D[X^+]$, so $|V(P)| \ge 2$. Then
            $v_{\MAYZR{1}}$ has an in-neighbour $v' \in X^- \cup X_0$.  
			
			Let $D' = D - \{v_1, v_2, v'\}$. Since $g(D) \ge 6$, we have
			\[
			a(D') \le a(D) - 6,
			\]
			where $a(D') = a(D) - 6$ if $v' \in X_0$ and $a(D') = a(D) - 7$ if $v' \in X^-$.  
			
			Let $F$ be a minimum feedback arc set of $D'$. Then $F \cup \{v'v_1\}$ is a feedback arc set of $D$, since every directed cycle in $D$ containing a vertex from $\{v_1, v_2, v'\}$ must also contain the arc $v'v_1$ (as $v'\in X_0\cup X^-$ and $v'$ is the only vertex in $\{v_1, v_2, v'\}$ having in-neighbours in $V(D')$). Moreover, its size satisfies
			\[
			|F \cup \{v'v_1\}| \le \frac{a(D) - 6}{6} + 1 = \frac{a(D)}{6},
			\]
			a contradiction.
		\end{proof}
		
		\2
		{\bf Claim E.} (i) For every $i\in [t]$, $x_i\in X^+$ and $y_i\in X^-$;  (ii) For every $i\in [t]$, $y_ix_i\notin A(D)$.
		\begin{proof}[Proof of Claim E]
			(i): \noindent
			By Claim~C, it suffices to show that for every $i \in [t]$, we have $\{x_i, y_i\} \not\subseteq X^-$ and $\{x_i, y_i\} \not\subseteq X^+$. We prove only the latter, as the former case is analogous.  
			
			Suppose there exists some $s \in [t]$ such that $\{x_s, y_s\} \subseteq X^+$. Let $x_s^-$ denote the unique in-neighbour of $x_s$. Then, by Claim~D, we have $x_s^- \in X^- \cup X_0$.  
			
			Now, let
			$
			D' = D - V(P_s) - \{x_s, y_s, x_s^-\},
			$
			and let $F$ be a minimum feedback arc set of $D'$. Since $g(D) \ge 6$, one can observe that
			$
			a(D') \le a(D) - 6.
			$
			
			Note that $F \cup \{x_s^- x_s\}$ is a feedback arc set of $D$, since every directed cycle in $D$ containing a vertex from $V(P_s) \cup \{x_s^-, x_s, y_s\}$ must also contain the arc $x_s^-x_s$ (as $x_s^-$ is the only vertex in $V(P_s) \cup \{x_s^-, x_s, y_s\}$ having in-neighbours in $V(D')$ and $x_s$ is the only out-neighbour of $x_s^-$ in $D$ since $x_s^-\in X^-\cup X_0$). Moreover, its size satisfies
			\[
			|F \cup \{x_s^- x_s\}| \le \frac{a(D) - 6}{6} + 1 = \frac{a(D)}{6},
			\]
			a contradiction. This completes the proof of (i).
			
			(ii): Suppose there exists a path $P_i$ such that $y_i x_i \in A(D)$. By Claim E (i), $x_i\in X^+$ and $y_i\in X^-$. Let
			$
			D' = D - V(P_i) - \{x_i, y_i\},
			$
			and let $F$ be a minimum feedback arc set of $D'$. Since $g(D) \ge 6$, one can observe that
			$
			a(D') \le a(D) - 6.
			$
			
			Note that $F \cup \{y_i x_i\}$ is a feedback arc set of $D$, because every directed cycle in $D$ containing a vertex from $V(P_i) \cup \{x_i, y_i\}$ must include the arc $y_i x_i$ (as $x_i$ is the only vertex in $V(P_i) \cup \{x_i, y_i\}$ with out-neighbours in $V(D')$ and $y_i$ is the only in-neighbour of $x_i$ in $D$). Moreover, its size satisfies
			\[
			|F \cup \{y_i x_i\}| \le \frac{a(D) - 6}{6} + 1 = \frac{a(D)}{6},
			\]
			a contradiction. This completes the proof.
		\end{proof}
		
			Now construct a directed multigraph $D'$ from $D$ by removing $X_0$ and adding an arc from $x_i$ to $y_i$ for each $i \in [t]$ (Note that by Claim A, $D'$ has no loops and therefore it is indeed a directed multigraph). Denote by $A'$ the set of these newly added arcs. Clearly, $V(D') = X^- \cup X^+$, $D'$ is degree-$3$, and every vertex in $X^-$ (respectively, $X^+$) has in-degree $2$ (respectively, out-degree $2$) in $D'$.
		
		It therefore suffices to show that $D'$ has a feedback arc set $F'$ with $|F'| \le a(D')/6$. Indeed, from such an $F'$ we can construct a feedback arc set $F$ of $D$ as follows:  
		for each $a \in F'$,  
		\MAYZR{
			\begin{itemize}
				\item if $a \notin A'$, include $a$ in $F$;  
				\item if $a \in A'$, then it corresponds to an arc $(x_i, y_i)$, and we  include the arc $x_i v^i_1$ in $F$ instead.  
			\end{itemize}
		}
		It is straightforward to verify that $F$ is a feedback arc set of $D$, and that
		\[
		|F| = |F'| \le \frac{a(D')}{6} \le \frac{a(D)}{6}.
		\]
		Hence, it suffice to show that $D'$ admits a feedback arc set of size at most $a(D')/6$.
		
		By Claim~D, $D'$ is bipartite with partite sets $(X^-,X^+)$. Thus, as every vertex in $X^-$ has out-degree $1$ in $D'$ and every vertex in $X^+$ has in-degree $1$ in $D'$, all arcs from $X^-$ to $X^+$ form a perfect matching $M$ in $D'$. Moreover, by Claim E (i), $M\cap A'=\emptyset$ and therefore $M$ is a matching in $D$. 
		
		Let $D''$ be the digraph obtained from $D'$ by contracting all arcs in $M$. Note that if $xy\in M$, then by Claim E (i) and (ii) and the fact that $D$ is oriented, $xy$ is the only arc between $x$ and $y$ in $D'$. Thus, $D''$ has no loops and therefore $D''$ is a degree-$4$ directed multigraph. Therefore, by Corollary~\ref{cor1}, it has a feedback vertex set $S$ of size at most%In addition, $|V(D'')|=|M|\geq 2$ as the endpoints of every arc in $M$ has degree three in $D'$. 
		\[
		|S| \le \frac{|V(D'')|}{2} = \frac{|V(D')|}{4}, 
		\]
		unless $D''\in \mathcal{C}_o$. 
		
		If $D'' \notin \mathcal{C}_o$, let $F \subseteq M$ be the set of arcs in $D'$ 
		corresponding to the vertices in $S$. We claim that $F$ is a feedback arc set 
		of $D'$. Indeed, let $C'$ be any cycle in $D'$, and let $C''$ be the 
		corresponding cycle in $D''$. Since $S$ is a feedback vertex set of $D''$, 
		we have $S \cap V(C'') \neq \emptyset$. Assume that $s \in S \cap V(C'')$ and it corresponds
		to an arc $xy \in M$. Then $C'$ must traverse $xy$, because $y$ is the 
		unique out-neighbour of $x$ and $x$ is the unique in-neighbour of $y$ 
		(as $x \in X^{-}$ and $y \in X^{+}$). Therefore, removing $xy$ destroys 
		the cycle $C'$. Since every cycle in $D'$ contains at least one arc from $F$, 
		it follows that $F$ is a feedback arc set of $D'$.
		
		 As $D'$ is degree-$3$, we have $a(D') = \frac{3|V(D')|}{2}$. Therefore,
\[
|F| = |S| = \frac{|V(D')|}{4} = \frac{a(D')}{6},
\]
a contradiction. Thus, we may assume that $D''\in \mathcal{C}_o$. \MAYZR{Note that in this case, $|M|=|V(D'')|=2k+1$ for some $k\geq 1$.}

Let $\mathcal{C}_o'$ denote the set of digraphs obtained from any digraph $H \in \mathcal{C}_o$ by splitting each vertex $v \in V(H)$ into two vertices $v^+$ and $v^-$, adding an arc from $v^-$ to $v^+$, and for each arc $uv \in A(H)$, adding an arc from $u^+$ to $v^-$.
\MAYZR{
\begin{figure}[h]
\tikzset{special/.style = {->, densely dotted, line width=0.03cm}}
\centering
\begin{subfigure}{0.45\textwidth}
\centering
\begin{tikzpicture}[scale=3.9]
\tikzset{original/.style = {->, line width=0.03cm}}
\node[vertexB] (x1) at (0,1) {$v_1^+$};
\node[vertexB] (x2) at (0,0.75) {$v_2^+$};
\node[vertexB] (x3) at (0,0.5) {$v_3^+$};
\node[vertexB] (x4) at (0,0.25) {$v_4^+$};
\node[vertexB] (x5) at (0,0) {$v_5^+$};

\node[vertexB] (y1) at (0.5,1) {$v_1^-$};
\node[vertexB] (y2) at (0.5,0.75) {$v_2^-$};
\node[vertexB] (y3) at (0.5,0.5) {$v_3^-$};
\node[vertexB] (y4) at (0.5,0.25) {$v_4^-$};
\node[vertexB] (y5) at (0.5,0) {$v_5^-$};
\draw [rounded corners=10pt] (-0.2,1.2) rectangle (0.2,-0.2);
\draw [rounded corners=10pt] (0.3,1.2) rectangle (0.7,-0.2);
\draw (0,1.3) node {$X^+$};
\draw (0.5,1.3) node {$X^-$};

% all backwards arcs in slighly opaque gray
\draw [original] (x1) to (y2);
\draw [original] (x2) to (y1);
\draw [original] (x2) to (y3);
\draw [original] (x3) to (y2);
\draw [original] (x3) to (y4);
\draw [original] (x4) to (y3);
\draw [original] (x4) to (y5);
\draw [original] (x5) to (y4);
\draw [original] (x5) to (y1);
\draw [original] (x1) to (y5);

\draw [special] (y1) to (x1);
\draw [special] (y2) to (x2);
\draw [special] (y3) to (x3);
\draw [special] (y4) to (x4);
\draw [special] (y5) to (x5);
\end{tikzpicture}
\subcaption{$D'$.}
\label{fig:c5}
\end{subfigure}
\begin{subfigure}{0.45\textwidth}
\centering
\begin{tikzpicture}[scale=2.1]
\tikzset{curve/.style = {->, bend left=20, line width=0.03cm}}
\foreach[count=\i] \lab in
{ $v_1$, $v_2$, $v_3$, $v_4$,
$v_5$ }
{
\node [vertexB] (v\i) at ({360/5*(\i-1)+18}:1)
{\lab};
}

\foreach \i in {1,...,4} {
\draw [curve] (v\i) to (v\the\numexpr\i+1\relax);
\draw [curve] (v\the\numexpr\i+1\relax) to (v\i);
}

\draw [curve] (v1) to (v5);
\draw [curve] (v5) to (v1);
\node at (0,-1.25) {};
\end{tikzpicture}    
\subcaption{$D''$.}
\label{fig:c5'}
\end{subfigure}
\caption{\MAYZR{$D'$ and $D''$, when $D''$ is obtained from an undirected 5-cycle by replacing each edge with a directed 2-cycle. The dotted arcs indicate the arcs in $M$.}}\label{fig:c5c5'}
\end{figure}
}

Observe that if $D'' \in \mathcal{C}_o$, then $D' \in \mathcal{C}_o'$
\MAYZR{(Fig. \ref{fig:c5c5'} illustrates how we get $D''$ from $D'$ when $D''$ is obtained from an undirected 5-cycle by replacing each edge with a directed 2-cycle)}. By Claim~E (i), $D$ can be obtained from $D'$ by subdividing arcs in $A(D') \setminus M$. Since $g(D) \ge 6$, we need to subdividing at least 2 times for every 4-cycles in $D'$. As  $D''$ has \MAYZR{$|V(D'')|=2k+1$ 2-cycles}, $D'$ has \MAYZR{$2k+1$} 4-cycles and therefore, \MAYZR{as $a(D')=3|M|=6k+3$,}
\MAYZR{ \[
a(D) \ge a(D') + 4k+2= 10k+5>6k+6,
\]
where the last inequality holds as $k\geq 1$.}
         Let $S$ be a minimum feedback vertex set of $D''$. Since $D'' \in \mathcal{C}_o$, we have
\[
|S| = \frac{|V(D'')| + 1}{2} = \MAYZR{k+1}.
\]
Let $M'$ be the set of all arcs in $M$ corresponding to the vertices in
$S$. Then, one can observe that $M'$ forms a feedback arc set of $D$.
Moreover, \MAYZR{as $a(D)>6k+6$,}
\[
|M'| = |S| = \MAYZR{k+1 = \frac{6k + 6}{6} < \frac{a(D)}{6}}.
\]
Hence, $M'$ is a desired feedback arc set of $D$, a contradiction. This completes the proof.	\end{proof}

	\section{Lower bounds on $\fas(\Delta,g)$ and upper bounds on $\fasd(\Delta,g)$}\label{sec:lub}

%	Let $G$ be a graph. For subsets $A,B \subseteq V(G)$, let $e(A,B)$ be the number of edges with one endpoint in $A$ and one endpoint in $B$. If $G$ is directed, then $a(A,B)$ is the number of arcs with tail in $A$ and head in $B$. 
	
\GG{	Let $G$ be a directed (undirected, respectively) graph. For subsets $A,B \subseteq V(G)$, let $(A,B)$ be the bipartite subgraph of $G$ induced by the arcs with tail in $A$ and head in $B$ (induced by edges with one end-vertex in $A$ and another end-vertex in $B$, respectively). Let $a(A,B)$ ($e(A,B)$, respectively) be the number of arcs (edges, respectively) in $(A,B).$
}

    \FIX{If $G$ is a $d$-regular undirected graph, then the adjacency matrix
        of $G$ has $n$ real eigenvalues $\lambda_1 \geq \lambda_2 \geq
        \dots \geq \lambda_n$. Let $\lambda(G) = \max_{i=2}^n |\lambda_i|$ and
        let $\lambda'(G) = \max\{|\lambda_i| \mid |\lambda_i| \neq d, 1
    \leq i \leq n\}$.}
        A connected $d$-regular graph $G$ is a {\em Ramanujan graph} if
    $\lambda'(G)\leq 2{\sqrt {d-1}}$. For any two positive integers $p$ and
    $q$, let $\left(\frac{q}{p}\right)$ be the Legendre symbol; that is, it
    equals to $1$ if $q$ is a quadratic residue modulo $p$ and $-1$
    otherwise. Lubotzky, Phillps, and Sarnak \cite{LPS1988} gave explicit
    constructions of infinitely many $(p+1)$-regular Ramanujan graphs for
    every prime $p$ congruent to 1 mod 4.
    \begin{thm}\cite{LPS1988}\label{thm:1mod4} For every two unequal primes
        $5\leq p<q$ congruent to $1$ mod $4$, there is a $(p+1)$-regular
        Ramanujan graph $G$ with $n$ vertices and girth at least $\log_p
        n,$ where 
			
			$$n=\left\{
			\begin{array}{rcl}
				q(q^2-1)&  & if~\left(\frac{q}{p}\right)=-1;\\
				q(q^2-1)/2&  & if~\left(\frac{q}{p}\right)=1.
			\end{array}\right.$$
            \FIX{Furthermore, $G$ is bipartite iff
            $\left(\frac{q}{p}\right)=-1$.}
		\end{thm}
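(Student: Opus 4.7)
The plan is to present the Lubotzky–Phillips–Sarnak construction explicitly, which is the standard and essentially the only known route to such graphs. First, I would fix the prime $p \equiv 1 \pmod 4$ and invoke Jacobi's four-square theorem to produce exactly $p+1$ integer quadruples $(a_0,a_1,a_2,a_3)$ with $a_0^2 + a_1^2 + a_2^2 + a_3^2 = p$, normalized so that $a_0 > 0$ is odd and $a_1,a_2,a_3$ are even. These correspond to integer quaternions $\alpha = a_0 + a_1 i + a_2 j + a_3 k$ of norm $p$, and they split into $(p+1)/2$ conjugate pairs $\{\alpha,\bar\alpha\}$ which will serve as our symmetric generating set.

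Second, I would embed the quaternion algebra into $M_2(\mathbb{F}_q)$ by choosing a solution of $x^2 + y^2 \equiv -1 \pmod q$ (which exists since $q$ is odd). This sends each $\alpha$ of norm $p$ to a matrix of determinant $p$, well-defined up to scalars in either $PGL_2(\mathbb{F}_q)$ or, when $p$ is a square mod $q$, inside $PSL_2(\mathbb{F}_q)$. Strong approximation (or a direct computation) shows the images generate the whole of $PGL_2(\mathbb{F}_q)$ or $PSL_2(\mathbb{F}_q)$, giving a $(p+1)$-regular Cayley graph whose vertex count matches the case distinction in the theorem according to the Legendre symbol $\left(\tfrac{q}{p}\right)$. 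The same case distinction controls bipartiteness: when $\left(\tfrac{q}{p}\right) = -1$, the generators lie outside $PSL_2(\mathbb{F}_q)$ but inside its double coset, so the graph is bipartite with parts indexed by the two cosets.

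Third, the Ramanujan eigenvalue bound $\lambda'(G) \le 2\sqrt{p}$ is the hard part and will be the main obstacle. The approach is to identify the adjacency operator on $G$ with a classical Hecke operator $T_p$ acting on a space of automorphic functions on $PGL_2(\mathbb{Q}_p) \times PGL_2(\mathbb{Q}_q)$ modulo an arithmetic lattice, and then transfer via the Jacquet–Langlands correspondence to a space of holomorphic cusp forms of weight $2$. The sharp bound on Hecke eigenvalues at $p$ is precisely the Ramanujan–Petersson conjecture for weight $2$ forms, which is Deligne's theorem obtained from the Weil conjectures. Without this deep automorphic input the bound $2\sqrt{p}$ cannot be achieved by any elementary means, so this step is where essentially all the work sits.

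Finally, for the girth bound $g(G) \ge \log_p n$, I would lift any closed walk of length $\ell$ based at the identity to a product of $\ell$ quaternion generators whose image in $PGL_2(\mathbb{F}_q)$ is trivial. Using the freeness of the quaternion group generated by the $\alpha_i$ (a consequence of unique factorization in Lipschitz/Hurwitz quaternions together with the parity normalization), such a product is a nontrivial integer quaternion of norm $p^\ell$ whose coordinates are divisible by $q$, so that $p^\ell \ge q^2$. Since $n = \Theta(q^3)$ this gives $\ell \ge \tfrac{2}{3}\log_p q = \Omega(\log_p n)$, from which the stated bound follows after tracking the constant.
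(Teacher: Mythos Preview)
The paper does not prove this theorem; it is quoted verbatim from Lubotzky--Phillips--Sarnak \cite{LPS1988} and used as a black box. So there is no ``paper's own proof'' to compare against, and your outline is essentially the standard LPS argument that the citation points to.

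Your sketch is accurate in its broad strokes, but there is an arithmetic slip in the girth step. From $p^\ell \ge q^2$ you get $\ell \ge 2\log_p q$, not $\tfrac{2}{3}\log_p q$; combined with $n = \Theta(q^3)$ this yields $\ell \ge \tfrac{2}{3}\log_p n$, which is weaker than the $\log_p n$ in the statement. In the original LPS paper the girth bounds are sharper in the bipartite case (roughly $\tfrac{4}{3}\log_p n$) and the non-bipartite bound is indeed $\tfrac{2}{3}\log_p n$; the uniform $\log_p n$ as written in the present paper is a convenient simplification that holds up to a constant and suffices for the application (one only needs girth tending to infinity with $q$). If you were actually proving the theorem as stated you would need to track which case you are in and adjust the constant accordingly.
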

		
	%	This result was later extended to the case where $p$ is an arbitrary prime power by Morgenstern \cite{Morgenstern1994}. In particular, the following theorem is implied by one of the main results of \cite{Morgenstern1994}) (see  \cite[Theorem 4.13]{Morgenstern1994}). 

	%\begin{thm}\cite{Morgenstern1994}\label{thm:ramanujan}
	%		For every odd prime power $p$ and infinitely many even integers $n$, there is a $(p+1)$-regular Ramanujan graph $G$ with $n$ vertices and girth at least $\frac{2}{3}\log_p n$. 
	%	\end{thm}

    The following lemma is known as the Expander Mixing Lemma \YZn{(see,
    e.g., \cite[Theorem 2.11]{KS2006}).}
	
	\begin{lemma}\label{lem:expandermixing}
		Let $G$ be a $d$-regular graph with order $n$ and $\lambda(G)\leq \lambda$. Then, for every two subsets $S, T\subseteq V(G)$
		\[\left|e(S,T)-\frac{d|S||T|}{n}\right|\leq
        \lambda\sqrt{|S||T|\left(1-\frac{|S|}{n}\right)\left(1-\frac{|T|}{n}\right)}.\]
	In particular, if $|S|=|T|=\frac{n}{2}$, we have that
	
	\begin{equation}\label{eq:eml}
		e(S,T)\geq \frac{(d-\lambda)n}{4}.
	\end{equation}
	\end{lemma}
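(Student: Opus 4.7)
The plan is to give the standard spectral proof of the Expander Mixing Lemma. Let $A$ denote the adjacency matrix of $G$. Since $G$ is $d$-regular, $A$ is a real symmetric matrix with eigenvalues $d = \lambda_1 \ge \lambda_2 \ge \dots \ge \lambda_n$ satisfying $\max_{i \ge 2} |\lambda_i| \le \lambda$, and there is an orthonormal basis $v_1, v_2, \dots, v_n$ of eigenvectors with $v_1 = \tfrac{1}{\sqrt{n}}\mathbf{1}$, where $\mathbf{1}$ is the all-ones vector. Let $\chi_S, \chi_T \in \{0,1\}^n$ be the indicator vectors of $S$ and $T$. The key identity is $e(S,T) = \chi_S^\top A \chi_T$ (counting edges between $S$ and $T$, where edges inside $S \cap T$ are counted twice but this matches the convention implicit in the bound).

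The first step is to decompose $\chi_S = \alpha_S v_1 + \chi_S^\perp$ and $\chi_T = \alpha_T v_1 + \chi_T^\perp$, where $\chi_S^\perp, \chi_T^\perp$ are orthogonal to $v_1$. A direct computation gives $\alpha_S = \langle \chi_S, v_1\rangle = |S|/\sqrt{n}$ and similarly $\alpha_T = |T|/\sqrt{n}$, so that $\|\chi_S^\perp\|^2 = \|\chi_S\|^2 - \alpha_S^2 = |S| - |S|^2/n = |S|(1 - |S|/n)$ and likewise for $T$.

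The second step is to split $\chi_S^\top A \chi_T$ along this decomposition. The $v_1$-components contribute $\alpha_S \alpha_T \cdot d = \frac{d|S||T|}{n}$ since $Av_1 = dv_1$. The cross-terms vanish by orthogonality. For the remaining piece, expand $\chi_S^\perp$ and $\chi_T^\perp$ in the basis $v_2, \dots, v_n$, write $\chi_S^\perp = \sum_{i \ge 2} s_i v_i$ and $\chi_T^\perp = \sum_{i \ge 2} t_i v_i$, so that $(\chi_S^\perp)^\top A \chi_T^\perp = \sum_{i \ge 2} \lambda_i s_i t_i$. Applying $|\lambda_i| \le \lambda$ and Cauchy--Schwarz,
\[
\bigl|(\chi_S^\perp)^\top A \chi_T^\perp\bigr| \le \lambda \sum_{i \ge 2} |s_i||t_i| \le \lambda \|\chi_S^\perp\| \|\chi_T^\perp\|.
\]
Combining these two observations yields
\[
\Bigl| e(S,T) - \frac{d|S||T|}{n}\Bigr| \le \lambda \sqrt{|S|\bigl(1 - |S|/n\bigr) \cdot |T|\bigl(1 - |T|/n\bigr)},
\]
which is the main inequality.

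For the ``in particular'' clause, substitute $|S| = |T| = n/2$. The right-hand side becomes $\lambda \sqrt{(n/2)(1/2)(n/2)(1/2)} = \lambda n / 4$, and the main term becomes $dn/4$, giving $e(S,T) \ge dn/4 - \lambda n/4 = (d-\lambda)n/4$, as required. There is no real obstacle here; the only point worth being careful about is the indicator-vector identity for $e(S,T)$ and keeping the orthogonal decomposition consistent, but both are routine.
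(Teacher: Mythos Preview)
Your proof is correct and is the standard spectral argument for the Expander Mixing Lemma. The paper does not actually prove this lemma; it simply cites it as a known result (referencing Krivelevich--Sudakov), so there is no ``paper's own proof'' to compare against. Your write-up would serve as a perfectly good proof were one needed.
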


    \FIX{
    Note that $\lambda(G)$ as used in Lemma \ref{lem:expandermixing} is
    different from $\lambda'(G)$ used in the definition of Ramanujan
    graphs. The distinction is important, since $\lambda(G) = d >
    \lambda'(G)$ if $G$ is a bipartite Ramanujan graph. However, if $G$
    is not bipartite, then $\lambda(G) = \lambda'(G)$. This follows from
    the fact that $d$ is a simple eigenvalue of the adjacency matrix of a
    connected graph $G$ (which is follows from the Perron-Frobenius
    theorem), and $\lambda_1 = -\lambda_n$ iff $G$ is bipartite (see e.g.
    \cite[Theorem 3.2.4]{CRS2009}).

    In order to obtain non-bipartite Ramanujan graphs from Theorem
    \ref{thm:1mod4}, we need the following.

    \begin{lemma}
        \label{lem:primesolution}
        For any odd prime power $p = r^d$ and any $k \geq 1$ there exists
        infinitely many prime solutions $x$ to the system of congruences 
        \begin{align*}
            x &\equiv 1 \pmod {2^k}\\
            x &\equiv 4 \pmod p.
        \end{align*}
        \end{lemma}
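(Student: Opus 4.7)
The plan is to reduce the system of congruences to a single congruence modulo $2^k p$ via the Chinese Remainder Theorem, and then invoke Dirichlet's theorem on primes in arithmetic progressions.

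First I would observe that since $p = r^d$ is an odd prime power, we have $\gcd(2^k, p) = 1$. Hence by the Chinese Remainder Theorem, the system
\begin{align*}
x &\equiv 1 \pmod{2^k},\\
x &\equiv 4 \pmod p
\end{align*}
is equivalent to a single congruence $x \equiv a \pmod{2^k p}$ for some uniquely determined residue $a$ with $0 \le a < 2^k p$.

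Next, I would verify that $\gcd(a, 2^k p) = 1$, which is the hypothesis needed to apply Dirichlet's theorem. Since $a \equiv 1 \pmod{2^k}$, the residue $a$ is odd, so $\gcd(a, 2^k) = 1$. Since $a \equiv 4 \pmod p$ and $p = r^d$ for the odd prime $r$, we have $\gcd(a, p) = \gcd(4, r^d) = 1$ because $r$ is odd. Combining these yields $\gcd(a, 2^k p) = 1$.

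Finally, I would apply Dirichlet's theorem on primes in arithmetic progressions, which guarantees that the arithmetic progression $\{a + j \cdot 2^k p : j \ge 0\}$ contains infinitely many primes. Each such prime $x$ satisfies both congruences of the system, completing the proof. There is no real obstacle here: the only thing to check carefully is the coprimality condition, and this follows immediately from $p$ being odd together with the fact that $4$ is coprime to every odd number.
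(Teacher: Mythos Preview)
Your proof is correct and follows essentially the same approach as the paper: reduce the two congruences to a single one via the Chinese Remainder Theorem, verify the coprimality condition needed for Dirichlet's theorem (using that $p$ is odd so $2\nmid p$ and $r\nmid 4$), and then apply Dirichlet to conclude.
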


    \begin{proof}
        Note that $2^k$ and $p$ are coprime as $p$ is odd. Thus, by the
        Chinese Remainder Theorem, there exists a solution $x$
        and $x+2^kpn$ is a solution for any integer $n$. By Dirichlet's
        theorem on primes in arithmetic progressions, there exists
        infinitely many primes of the form $x+2^kpn$ so long as $x$ and
        $2^kp$ are coprime. We show that $x$ and $2^kp$ are coprime, which
        completes the proof.

        The only prime factors of $2^kp$ are $2$ and $r$. Since $x \equiv 1
        \pmod {2^k}$, $2$ does not divide $x$. Since $x \equiv 4 \pmod{r^d}$,
        there exists an integer $a$ such that $x = ar^d+4$. If $r$ divides
        $ar^d+4$, then $r$ divides $4$, contradicting that $r$ is odd.
    \end{proof}
    }
	
	Now we are ready to prove the following:
		\begin{thm}\label{thm:girthandlargefas}
            For any integer $g\geq 3$ and odd prime power $p \FIX{\ \equiv 1
            \pmod 4}$, there exists
            a $\frac{p+1}{2}$-regular orgraph $D$ with directed girth at
            least $g$ such that $\fas(D)\geq
            \frac{p+1-2\sqrt{p}}{4(p+1)}a(D)$ and therefore $\fasd(D)\leq
            \frac{4(p+1)}{p+1-2\sqrt{p}}$. 
		\end{thm}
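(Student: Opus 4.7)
The plan is to exhibit $D$ as an Eulerian orientation of a non-bipartite $(p+1)$-regular Ramanujan graph $G$ of girth at least $g$, and then lower-bound $\fas(D)$ by combining a topological-ordering argument with the Expander Mixing Lemma.

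First, I would construct $G$. By Lemma \ref{lem:primesolution} applied with $k=2$, there exist infinitely many primes $q \equiv 1 \pmod 4$ with $q \equiv 4 \pmod p$. Since $4 = 2^2$, any such $q$ is a quadratic residue modulo $p$, so $\left(\tfrac{q}{p}\right)=1$. Choose such a $q$ large enough that $q \neq p$ and $\log_p n \geq g$, where $n = q(q^2-1)/2$. By Theorem \ref{thm:1mod4}, there exists a $(p+1)$-regular Ramanujan graph $G$ on $n$ vertices with girth at least $g$, and $G$ is non-bipartite. Non-bipartiteness, together with the fact that $p+1$ is a simple eigenvalue of the adjacency matrix of the connected graph $G$, gives $\lambda(G) = \lambda'(G) \leq 2\sqrt{p}$.

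Next, I orient the edges of $G$ into an Eulerian orientation $D$: since $p$ is odd, $p+1$ is even, so every vertex of $G$ has even degree and $G$ admits an orientation with $d^+_D(v)=d^-_D(v)=(p+1)/2$ for each $v$. Since $G$ is simple, $D$ is an orgraph, and because every directed cycle in $D$ is a cycle of $G$, the directed girth of $D$ is at least $g$. In particular, $a(D)=\tfrac{(p+1)n}{2}$.

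For the lower bound on $\fas(D)$, let $F \subseteq A(D)$ be any feedback arc set and fix a topological ordering $v_1,\dots,v_n$ of $D-F$. Split $V(D)$ into $S=\{v_1,\dots,v_{n/2}\}$ and $T=\{v_{n/2+1},\dots,v_n\}$; note that $n$ is even (in fact divisible by $4$). Every arc from $T$ to $S$ in $D$ is backward with respect to the ordering and therefore belongs to $F$, so $|F|\ge a(T,S)$. The Eulerian property applied to the cut $(S,T)$ gives $a(S,T)=a(T,S)=\tfrac{1}{2}e(S,T)$, so from inequality (\ref{eq:eml}) of Lemma \ref{lem:expandermixing} I obtain
\[
|F|\ \ge\ \tfrac{1}{2}e(S,T)\ \ge\ \tfrac{(p+1-2\sqrt p)n}{8}\ =\ \tfrac{p+1-2\sqrt p}{4(p+1)}\,a(D).
\]
Since this holds for every feedback arc set $F$, we get $\fas(D)\ge \tfrac{p+1-2\sqrt p}{4(p+1)}\,a(D)$, and the bound $\fasd(D)\le \tfrac{4(p+1)}{p+1-2\sqrt p}$ follows from the general inequality $\fasd(D)\le a(D)/\fas(D)$.

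The main obstacle I expect is ensuring $G$ is non-bipartite: a bipartite Ramanujan graph has $\lambda(G)=p+1$ (because of the eigenvalue $-\lambda_1$), which degrades the Expander Mixing Lemma to a trivial bound. Lemma \ref{lem:primesolution} is precisely the tool that lets me choose $q$ making $\left(\tfrac{q}{p}\right)=1$ (hence $G$ non-bipartite by Theorem \ref{thm:1mod4}) while simultaneously keeping $q$, and thus $n$, large enough to guarantee girth at least $g$.
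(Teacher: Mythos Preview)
Your proof is correct and follows essentially the same approach as the paper: construct a non-bipartite $(p+1)$-regular Ramanujan graph $G$ of large girth via Theorem~\ref{thm:1mod4} and Lemma~\ref{lem:primesolution}, take an Eulerian orientation $D$, and use the Expander Mixing Lemma on a half-half split to lower-bound the number of backward arcs in any ordering. Your write-up is in fact slightly more explicit than the paper's in justifying why $q\equiv 4\pmod p$ forces $\left(\tfrac{q}{p}\right)=1$ and why non-bipartiteness yields $\lambda(G)=\lambda'(G)$.
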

	
	\begin{proof}
		Fix an arbitrary integer $g\geq 3$ \FIX{\ and odd prime power $p
        \equiv 1 \pmod 4$}. \FIX{By Lemma
        \ref{lem:primesolution}, there exists infinitely many primes $q$
        such that $q\equiv 1 \pmod{4}$ and $q\equiv 4 \pmod p$.} Then, by
        applying \FIX{Theorem \ref{thm:1mod4}}, we can choose $q$
        sufficiently large such that there is a \FIX{non-bipartite}
        $(p+1)$-regular Ramanujan graph $G$ with girth at least
        \FIX{$\log_p n \geq g$}. As $G$ is \FIX{a Eulerian} graph, it has a
            cycle decomposition, and therefore, we can obtain \FIX{a
            Eulerian} oriented graph $D$ from $G$ by orienting every cycle
        in the cycle decomposition as a directed one. 
		
        Note that by \FIX{Theorem \ref{thm:1mod4}}, \MAYZR{$n=\frac{q(q+1)(q-1)}{2}$ and therefore} $n$ is even. For an arbitrary
        ordering $\sigma$ of the vertex set $V(D)$, we denote by $A_\sigma$
        and $B_\sigma$ the set of the first and the last $n/2$ vertices,
        respectively. As $D$ is Eulerian, there are equally many arcs from
        $A_\sigma$ to $B_\sigma$ and from $B_\sigma$ to $A_\sigma$. Thus,
        by (\ref{eq:eml}), the number of backward arcs in $\sigma$ is
		
		\[\bas(D,\sigma)\geq a(B_\sigma,A_\sigma)= \frac{e(A_\sigma,B_\sigma)}{2}\geq \frac{p+1-2\sqrt{p}}{8}n=\frac{p+1-2\sqrt{p}}{4(p+1)}a(D). \]
		As the above inequality holds for any ordering, $\fas(D)\geq \frac{p+1-2\sqrt{p}}{4(p+1)}a(D)$ and therefore $\fasd(D)\leq a(D)/\fas(D)\leq \frac{4(p+1)}{p+1-2\sqrt{p}}$. This completes the proof.
	\end{proof}
	
    \FIX{Thus, we have the following corollaries as $5$ and $101$ are
    primes congruent to $1$ modulo $4$, $\frac{6-2\sqrt{5}}{24}>1/16$ and
$\frac{102-2\sqrt{101}}{408}>1/5$.}

		%\begin{cor}
		%	For any integer $g\geq 3$, there exists a $2$-regular orgraph $D$ with directed girth at least $g$ such that $\fas(D)>a(D)/30 $. In particular, for every $g\geq 3$, $\fas(4,g)>\frac{1}{30}$ and $\fasd(4,g)\leq 29$.
		%\end{cor}

		\begin{cor}\label{cor:d=4}
			For any integer $g\geq 3$, there exists a $3$-regular orgraph $D$ with directed girth at least $g$ such that $\fas(D)>a(D)/16 $. In particular, for every $g\geq 3$, $\fas(6,g)>\frac{1}{16}$ and $\fasd(6,g)\leq 15$.
		\end{cor}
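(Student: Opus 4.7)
The plan is to obtain this corollary as a direct instantiation of Theorem~\ref{thm:girthandlargefas}. Specifically, I would take $p = 5$, which is an odd prime power with $5 \equiv 1 \pmod 4$, so the hypotheses of the theorem are satisfied. Applying the theorem with this choice of $p$, for any $g \geq 3$ there exists a $\frac{p+1}{2} = 3$-regular orgraph $D$ with directed girth at least $g$ such that
\[
\fas(D) \;\geq\; \frac{p+1-2\sqrt{p}}{4(p+1)}\,a(D) \;=\; \frac{6-2\sqrt{5}}{24}\,a(D).
\]
Because $D$ is $3$-regular, its maximum degree is $6$, so $D \in \mathcal{D}_{6,g}$.

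The only arithmetic to carry out is the comparison $\frac{6-2\sqrt{5}}{24} > \frac{1}{16}$. I would verify this by clearing denominators: the inequality is equivalent to $16(6-2\sqrt{5}) > 24$, i.e.\ $96 - 32\sqrt{5} > 24$, i.e.\ $72 > 32\sqrt{5}$, i.e.\ $\sqrt{5} < 9/4$, which holds since $5 < 81/16$. Thus $\fas(D) > a(D)/16$, which proves the first sentence of the corollary.

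For the \emph{In particular} clause, the lower bound $\fas(6,g) > \tfrac{1}{16}$ is immediate from the definition of $\fas(\Delta,g)$ as the supremum of $\fas(D)/a(D)$ over $D \in \mathcal{D}_{\Delta,g}$, applied to the orgraph $D$ just constructed (which lies in $\mathcal{D}_{6,g}$). For the upper bound on $\fasd$, I would invoke inequality~(\ref{main-bound}): since $\fas(\Delta,g) > 1/16$, we get $\lfloor 1/\fas(\Delta,g) \rfloor \leq 15$, and hence $\fasd(6,g) \leq 15$.

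There is really no obstacle here beyond bookkeeping; the corollary is a numerical specialization of the main theorem. The only thing worth being careful about is to cite Theorem~\ref{thm:girthandlargefas} rather than re-derive the bound, and to make the two-line numerical verification $\frac{6-2\sqrt 5}{24}>\frac 1{16}$ explicit so that the jump from the general bound to the clean constant $1/16$ is transparent to the reader.
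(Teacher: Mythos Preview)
Your proposal is correct and matches the paper's own approach: the corollary is obtained by instantiating Theorem~\ref{thm:girthandlargefas} at $p=5$ and noting the numerical inequality $\frac{6-2\sqrt{5}}{24}>\frac{1}{16}$. The paper does not spell out the arithmetic or the appeal to~(\ref{main-bound}) as explicitly as you do, but the underlying argument is identical.
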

		
		\begin{cor}
			For any integer $g\geq 3$, there exists a $51$-regular orgraph $D$ with directed girth at least $g$ such that $\fas(D)>a(D)/5$. In particular, for every $g\geq 3$, $\fas(102,g)>1/5$ and $\fasd(102,g)\leq 4$. 
		\end{cor}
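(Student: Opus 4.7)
The plan is a direct instantiation of Theorem \ref{thm:girthandlargefas} with $p = 101$. First I would verify the hypotheses on $p$: namely that $101$ is an odd prime power (it is prime) and that $101 \equiv 1 \pmod 4$ (since $101 = 4\cdot 25 + 1$). The theorem then produces, for every $g \ge 3$, a $\tfrac{p+1}{2} = 51$-regular orgraph $D$ of directed girth at least $g$ satisfying
\[
\fas(D) \;\ge\; \frac{p+1 - 2\sqrt{p}}{4(p+1)}\,a(D) \;=\; \frac{102 - 2\sqrt{101}}{408}\,a(D).
\]

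The one computation to carry out is the arithmetic inequality $\frac{102 - 2\sqrt{101}}{408} > \frac{1}{5}$. Clearing denominators, this reduces to $p+1 > 10\sqrt{p}$ at $p = 101$, or equivalently, after squaring, $p^2 - 98p + 1 > 0$. At $p = 101$ this evaluates to $10201 - 9898 + 1 = 304 > 0$, so the bound indeed holds, giving $\fas(D) > a(D)/5$ as required.

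For the ``in particular'' assertion, I would observe that since $D$ is $51$-regular every vertex has total degree $102$, so $D \in \D_{102, g}$. By the definition of $\fas(\Delta, g)$ as a supremum over $\D_{\Delta, g}$, this yields $\fas(102, g) \ge \fas(D)/a(D) > 1/5$. Finally, inequality (\ref{main-bound}) then gives $\fasd(102, g) \le \lfloor 1/\fas(102, g) \rfloor \le 4$.

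I do not anticipate any real obstacle, since the corollary is essentially a substitution into a pre-proved general theorem. The only judgment call is choosing $p$ so that $(p+1-2\sqrt{p})/(4(p+1))$ just exceeds $1/5$; the constraint $p^2 - 98p + 1 > 0$ forces $p > 97.98$, and $p = 101$ is the smallest prime power $\equiv 1 \pmod 4$ satisfying this (note that $97$ is prime and $\equiv 1 \pmod 4$ but fails the inequality).
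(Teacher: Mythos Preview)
Your proposal is correct and is exactly the paper's own argument: the paper states the corollary immediately after noting that $101$ is a prime congruent to $1$ modulo $4$ and that $\frac{102-2\sqrt{101}}{408}>1/5$, i.e., a direct instantiation of Theorem~\ref{thm:girthandlargefas} at $p=101$. Your additional remark explaining why $101$ is the optimal choice of $p$ is a nice touch that the paper only mentions later in passing.
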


\MAYZR{Note that Theorem
        \ref{thm:girthandlargefas} is applied to $101$ specifically since
        Theorem \ref{thm:girthandlargefas} cannot
        directly give an upper bound on $\fasd(\Delta,g)$ of less than $4$
    as $\frac{4(p+1)}{p+1-2\sqrt{p}} > 4$ for all $p > 1$ and $101$ is
the smallest prime $p \equiv 1 \pmod 4$ for which we get $\frac{4(p+1)}{p+1-2\sqrt{p}} < 5$.}

		The following result for maximum degree 3 can be obtained by applying a splitting operation to orgraphs obtained from Corollary \ref{cor:d=4}. 
		
        \begin{thm}\label{thm:d=3}
            For any integer $g\geq 3$, there exists an orgraph $D$ with
            $\Delta(D) = 3$, $\fas(D) > a(D)/95$ and directed girth at
            least $g$. In particular, for every $g\geq 3$,
            $\fas(3,g)>\frac{1}{95}$ and $\fasd(3,g)\leq 94$.
		\end{thm}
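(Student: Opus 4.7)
I would obtain $D$ by splitting each vertex of the $3$-regular orgraph $D_0$ produced by Corollary \ref{cor:d=4} so that the maximum degree drops to $3$ while the girth and the $\fas/a$ ratio stay controlled. By the proof of Theorem \ref{thm:girthandlargefas} applied with $p=5$, $D_0$ satisfies $g(D_0) \ge g$ and the sharper bound $\fas(D_0) \ge \tfrac{6-2\sqrt{5}}{24}\,a(D_0)$, which underlies (and is strictly stronger than) the $a(D_0)/16$ inequality stated in the corollary.

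For each $v \in V(D_0)$ with $N^-(v)=\{a_1,a_2,a_3\}$ and $N^+(v)=\{b_1,b_2,b_3\}$, I would replace $v$ by a $4$-vertex \emph{path gadget} $\mathrm{gad}(v)$ on new vertices $u_1,u_2,u_3,u_4$ with internal arcs $u_1 \to u_2 \to u_3 \to u_4$, routing $a_1,a_2 \to u_1$, $a_3 \to u_2$, $u_3 \to b_1$ and $u_4 \to b_2,b_3$. Each gadget vertex has total degree exactly $3$; each of the nine $(a_i,b_j)$ pairs is realised by a directed path through $\mathrm{gad}(v)$; and $\mathrm{gad}(v)$ is itself acyclic. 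Moreover, every entry--exit path in $\mathrm{gad}(v)$ passes through both $u_2$ and $u_3$, so no simple cycle of $D$ can visit the same gadget twice; contracting each gadget therefore maps simple cycles of $D$ onto simple cycles of $D_0$ of no greater length, giving $g(D) \ge g(D_0) \ge g$. A count yields $a(D)=a(D_0)+3|V(D_0)|=2\,a(D_0)$.

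The heart of the proof is to show $\fas(D) \ge \fas(D_0)/3$. Given any FAS $F$ of $D$, write $F=F_o \cup F_i$, where $F_o$ consists of the arcs of $D$ inherited from $D_0$ and $F_i$ of the internal gadget arcs, and define
\[
F_0^{*}=\{\text{$D_0$-counterparts of arcs in $F_o$}\}\;\cup\;\bigcup_{v:\,A(\mathrm{gad}(v))\cap F_i \ne \emptyset}\{a_1v,a_2v,a_3v\}\subseteq A(D_0),
\]
so that $|F_0^{*}|\le|F_o|+3|F_i|\le 3|F|$. I claim $F_0^{*}$ is a FAS of $D_0$: if $C_0$ were a cycle in $D_0-F_0^{*}$, then at each vertex $v$ of $C_0$ its in-arc on $C_0$ avoids $F_0^{*}$, so no in-arc of $v$ lies in $F_0^{*}$, forcing $A(\mathrm{gad}(v))\cap F_i = \emptyset$. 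But then, for each $v$ on $C_0$, the path in $\mathrm{gad}(v)$ realising the $(a_i,b_j)$ pair used by $C_0$ at $v$ still survives in $D-F$, and concatenating these paths with the original arcs of $C_0$ yields a cycle in $D-F$, contradicting that $F$ is a FAS.

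Combining $\fas(D)\ge\fas(D_0)/3$ with $a(D)=2\,a(D_0)$ gives $\fas(D)/a(D) \ge \tfrac{6-2\sqrt{5}}{144} > 1/95$, so $\fas(3,g)>1/95$ and hence $\fasd(3,g)\le 94$. The main obstacle lies in the lifting step of the key claim: the gadget must realise \emph{all} nine $(a_i,b_j)$ pairs, otherwise some cycles of $D_0$ have no corresponding cycle in $D$ and the implication $F\text{ FAS of }D \Rightarrow F_0^{*}\text{ FAS of }D_0$ fails, while at the same time the gadget must have only three internal arcs so that $a(D)=2\,a(D_0)$ and the constant stays strictly below $95$; the $4$-vertex path gadget above is tuned to achieve both properties simultaneously.
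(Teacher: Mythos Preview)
Your proof is correct and essentially identical to the paper's: both replace each vertex of the $3$-regular orgraph from Theorem~\ref{thm:girthandlargefas} (with $p=5$) by the same $4$-vertex path gadget, transfer a FAS of the split digraph to a FAS of the original by swapping each used internal arc for the three in-arcs of the corresponding vertex (yielding $\fas(D)\ge \fas(D_0)/3$), and combine this with $a(D)=2a(D_0)$ to get the constant $\tfrac{6-2\sqrt5}{144}>1/95$. Your exposition is in fact slightly more careful than the paper's, as you make explicit the girth preservation and the role of the ``all nine $(a_i,b_j)$ pairs'' property in the lifting step.
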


        \FIX{
\begin{proof}
    Fix $g \geq 3$. By Theorem \ref{thm:girthandlargefas} applied to $p=5$,
    let $D$ be a $3$-regular digraph with $g(D) \geq g$ and $\fas(D) \geq
    \frac{5+1-2\sqrt{5}}{4(5+1)}a(D)$.

    We now \textit{split} every vertex $v \in V(D)$ as follows.
    Let $v \in V(D)$ be a vertex and let $N^-(v) = \{w_1,w_2,w_3\}$ and
    $N^+(v) = \{q_1,q_2,q_3\}$. We replace $v$ by $4$ vertices $v'_s$,
    $v_s$, $v_t$, and $v'_t$ and the arcs $v'_sv_s$, $v_sv_t$, and
    $v_tv'_t$. We then add the arcs $w_1v_s,w_2v'_s,w_3v'_s$ and $v_tq_1,
    v'_tq_2, q_3v'_t$. See Figure \ref{fig:splitting}. We call the arcs
    $v'_sv_s,v_sv_t,v_tv'_t$ \textit{internal} arcs of $v$ and the arcs
    $w_1v_s,w_2v'_s,w_3v'_s$ and $v_tq_1,v'_tq_2,q_3v'_t$
    \textit{original}. Let $D'$ be the digraph obtained by splitting every
    vertex of $D$ in this way.

    First, we observe that every vertex in $D'$ has at most $3$ neighbors.
    Furthermore, $a(D') = a(D) + 3n(D) = 2a(D)$. We now show that $\fas(D')
    \geq (1/3) \fas(D)$. Suppose $F'$ is a feedback arc set of $D'$. We
    obtain a feedback arc set $F$ of $D$ with $|F| \leq 3|F'|$ as follows.
    For every original arc $xy$ in $F'$, simply replace $xy$ by the
    corresponding original arc in $D$. Now, for every $v \in V(D)$, replace
    any internal arc $xy$ of $v$ contained in $F'$, by the $3$ original
    arcs entering $\{v_s,v'_s\}$. Since every cycle which uses $xy$ must
    also use one of the original arcs entering $\{v_s,v'_s\}$, we obtain a
    feedback arc set $F$ of $D$. Since every arc in $F'$ was replaced by at
    most $3$ arcs, we have $|F| \leq 3|F'|$. Thus, $\fas(D') \geq
    (1/3)\fas(D)$.
    Now, \[
        \fas(D') \geq \frac{\fas(D)}{3} \geq
        \frac{5+1-2\sqrt{5}}{3 \cdot 4(5+1)}a(D) =
        \frac{5+1-2\sqrt{5}}{2\cdot 3 \cdot 4(5+1)}a(D') > \frac{1}{95}a(D')
    \]
    where we used in the third step that $a(D') = 2a(D)$.

    \begin{figure}[h]
        \centering
\begin{tikzpicture}[node distance=0.8cm]
    \node[vertexS] (v) at (0,0) {$v$};
    \node[vertexS] (w1) at (-2,1) {$w_1$};
    \node[vertexS] (w2) at (-2,0) {$w_2$};
    \node[vertexS] (w3) at (-2,-1) {$w_3$};

    \node[vertexS] (q1) at (2,1) {$q_1$};
    \node[vertexS] (q2) at (2,0) {$q_2$};
    \node[vertexS] (q3) at (2,-1) {$q_3$};

    \draw[arc] (w1) -- (v);
    \draw[arc] (w2) -- (v);
    \draw[arc] (w3) -- (v);

    \draw[arc] (v) -- (q1);
    \draw[arc] (v) -- (q2);
    \draw[arc] (v) -- (q3);

    \draw[arc] (2.75,0) to node[above] {\small Split $v$} (4.25,0);

    \begin{scope}[shift={(7,0)}]

        \node[vertexS] (vs) at (-0.5,0) {$v_s$};
        \node[vertexS] (vt) at (0.5,0) {$v_t$};

        \node[vertexS] (vsp) at (-1.25,-0.5) {$v'_s$};
        \node[vertexS] (vtp) at (1.25,-0.5) {$v'_t$};

        \node[vertexS] (w1) at (-2,1) {$w_1$};
        \node[vertexS] (w2) at (-2,0) {$w_2$};
        \node[vertexS] (w3) at (-2,-1) {$w_3$};

        \node[vertexS] (q1) at (2,1) {$q_1$};
        \node[vertexS] (q2) at (2,0) {$q_2$};
        \node[vertexS] (q3) at (2,-1) {$q_3$};

        \draw[arc] (w1) -- (vs);
        \draw[arc] (w2) -- (vsp);
        \draw[arc] (w3) -- (vsp);

        \draw[arc] (vt) -- (q1);
        \draw[arc] (vtp) -- (q2);
        \draw[arc] (vtp) -- (q3);

        \draw[arc] (vsp) -- (vs);
        \draw[arc] (vs) -- (vt);
        \draw[arc] (vt) -- (vtp);
    \end{scope}

\end{tikzpicture}
        \caption{Splitting a vertex $v$.}
        \label{fig:splitting}    
    \end{figure}

   \end{proof}
}	
		
				We now show that there is a constant $c>0$ such that $\fasd(\Delta,g)=2$ when $\Delta\geq c$ no matter how large $g$ is. To prove it we need the following special case of Hoeffding's inequality \YZn{\cite{Hoeffding1963}}.
			
			\begin{lemma}\label{lem: Hoeffding's ineq}
				Let $X_1$, $\dots$, $X_n$ be independent random variables such that $0\leq X_i\leq 1$ for all $i\in [n]$. Let $X=\sum_{i=1}^n X_i$. Then, for any real $\alpha>0$, we have that \[Pr[X-\mathbb{E}(X)\leq -\alpha]\leq e^{-2\alpha^2/n}.\]
			\end{lemma}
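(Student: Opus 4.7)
The plan is to apply the standard exponential moment (Chernoff) method, which is the textbook route to Hoeffding's inequality. Since each $X_i$ lies in $[0,1]$, the centered variables $Y_i = X_i - \mathbb{E}(X_i)$ are supported in an interval of length at most $1$ and have mean zero. Writing $Y = \sum_{i=1}^n Y_i = X - \mathbb{E}(X)$, the event in question is $\{Y \le -\alpha\}$.

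For any $t>0$, I would apply Markov's inequality to $e^{-tY}$ to obtain
\[
\Pr[Y \le -\alpha] \;=\; \Pr[e^{-tY} \ge e^{t\alpha}] \;\le\; e^{-t\alpha}\,\mathbb{E}[e^{-tY}].
\]
Independence of the $X_i$ (hence of the $Y_i$) gives $\mathbb{E}[e^{-tY}] = \prod_{i=1}^n \mathbb{E}[e^{-tY_i}]$, so the task reduces to bounding a single moment generating function.

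The key step is Hoeffding's lemma: if $Z$ is a random variable with $\mathbb{E}[Z]=0$ and $a \le Z \le b$, then $\mathbb{E}[e^{sZ}] \le e^{s^2(b-a)^2/8}$. I would apply this to $Z = -Y_i$, whose support has length at most $1$, yielding $\mathbb{E}[e^{-tY_i}] \le e^{t^2/8}$. Combining,
\[
\Pr[Y \le -\alpha] \;\le\; \exp\!\left(-t\alpha + \tfrac{nt^2}{8}\right).
\]
Optimizing over $t>0$ (the minimum is attained at $t = 4\alpha/n$) produces the desired bound $e^{-2\alpha^2/n}$.

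The only nontrivial ingredient is Hoeffding's lemma itself, which is proved by noting that $e^{sz}$ is convex in $z$, so $e^{sZ} \le \tfrac{b-Z}{b-a}e^{sa} + \tfrac{Z-a}{b-a}e^{sb}$; taking expectations (using $\mathbb{E}[Z]=0$) and then showing via a Taylor expansion in $s$ that $\ln\!\bigl(\tfrac{b}{b-a}e^{sa} + \tfrac{-a}{b-a}e^{sb}\bigr) \le s^2(b-a)^2/8$ completes it. In the exposition I would either cite Hoeffding \cite{Hoeffding1963} directly for this lemma or include the short convexity argument; the rest of the proof is then the routine Chernoff optimization described above.
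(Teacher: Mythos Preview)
Your proof is correct and is exactly the standard Chernoff--Hoeffding argument. The paper, however, does not prove this lemma at all: it is stated as a known special case of Hoeffding's inequality with a citation to \cite{Hoeffding1963}, so there is no proof in the paper to compare against.
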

		\begin{thm}
			For any integer $g\geq 3$, there exists an orgraph $D$ with maximum degree $\Delta(D)=\MAYZR{1362}$, $\fas(D) > a(D)/3$ and directed girth at least $g$. In particular, for every $g\geq 3$, $\fas(\MAYZR{1362},g)>\frac{1}{3}$ and $\fasd(\MAYZR{1362},g)= 2$.
		\end{thm}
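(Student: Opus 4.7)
The plan is to combine Hoeffding's inequality with a union bound, applied to a random orientation of the Ramanujan graph provided by Theorem \ref{thm:1mod4}. Since $1361$ is a prime congruent to $1$ modulo $4$, I would apply Theorem \ref{thm:1mod4} with $p = 1361$ to obtain a $1362$-regular Ramanujan graph $G$ on $n$ vertices whose undirected girth is at least $g$ (by choosing the auxiliary prime $q$ sufficiently large). Then I would orient each edge of $G$ independently and uniformly at random to produce an orgraph $D$ with $\Delta(D)=1362$. Since every directed cycle of $D$ projects to an undirected cycle of $G$, the directed girth of $D$ is automatically at least $g$.

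Fix any ordering $\sigma$ of $V(D)$, and write $B_\sigma$ for the number of backward arcs of $D$ with respect to $\sigma$. Each edge $e\in E(G)$ is, independently, oriented as a backward arc of $\sigma$ with probability $1/2$, so $B_\sigma = \sum_{e\in E(G)} Y_e$ is a sum of $m = a(D) = 681n$ independent Bernoulli$(1/2)$ random variables, and $\mathbb{E}[B_\sigma] = m/2$. Applying Lemma \ref{lem: Hoeffding's ineq} with $\alpha = m/6$ gives
\[
\Pr[B_\sigma \leq m/3] \;=\; \Pr[B_\sigma - \mathbb{E}[B_\sigma] \leq -m/6] \;\leq\; e^{-m/18}.
\]
Taking a union bound over the $n!$ orderings of $V(D)$ yields $\Pr[\exists\,\sigma : B_\sigma \leq m/3] \leq n!\,e^{-m/18}$. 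If this quantity is strictly less than $1$, then with positive probability some concrete orientation $D$ satisfies $B_\sigma > m/3$ for \emph{every} ordering $\sigma$. Since the backward arcs of any ordering form a feedback arc set, we conclude $\fas(D) > a(D)/3$, and hence by (\ref{main-bound}) $\fasd(D) \leq \lfloor 1/\fas(D) \rfloor < 3$, which combined with $\fasd(D)\ge 2$ gives $\fasd(D) = 2$.

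The crux is ensuring the union bound succeeds: using $\log(n!)\le n\log n$ we need $m > 18\log(n!)$, i.e.\ $\log n < 681/18 \approx 37.83$. The constant $1362$ has been chosen precisely so that this inequality leaves enough room to accommodate a Ramanujan graph of girth $\ge g$; the delicate point is that Theorem \ref{thm:1mod4} gives girth only at the rate $\log_p n$, so selecting $p$ (and $q$) to simultaneously give a $1362$-regular graph, girth $\ge g$, and $n < e^{37.83}$ for every $g\ge 3$ is where the real work lies. This parameter balancing, together with a verification that one can indeed hit the desired $\Delta = 1362$ (rather than just $\Delta \leq 1362$) by using the regularity of the Ramanujan graph, is the main obstacle to making the argument completely rigorous.
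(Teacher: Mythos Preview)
Your union-bound argument has a fatal gap, not merely a ``delicate point''. You need $n!\cdot e^{-m/18}<1$, which forces $\log n < 681/18$, i.e.\ $n<e^{37.83}$ --- an \emph{absolute} upper bound on the number of vertices. But any $1362$-regular graph of girth at least $g$ must satisfy $n\gtrsim 1361^{(g-1)/2}$ by the Moore bound, and Theorem~\ref{thm:1mod4} only guarantees girth $\log_{1361}n$, so in fact you need $n\ge 1361^{g}$. These two constraints are incompatible already for $g$ around $11$ or $12$, and certainly cannot both hold for arbitrary $g$. The reason the naive union bound fails is structural: $\log(n!)\sim n\log n$ while the Hoeffding exponent is only linear in $n$, so no choice of the regularity constant can rescue the argument.

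The paper avoids union-bounding over orderings altogether. For an ordering $\sigma$, it lower-bounds the backward arcs by the arcs going from the second half to the first, plus those going from second quarter to first and fourth to third, plus the analogous arcs at the eighth level. The Expander Mixing Lemma (Lemma~\ref{lem:expandermixing}) guarantees that the number of \emph{edges} in each such bipartite piece is close to $dn/2^{i+1}$ \emph{regardless of which vertices land in which block}; the only randomness left is what fraction of those edges get oriented backward. Hoeffding is then applied once per ordered equipartition of $V(G)$ into $2^i$ blocks, and the union bound runs over the at most $2^{i(n-1)}$ such equipartitions rather than over $n!$ orderings. Choosing the deviation $\alpha_i$ so that each failure probability is $2^{-in}$ makes the total failure probability $\sum_{i=1}^{3}2^{i(n-1)}2^{-in}=7/8<1$, \emph{independently of $n$}. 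Summing the three levels yields roughly $\tfrac{7d}{32}n$ backward arcs minus error terms of order $\sqrt{d}\,n$, and $d=1362$ is chosen to make this exceed $a(D)/3 = dn/6$. The Expander Mixing Lemma is thus the essential missing ingredient in your approach: it is what decouples the deterministic edge-count from the particular ordering and collapses the union bound to exponential rather than factorial size.
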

		\begin{proof}
            Fix $g \geq 3$. Let $d=\MAYZR{1362}$ and let $G$ be a \FIX{non-bipartite} $\MAYZR{1362}$-regular
            Ramanujan graph ($\lambda=\lambda(G)\leq 2\sqrt{d-1}$) with order
            $n\equiv 0 \pmod 8$ such that \FIX{$g(G) \geq g$}. \FIX{The existence
                of such graph is guaranteed by applying Theorem \ref{thm:1mod4}
                for $p=d-1=\MAYZR{1361}$ and a sufficiently large prime $q$ with
                $q\equiv 1 \pmod 8$ and $q \equiv 4 \pmod p$, where the existence of such
            prime $q$ follows from Lemma \ref{lem:primesolution}.}
		   
Let $S(G)$ be the set of all orderings on the vertex set $V(G)$. For any integer $1\leq i\leq 3$, we denote by $B(i)$ the set of all bit strings of length $i$. For any $\sigma\in S(G)$,  let $A_{\sigma}^0$ and $A_{\sigma}^1$ be the first $n/2$ vertices and the last $n/2$ vertices in the ordering $\sigma$, respectively. Similarly, for any $2\leq i\leq 3$ and bit string $\epsilon\in B(i-1)$, let $A^{\epsilon 0}_\sigma$ and $A_{\sigma}^{\epsilon1}$ be the first and second half of vertices in $A^{\epsilon}_{\sigma}$ in the ordering $\sigma$. Clearly, $|A_\sigma^0|=|A_\sigma^1|=n/2$ and generally for every integer $1\leq i\leq 3$ and $\epsilon\in B(i)$, since $n\equiv 0({\rm mod}~8)$ we have that

		   \begin{equation}\label{eq:size}
		   	|A^\epsilon_\sigma|=n/2^i.
		   \end{equation}
		    
		    \YZn{
		        \begin{figure}[h]
		 
		    	  		\begin{tikzpicture}[node distance=1.5cm]
		    		
		    		\node[vertexB] (p1) {$v_1$};
		    		\node[vertexB] (p2) [right of = p1] {$v_2$};
		    		\node[vertexB] (p3) [right of = p2] {$v_3$};
		    		\node[vertexB] (p4) [right of = p3] {$v_4$};
		    		\node[vertexB] (p5) [right of = p4] {$v_5$};
		    		\node[vertexB] (p6) [right of = p5] {$v_6$};
		    		\node[vertexB] (p7) [right of = p6] {$v_7$};
		    		\node[vertexB] (p8) [right of = p7] {$v_8$};
		    		
		    		\draw [thick] (1.075,0.2)--(1.375,0.2);
		    		\draw [thick] (1.075,-0.2)--(1.375,-0.2);
		    		\draw [thick] (1.075,0.2)--(1.375,-0.2);
		    		\draw [thick] (1.075,-0.2)--(1.375,0.2);
		    		
		    	    \draw [thick] (4.375,0.2)--(4.675,0.2);
		    	     \draw [thick] (4.375,-0.2)--(4.675,-0.2);
		    	      \draw [thick] (4.375,0.2)--(4.675,-0.2);
		    	       \draw [thick] (4.375,-0.2)--(4.675,0.2);

		    		\draw [rounded corners=10pt] (-0.3,-0.4) rectangle (1.1,0.4);
		    		\draw [rounded corners=10pt] (1.35,-0.4) rectangle (2.75,0.4);
		    		
		    		\draw [rounded corners=10pt] (3,-0.4) rectangle (4.4,0.4);
		    		\draw [rounded corners=10pt] (4.65,-0.4) rectangle (6.05,0.4);
                 	\draw (0.5,0.75) node {$A^{00}_\sigma$};
                 	\draw (2.1,0.75) node {$A^{01}_\sigma$};
                 	\draw (3.7,0.75) node {$A^{10}_\sigma$};
                 	\draw (5.3,0.75) node {$A^{11}_\sigma$};
		    		\draw (3,-1) node {(a) $Q^2_\sigma$ where $\sigma=(1,2,3,4,5,6,7,8)$.};
		    	\end{tikzpicture}\hfill
		    	\begin{tikzpicture}[node distance=1.5cm]
		    		
		    		\node[vertexB] (p1) {$v_2$};
		    		\node[vertexB] (p2) [right of = p1] {$v_1$};
		    		\node[vertexB] (p3) [right of = p2] {$v_4$};
		    		\node[vertexB] (p4) [right of = p3] {$v_3$};
		    		\node[vertexB] (p5) [right of = p4] {$v_6$};
		    		\node[vertexB] (p6) [right of = p5] {$v_5$};
		    		\node[vertexB] (p7) [right of = p6] {$v_8$};
		    		\node[vertexB] (p8) [right of = p7] {$p_7$};
		    		    		\draw [thick] (1.075,0.2)--(1.375,0.2);
		    		\draw [thick] (1.075,-0.2)--(1.375,-0.2);
		    		\draw [thick] (1.075,0.2)--(1.375,-0.2);
		    		\draw [thick] (1.075,-0.2)--(1.375,0.2);
		    		
		    		\draw [thick] (4.375,0.2)--(4.675,0.2);
		    		\draw [thick] (4.375,-0.2)--(4.675,-0.2);
		    		\draw [thick] (4.375,0.2)--(4.675,-0.2);
		    		\draw [thick] (4.375,-0.2)--(4.675,0.2);
		    		  		\draw [rounded corners=10pt] (-0.3,-0.4) rectangle (1.1,0.4);
		    		\draw [rounded corners=10pt] (1.35,-0.4) rectangle (2.75,0.4);
		    		
		    		\draw [rounded corners=10pt] (3,-0.4) rectangle (4.4,0.4);
		    		\draw [rounded corners=10pt] (4.65,-0.4) rectangle (6.05,0.4);
		    		             	\draw (0.5,0.75) node {$A^{00}_{\sigma'}$};
		    		\draw (2.1,0.75) node {$A^{01}_{\sigma'}$};
		    		\draw (3.7,0.75) node {$A^{10}_{\sigma'}$};
		    		\draw (5.3,0.75) node {$A^{11}_{\sigma'}$};
		    			\draw (3,-1) node {(b) $Q^2_{\sigma'}$ where $\sigma'=(2,1,4,3,6,5,8,7)$.};
		    	\end{tikzpicture}
		    
		    		  \vspace{0.4cm}
		    		  \begin{subfigure}{\textwidth}
		    		  			    		\centering
		    	\begin{tikzpicture}[node distance=1.5cm]

		    		\node[vertexB] (p1) {$v_5$};
		    		\node[vertexB] (p2) [right of = p1] {$v_6$};
		    		\node[vertexB] (p3) [right of = p2] {$v_7$};
		    		\node[vertexB] (p4) [right of = p3] {$v_8$};
		    		\node[vertexB] (p5) [right of = p4] {$v_1$};
		    		\node[vertexB] (p6) [right of = p5] {$v_2$};
		    		\node[vertexB] (p7) [right of = p6] {$v_3$};
		    		\node[vertexB] (p8) [right of = p7] {$v_4$};
		    		    		\draw [thick] (1.075,0.2)--(1.375,0.2);
		    		\draw [thick] (1.075,-0.2)--(1.375,-0.2);
		    		\draw [thick] (1.075,0.2)--(1.375,-0.2);
		    		\draw [thick] (1.075,-0.2)--(1.375,0.2);
		    		
		    		\draw [thick] (4.375,0.2)--(4.675,0.2);
		    		\draw [thick] (4.375,-0.2)--(4.675,-0.2);
		    		\draw [thick] (4.375,0.2)--(4.675,-0.2);
		    		\draw [thick] (4.375,-0.2)--(4.675,0.2);
		    		  		\draw [rounded corners=10pt] (-0.3,-0.4) rectangle (1.1,0.4);
		    		\draw [rounded corners=10pt] (1.35,-0.4) rectangle (2.75,0.4);
		    		
		    		\draw [rounded corners=10pt] (3,-0.4) rectangle (4.4,0.4);
		    		\draw [rounded corners=10pt] (4.65,-0.4) rectangle (6.05,0.4);
		    		   \draw (0.5,0.75) node {$A^{00}_{\sigma''}$};
		    		\draw (2.1,0.75) node {$A^{01}_{\sigma''}$};
		    		\draw (3.7,0.75) node {$A^{10}_{\sigma''}$};
		    		\draw (5.3,0.75) node {$A^{11}_{\sigma''}$};
		    			\draw (3,-1) node {(c) $Q^2_{\sigma''}$ where $\sigma''=(5,6,7,8,1,2,3,4)$.};
		    	\end{tikzpicture}
		    	  \end{subfigure}

		    	\caption{\YZn{Examples of equivalent and non-equivalent graphs, where $Q$ is an arbitrary graph on eight vertices. One can check from the definition that $Q^2_{\sigma}$ and $Q^2_{\sigma'}$ are equivlent w.r.t. their orderings, and that $Q^2_{\sigma}$ and $Q^2_{\sigma''}$ are non-equivalent w.r.t. their orderings. }}
		    	\label{identical}
		    \end{figure}
		}
		   Let $G_\sigma^1$ be the bipartite subgraph $(A_\sigma^0, A_\sigma^1)$ of $G$, and for every integer $2\leq i\leq 3$, let $G_\sigma^i=\cup_{\epsilon\in B(i-1)}(A^{\epsilon0}_\sigma,A^{\epsilon1}_{\sigma})$. For every $1\leq i\leq 3$ and two different orderings $\sigma$ and $\sigma'$, we say that $G_\sigma^i$ and $G_{\sigma'}^i$ are {\em \YZn{equivalent w.r.t. their orderings}} if and only if $A^\epsilon_\sigma=A^\epsilon_{\sigma'}$ for all bit strings $\epsilon\in B(i)$. Note that whether $G_\sigma^i$ and $G_{\sigma'}^i$ are \YZn{equivalent w.r.t. their orderings} only depends on the sets $A^\epsilon_\sigma$s and $A^\epsilon_{\sigma'}$s. $G_\sigma^i$ and $G_{\sigma'}^i$ can be \YZn{equivalent w.r.t. their orderings} when $\sigma\neq \sigma'$ (see, e.g., Fig. \ref{identical}(a) and \ref{identical}(b)). Clearly, if $G_\sigma^i$ and $G_{\sigma'}^i$ are \YZn{equivalent w.r.t. their orderings} then $G_\sigma^i=G_{\sigma'}^i$. In contrast, $G_\sigma^i$ and $G_{\sigma'}^i$ can be \YZn{non-equivalent w.r.t. their orderings} even if $G_\sigma^i=G_{\sigma'}^i$ (see, e.g., Fig. \ref{identical}(a) and \ref{identical}(c)). \MAYZR{Let $\Omega_i$ be a set containing one arbitrary representative from each equivalence class in $\{G^i_\sigma : \sigma \in S(G)\}$.
		   }. Then for every $H\in \Omega_i$ where $1\leq i\leq 3$, by Lemma \ref{lem:expandermixing} and (\ref{eq:size}), we have that 
		   
		   \YZn{
		   \begin{eqnarray*}
		   \left|e(H)-2^{i-1}\cdot\frac{d\cdot(n/2^i)^2}{n}\right|\leq 2^{i-1} \cdot\lambda\sqrt{ (n/2^i)^2\cdot(1-1/2^i)^2},
		   \end{eqnarray*}
		  which can be rewritten to}
		   
		   \begin{equation}\label{eq:m1}
		   	 \frac{(d-(2^i-1)\lambda)n}{2^{i+1}}\leq e(H)\leq \frac{(d+(2^i-1)\lambda)n}{2^{i+1}}.
		   \end{equation}
		   
		  \YZn{ 
 Let us analyze the cardinality of $\Omega_i$. 

\2
		   
		   {\bf Claim~A:}  For every $1\leq i\leq 3$, $|\Omega_i|\leq 2^{i(n-1)}$.}
		\begin{proof}[Proof of Claim A]\renewcommand{\qedsymbol}{$\diamond$}
				Note that if $n$ can be divided by $2^i$, then \MAYZR{there are \(2^{i\cdot n}\) ways to assign the \(n\) vertices to \(2^i\) labeled parts (empty parts allowed)}, and at least $(2^i-1)2^{i(n-1)}$ of them are not {\em equitable partitions} (i.e., partitions where each set has size exactly $n/2^i$) since one can fix an element and \MAYZR{assign} the rest $n-1$ vertices into $2^i$ labeled parts (\MAYZR{again empty parts allowed and therefore} there are $2^{i(n-1)}$ ways of doing it) and there are at least $2^i-1$ ways of placing the fixed vertex such that the resulting partition is not equitable \YZn{and one can observe that all partitions obtained in this way are different from each other}. Thus, there are at most $2^{i\cdot n}-(2^i-1)2^{i(n-1)}=2^{i(n-1)}$ ways of partitioning an $n$-set into $2^i$ sets with size $n/2^i$. Therefore, as every element in $\Omega_i$ is uniquely defined by a labeled equitable partition (labeled by the set $B(i)$) into $2^i$ sets, for every $1\leq i\leq 3$, we have that 	$|\Omega_i|\leq 2^{i(n-1)}$.
		\end{proof}
		
	   Now, let $\mathbb{D}$ be the \YZn{probability space of orgraphs} obtained from $G$ by assigning one of the two directions for every edge of $G$ independently and uniformly (with probability 1/2). For any $1\leq i\leq 3$ and ordering $\sigma\in S(G)$, let $\alpha_i(G_\sigma^i)=\sqrt{\ln(2)e(G_\sigma^i)ni/2}$ and let $X_{G_\sigma^i}$ be the random variable such that $X_{G_\sigma^i}(D)=\sum_{\epsilon\in B(i-1)}a_D(A^{\epsilon1}_\sigma, A^{\epsilon0}_\sigma)$, for all outcomes $D$ of $\mathbb{D}$. Then, by Lemma \ref{lem: Hoeffding's ineq}, for every $1\leq i\leq 3$ and $H\in \Omega_i$ we have that 
		\[Pr[X_H-e(H)/2\leq -\alpha_i(H)]\leq e^{-2\alpha_i(H)^2/e(H)}=2^{-i\cdot n}.\]
		
		Thus, by Claim A,
		\[\sum_{i=1}^3\sum_{H\in\Omega_i} Pr[X_H-e(H)/2\leq -\alpha_i(H)]\leq \sum_{i=1}^3 2^{i(n-1)}\cdot 2^{-i\cdot n}=\frac{7}{8}<1,\]
		which implies that there is an orientation $D$ of $G$ such that
		
		\begin{equation}\label{eq:X_H}
			X_H(D)\geq \frac{e(H)}{2}-\alpha_i(H).
		\end{equation}
		for all $i\in \{1,2,3\}$ and $H\in \Omega_i$. Fix this orientation $D$. By the definition of $\Omega_i$, for any fixed ordering $\sigma$ and $i\in \{1,2,3\}$ there is an element $H_i\in \Omega_i$ such that $H_i$ and $G_\sigma^i$ are \YZn{equivalent w.r.t. their orderings} and in particular $X_{H_i}(D)=X_{G_\sigma^i}(D)$. Thus, \GG{the number of backward arcs in $\sigma$}
		 
		\begin{eqnarray*}
			\bas(D,\sigma)&\geq& \sum_{i=1}^3X_{G_\sigma^i}(D)=\sum_{i=1}^3X_{H_i}(D)\\
			&\geq& \sum_{i=1}^3\left(\frac{e(H_i)}{2}-\alpha_i(H_i)\right)=\sum_{i=1}^3\left(\frac{e(H_i)}{2}-\sqrt{\frac{\ln(2)e(H_i)ni}{2}}\right)\\
		&\geq & \frac{(7d-17\lambda)n}{32}-\sqrt{\frac{\ln(2)}{2}}\YZn{\left(\sqrt{\frac{d-\lambda}{4}}+\sqrt{\frac{d-3\lambda}{4}}+\sqrt{\frac{3d-21\lambda}{16}}\right)n}\\
			&>& \frac{(7d-34\sqrt{d})n}{32}-\MAYZR{\sqrt{\frac{\ln(2)}{2}}\left(\frac{4+\sqrt{3}}{4}\sqrt{d}\right)}n\\
			%&>& \frac{(7d-34\sqrt{d})n}{32}-\MAYZR{\sqrt{\frac{\ln(2)}{2}}\left(\frac{\sqrt{\YZn{9d}}}{4}\right)}n\\
			&=& \frac{7dn}{32}-\MAYZR{\left(\frac{17}{16}+\frac{4+\sqrt{3}}{4}\sqrt{\frac{\ln(2)}{2}}\right)}\sqrt{d}n\\
			&> & \left(\frac{7}{16}-\frac{17}{8\sqrt{d}}-\MAYZR{\frac{17}{10\sqrt{d}}}\right)a(D)\\
			&>& \frac{a(D)}{3}, 
		\end{eqnarray*}
		where the second inequality follows from (\ref{eq:X_H}); the third inequality follows from (\ref{eq:m1}) \YZn{and the fact that the derivative of the function $f(x)=\frac{x}{2}-\sqrt{\frac{\ln(2)xni}{2}}$ is positive when $x>  \frac{\ln(2)in}{2}$ and 
		\[ \frac{\ln(2)in}{2}\leq \frac{\ln(2)3n}{2}<\frac{(d-(8-1)\cdot 2\sqrt{d})n}{16}\leq  \frac{(d-(2^i-1)\lambda)n}{2^{i+1}},\]
		as $1\leq i\leq 3$; }\MAYZR{the fourth inequality follows from the fact that $0\leq \lambda< 2\sqrt{d}$}, and the rest of the inequalities holds as $d=\MAYZR{1362}$ and $\MAYZR{\frac{4+\sqrt{3}}{2}\sqrt{\frac{\ln(2)}{2}} < 1.7}$. As the above inequalites are true for all ordering $\sigma$, $\fas(D)>\frac{a(D)}{3}$. In addition, by Theorem \ref{thm:1mod4} and the fact that $n\geq p^g$, $D$ has girth at least $\log_{p}(n)\geq g$. This completes the proof.
		\end{proof}
		\YZn{
		\begin{remarks}
			By replacing $d=\MAYZR{1362}$ with $d=\MAYZR{422}$ in the above proof, one can show that for every integer $g\geq 3$, $\fas(\MAYZR{422},g)>\frac{1}{4}$ and therefore $\fasd(\MAYZR{422}, g)\leq 3$.  
		\end{remarks}
	}
Given that we clearly have $\fasd(\Delta,g) \leq g$, it is natural to ask
for which values of $\Delta$ and $g$ this bound can actually be achieved.
In this paper, we have shown that $\fasd(3,g) = g$ for \YZ{$g=3,4,5$ and
$\fasd(4,3) = 3$}. Furthermore, Theorem \ref{thm:d=3} implies
that $\fasd(3,g) < g$ for $g \geq \MAYZR{95}$ and Corollary \ref{cor:d=4} implies
that $\fasd(4,g) < g$ for $g \geq 16$. \MAYZR{A natural problem is to
determine the smallest value of $g$ for which $\fasd(3,g)<g$ (and similarly
when 3 is replaced by a larger integer). We now show that 
$\fasd(4,g) < g$ for $g=6$, $\fasd(5,g) < g$ for $g=4$, and $\fasd(3,g) <
g$ for $g=9$. Afterwards, in Theorem~\ref{lemB}, we give a
general upper bound for $\fasd(3,g)$ which, in combination with $\fasd(3,9)
< 9$, allows us to conclude that $\fasd(3,g) < g$ for all $g \geq 8$}.

\begin{thm}\label{thm:small cases}
    We have $\fasd(5,4) < 4$, $\fasd(4,6) < 6$, \MAYZR{and $\fasd(3,9) < 9$.}
\end{thm}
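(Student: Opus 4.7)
My plan is to establish each of the three bounds by exhibiting an explicit orgraph $D \in \mathcal{D}_{\Delta,g}$ with no good $g$-arc-coloring; equivalently, $A(D)$ cannot be partitioned into $g$ feedback arc sets. The main leverage in each case is that a good $g$-arc-coloring forces every directed cycle of length exactly $g$ to be colored by a permutation of $[g]$, so that two such short cycles sharing arcs impose rigid algebraic constraints on one another.

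For $\fasd(5,4) < 4$, I would build a small orgraph $D_1$ of maximum degree at most $5$ and directed girth $4$ by fusing several directed $4$-cycles along common arcs, so that a designated arc lies on three or more $4$-cycles whose overlap pattern forces inconsistent color assignments. After fixing a coloring of one designated $4$-cycle (at most $4!$ choices, reduced to essentially one by relabeling symmetry), the colors on the remaining arcs of $D_1$ are successively forced via the neighboring $4$-cycles, and a short case analysis should close the argument. For $\fasd(4,6) < 6$ the plan is the same, but with $6$-cycles playing the role of $4$-cycles: I would orient a small $4$-regular graph of girth $6$ (for instance, the incidence graph of a small generalized quadrangle or a suitable bipartite $4$-regular construction) so that every arc belongs to several overlapping directed $6$-cycles sharing long sub-paths; the constraint that the cyclic ordering of colors on each $6$-cycle is compatible with the orderings on all overlapping $6$-cycles should then produce a contradiction.

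The most delicate case is $\fasd(3,9) < 9$. Since any cubic orgraph of directed girth at least $9$ has at least $58$ vertices (the order of the $(3,9)$-cage), a direct by-hand case analysis is not feasible. My plan is instead to take a known cubic graph of girth $9$ with a rich automorphism group (for example, an appropriate Cayley graph or the Biggs--Smith graph), orient it so that the directed girth remains $9$, and use the symmetry to cut down the verification to a small set of representative $9$-cycles. In a good $9$-arc-coloring each $9$-cycle is colored by a cyclic permutation of $[9]$, and any two $9$-cycles sharing an arc must agree on the color of that arc; propagating this compatibility condition across the graph should yield the desired contradiction.

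The main obstacle is controlling this propagation argument for a graph as large as the $(3,9)$-cage. If a purely combinatorial argument turns out to be too unwieldy, a fallback plan is to refine the Ramanujan-graph construction underlying Theorem~\ref{thm:d=3} specifically for $g=9$, since any cubic orgraph $D$ with $g(D) \ge 9$ and $\fas(D) > a(D)/9$ automatically satisfies $\fasd(D) < 9$ via the inequality $\fasd(D) \le a(D)/\fas(D)$.
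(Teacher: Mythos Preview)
Your proposal is a plan rather than a proof: no explicit digraph is exhibited for any of the three cases, and the phrases ``a short case analysis should close the argument'' and ``should yield the desired contradiction'' are placeholders for the actual work. More importantly, you have missed the key idea that makes all three cases short and uniform. The paper does not propagate color constraints around overlapping cycles; instead it finds, in each case, a set $S$ of arcs with $|S|>g$ such that \emph{any two} arcs of $S$ lie together on some directed $g$-cycle. In a good $g$-arc-coloring every $g$-cycle is rainbow, so all arcs of $S$ must receive pairwise distinct colors --- immediately impossible since $|S|>g$. Concretely: for $(\Delta,g)=(5,4)$ take $K_{5,5}$ with one perfect matching oriented $X\to Y$ and all other edges $Y\to X$ (the five matching arcs form $S$); for $(4,6)$ split each vertex of a $3$-regular tournament on $7$ vertices into an arc (the seven split-arcs form $S$); for $(3,9)$ split each vertex of the $2$-regular tournament on $5$ vertices into a path of two arcs (the ten split-arcs form $S$).

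Your treatment of the $(3,9)$ case contains a specific misconception that sends you badly astray. The class $\mathcal{D}_{3,9}$ requires only $\Delta(D)\le 3$, not $3$-regularity, so the $(3,9)$-cage bound of $58$ vertices is irrelevant. The paper's example $H_3$ has just $15$ vertices (five of them of degree $2$), and the pigeonhole argument above disposes of it in two lines. Neither a Cayley-graph symmetry reduction nor a Ramanujan-graph refinement is needed; both of your suggested routes are enormously heavier than what the problem requires.
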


\begin{proof}
    To show $\fasd(\Delta,g) < g$, it suffices to exhibit \YZ{an orgraph} $D$
    with $\Delta(D) \leq \Delta$ and $g(D) \geq g$ such that $\fasd(D) <
    g$. We think of a decomposition of $D$ into $g$ feedback arc sets as a
    coloring of the arcs of $D$ with $g$ colors such that all cycles
    contain all $g$ colors.

    We start by showing $\fasd(5,4) < 4$. Let $H_5$ be obtained from
    $K_{5,5} = (X \cup Y,E)$ by orienting a matching $M$ from $X$ to $Y$
    and all other arcs from $Y$ to $X$. Then $\Delta(H_5) = 5$ and $g(H_5)
    = 4$, see Figure~\ref{fig:D5}. Now, for any pair of
    distinct arcs $a,b \in M$, there is a $4$-cycle in $H_5$ containing
    both $a$ and $b$. If $a$ and $b$ lie on a common $4$-cycle $C$, then
    they cannot have the same color since then $C$ would contain at most
    $3$ distinct colors. Thus, all $5$ arcs in $M$ must have pairwise
    distinct colors, which is not possible using $4$ colors.

    We now show that $\fasd(4,6) < 6$. Let $H_4$ be the digraph obtained by
    taking a 3-regular tournament $T$ on $7$ vertices $x_1,x_2,\dots,x_7$ and
    \textit{splitting} every vertex $x_i$ to two vertices $y_i$ and $z_i$.
    By splitting we mean replacing $x_i$ by $y_i$ and $z_i$ and the arc
    $y_iz_i$ such that all in-neighbors (out-neighbors) of $x_i$ become
    (out-neighbors) in-neighbors of $y_i$ ($z_i$), see Figure \ref{fig:D4}.
    We have $\Delta(\YZ{H_4}) = 4$ and $g(\YZ{H_4}) = 2g(T) = 6$. Using the pigeonhole
    principle, one can easily show that for any pair $x_i,x_j$ of vertices
    in $T$, there exists a $3$-cycle containing $x_i$ and $x_j$. It follows
    that there is a $6$-cycle containing $y_iz_i$ and $y_jz_j$ for any pair
    of indices $i,j \in [7]$. Thus, the $7$ arcs
    $y_1z_1,y_2z_2,\dots,y_7z_7$ must have pairwise distinct colors, which
    is not possible using $6$ colors.

    \GGR{ Lastly, we show that $\fasd(3,9)<9$. Let $H_3$ be the digraph
        obtained by taking a 2-regular tournament $T$ on 5 vertices $x_1, x_2,
        \dots, x_5$ and splitting every vertex $x_i$ to three vertices $a_i$,
        $b_i$, and $c_i$. Here by splitting we mean replacing $x_i$ by $a_i$,
        $b_i$, and $c_i$ and the arcs $a_{i}b_{i}$, $b_{i}c_{i}$ such that all
        in-neighbors (out-neighbors) of $x_i$ become in-neighbors (out-neighbors)
        of $a_i$ ($c_i$). We have $\Delta(H_3)=3$ and $g(H_3)=3g(T)=9$.
        \MAYZR{See Figure~\ref{fig:D3}}. Using the
        pigeonhole principle, one can easily show that for any pair $x_i$, $x_j$ of
        vertices in $T$, there exists a 3-cycle containing $x_i$ and $x_j$. It
        follows that every pair of the 10 arcs $a_{i}b_{i}$, $b_{i}c_{i}$, $i\in
        [5]$ in $H_3$ lie on a common 9-cycle and thus must have pairwise distinct
    colors, which is not possible with 9 colors.}
\end{proof}

\begin{figure}[h]
    \tikzset{special/.style = {->, densely dotted, line width=0.03cm}}
    \centering
    \begin{subfigure}{0.23\textwidth}
            \centering
        \begin{tikzpicture}[scale=3.9]
            \node[vertexB] (x1) at (0,1) {$x_1$};
            \node[vertexB] (x2) at (0,0.75) {$x_2$};
            \node[vertexB] (x3) at (0,0.5) {$x_3$};
            \node[vertexB] (x4) at (0,0.25) {$x_4$};
            \node[vertexB] (x5) at (0,0) {$x_5$};

            \node[vertexB] (y1) at (0.5,1) {$y_1$};
            \node[vertexB] (y2) at (0.5,0.75) {$y_2$};
            \node[vertexB] (y3) at (0.5,0.5) {$y_3$};
            \node[vertexB] (y4) at (0.5,0.25) {$y_4$};
            \node[vertexB] (y5) at (0.5,0) {$y_5$};

            % all backwards arcs in slighly opaque gray
            \draw [->, line width=0.03cm, color=gray, draw opacity=0.6] (y1) to (x2);
            \draw [->, line width=0.03cm, color=gray, draw opacity=0.6] (y1) to (x3);
            \draw [->, line width=0.03cm, color=gray, draw opacity=0.6] (y1) to (x4);
            \draw [->, line width=0.03cm, color=gray, draw opacity=0.6] (y1) to (x5);

            \draw [->, line width=0.03cm, color=gray, draw opacity=0.6] (y2) to (x1);
            \draw [->, line width=0.03cm, color=gray, draw opacity=0.6] (y2) to (x3);
            \draw [->, line width=0.03cm, color=gray, draw opacity=0.6] (y2) to (x4);
            \draw [->, line width=0.03cm, color=gray, draw opacity=0.6] (y2) to (x5);

            \draw [->, line width=0.03cm, color=gray, draw opacity=0.6] (y3) to (x1);
            \draw [->, line width=0.03cm, color=gray, draw opacity=0.6] (y3) to (x2);
            \draw [->, line width=0.03cm, color=gray, draw opacity=0.6] (y3) to (x4);
            \draw [->, line width=0.03cm, color=gray, draw opacity=0.6] (y3) to (x5);

            \draw [->, line width=0.03cm, color=gray, draw opacity=0.6] (y4) to (x1);
            \draw [->, line width=0.03cm, color=gray, draw opacity=0.6] (y4) to (x2);
            \draw [->, line width=0.03cm, color=gray, draw opacity=0.6] (y4) to (x3);
            \draw [->, line width=0.03cm, color=gray, draw opacity=0.6] (y4) to (x5);

            \draw [->, line width=0.03cm, color=gray, draw opacity=0.6] (y5) to (x1);
            \draw [->, line width=0.03cm, color=gray, draw opacity=0.6] (y5) to (x2);
            \draw [->, line width=0.03cm, color=gray, draw opacity=0.6] (y5) to (x3);
            \draw [->, line width=0.03cm, color=gray, draw opacity=0.6] (y5) to (x4);

            \draw [->, line width=0.03cm] (x1) to (y1); 
            \draw [->, line width=0.03cm] (x2) to (y2); 
            \draw [->, line width=0.03cm] (x3) to (y3); 
            \draw [->, line width=0.03cm] (x4) to (y4); 
            \draw [->, line width=0.03cm] (x5) to (y5);

        \end{tikzpicture}
        \subcaption{The digraph $H_5$.}
        \label{fig:D5}
    \end{subfigure}
\begin{subfigure}{0.29\textwidth}
    \centering
    \begin{tikzpicture}[scale=2]
        \tikzset{original/.style = {->, line width=0.03cm}}
        \node[vertexB] (a1) at (1.0, 0.0) {$y_1$};
        \node[vertexB] (b1) at (0.901, 0.4339) {$z_1$};
        \node[vertexB] (a2) at (0.6235, 0.7818) {$y_2$};
        \node[vertexB] (b2) at (0.2226, 0.9749) {$z_2$};
        \node[vertexB] (a3) at (-0.2225, 0.9749) {$y_3$};
        \node[vertexB] (b3) at (-0.6234, 0.7819) {$z_3$};
        \node[vertexB] (a4) at (-0.9009, 0.434) {$y_4$};
        \node[vertexB] (b4) at (-1.0, 0.0001) {$z_4$};
        \node[vertexB] (a5) at (-0.901, -0.4338) {$y_5$};
        \node[vertexB] (b5) at (-0.6236, -0.7818) {$z_5$};
        \node[vertexB] (a6) at (-0.2226, -0.9749) {$y_6$};
        \node[vertexB] (b6) at (0.2224, -0.975) {$z_6$};
        \node[vertexB] (a7) at (0.6234, -0.7819) {$y_7$};
        \node[vertexB] (b7) at (0.9009, -0.434) {$z_7$};

        \draw[special] (a1) -- (b1);
        \draw[special] (a2) -- (b2);
        \draw[special] (a3) -- (b3);
        \draw[special] (a4) -- (b4);
        \draw[special] (a5) -- (b5);
        \draw[special] (a6) -- (b6);
        \draw[special] (a7) -- (b7);

        % From all bs to the next 3 as
        \draw[original] (b1) -- (a2);
        \draw[original] (b1) -- (a3);
        \draw[original] (b1) -- (a4);

        \draw[original] (b2) -- (a3);
        \draw[original] (b2) -- (a4);
        \draw[original] (b2) -- (a5);

        \draw[original] (b3) -- (a4);
        \draw[original] (b3) -- (a5);
        \draw[original] (b3) -- (a6);

        \draw[original] (b4) -- (a5);
        \draw[original] (b4) -- (a6);
        \draw[original] (b4) -- (a7);

        \draw[original] (b5) -- (a6);
        \draw[original] (b5) -- (a7);
        \draw[original] (b5) -- (a1);

        \draw[original] (b6) -- (a7);
        \draw[original] (b6) -- (a1);
        \draw[original] (b6) -- (a2);

        \draw[original] (b7) -- (a1);
        \draw[original] (b7) -- (a2);
        \draw[original] (b7) -- (a3);
    \end{tikzpicture}    
    \subcaption{The digraph $H_4$.}
    \label{fig:D4}
    \end{subfigure}
\MAYZR{\begin{subfigure}{0.27\textwidth}
    \centering
\begin{tikzpicture}[scale=1.9]
\tikzset{original/.style = {->, line width=0.03cm}}

\node[vertexB] (a1) at (1.0000,0.0000) {$a_1$};
\node[vertexB] (b1) at (0.9135,0.4067) {$b_1$};
\node[vertexB] (c1) at (0.6691,0.7431) {$c_1$};

\node[vertexB] (a2) at (0.3090,0.9511) {$a_2$};
\node[vertexB] (b2) at (-0.1045,0.9945) {$b_2$};
\node[vertexB] (c2) at (-0.5000,0.8660) {$c_2$};

\node[vertexB] (a3) at (-0.8090,0.5878) {$a_3$};
\node[vertexB] (b3) at (-0.9781,0.2079) {$b_3$};
\node[vertexB] (c3) at (-0.9781,-0.2079) {$c_3$};

\node[vertexB] (a4) at (-0.8090,-0.5878) {$a_4$};
\node[vertexB] (b4) at (-0.5000,-0.8660) {$b_4$};
\node[vertexB] (c4) at (-0.1045,-0.9945) {$c_4$};

\node[vertexB] (a5) at (0.3090,-0.9511) {$a_5$};
\node[vertexB] (b5) at (0.6691,-0.7431) {$b_5$};
\node[vertexB] (c5) at (0.9135,-0.4067) {$c_5$};

% split arcs
\draw[special] (a1)--(b1); \draw[special] (b1)--(c1);
\draw[special] (a2)--(b2); \draw[special] (b2)--(c2);
\draw[special] (a3)--(b3); \draw[special] (b3)--(c3);
\draw[special] (a4)--(b4); \draw[special] (b4)--(c4);
\draw[special] (a5)--(b5); \draw[special] (b5)--(c5);

% tournament arcs (cyclic 2-regular tournament)
\draw[original] (c1)--(a2); \draw[original] (c1)--(a3);
\draw[original] (c2)--(a3); \draw[original] (c2)--(a4);
\draw[original] (c3)--(a4); \draw[original] (c3)--(a5);
\draw[original] (c4)--(a5); \draw[original] (c4)--(a1);
\draw[original] (c5)--(a1); \draw[original] (c5)--(a2);

\end{tikzpicture}
\subcaption{The digraph $H_3$.}
\label{fig:D3}
\end{subfigure}}
 
    \caption{The digraphs $H_5$, $H_4$, \MAYZR{and $H_3$ from the proof of Theorem
        \ref{thm:small cases}}. The arcs resulting
    from splitting a vertex are drawn with dotted lines.}
\end{figure}

\begin{thm} \label{lemB}
    For all even $g \geq 4$, we have $\fasd(3,g) \leq g - \left\lfloor \frac{g}{4} -1 \right\rfloor$.
For all odd $g \geq 7$, we have $\fasd(3,g) \leq g - \left\lfloor \frac{g-7}{4} \right\rfloor$.
\MAYZR{As also $\fasd(3,9) < 9$, we have $\fasd(3,g)<g$ for all $g\ge 8.$}
\end{thm}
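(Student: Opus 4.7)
The plan is to prove the two numerical bounds by exhibiting, for each $g$ in the range, an orgraph $D$ with $\Delta(D) = 3$, $g(D) \ge g$ and $\fasd(D)$ at most the claimed value. The blueprint is the vertex-splitting construction used in Theorem \ref{thm:small cases} to show $\fasd(3,9) < 9$: starting from a suitable small $2$-regular oriented digraph $T$ of some girth $g_0$, I would replace every vertex $v$ by a directed path $P_v$ of $\ell$ fresh vertices, routing the two in-arcs of $v$ through the first vertex of $P_v$ and the two out-arcs out of the last vertex of $P_v$; the resulting orgraph $D = D(T,\ell)$ has $\Delta(D) = 3$ and $g(D) = \ell g_0$. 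Its key structural feature is that every $\ell g_0$-cycle of $D$ is the stretched image of a $g_0$-cycle of $T$, and hence carries all $\ell - 1$ internal arcs of each of its $g_0$ underlying $T$-vertices together with $g_0$ cross arcs.

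For each residue of $g$ modulo $4$ I would choose the base graph $T$ and the split parameter $\ell$ so that $\ell g_0 = g$ and the claimed upper bound on $\fasd(3,g)$ matches the number of colours at which an inconsistency becomes provable. The $2$-regular tournament $T_5$ used in Theorem \ref{thm:small cases} is already enough when $g$ is a multiple of $3$, since every pair of its vertices lies on a common $3$-cycle. For the other congruences I would use suitable $2$-regular oriented base graphs of girth $4$ (realised as a circulant or Cayley digraph on more than five vertices) and, for odd residues of $g$, append a single directed path to shift parity without raising the maximum degree.

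The central analytic step, which is the main obstacle, is to rule out every good $m$-arc-coloring of $D$ for $m$ equal to the claimed bound plus one. When $m = \ell g_0$, every $g$-cycle must use each colour exactly once, which forces the internal arc blocks of any two $T$-vertices that share a $g_0$-cycle to be colour-disjoint; this is the clean argument from Theorem \ref{thm:small cases} and only yields $\fasd(D) \le g-1$. For the sharper bounds in Theorem \ref{lemB} one has $m < g$, so colour repetitions on a $g$-cycle are permitted and the argument must track them. I plan a double-counting step: on any single $g$-cycle of $D$ the number of arcs carrying a repeated colour is at most $g - m$, while the cycle-covering property of $T$ ensures that across the family of $g_0$-cycles of $T$ every pair of internal arc blocks must co-occur, forcing the blocks to share far more colours in aggregate than the per-cycle budget permits. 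Choosing $|V(T)|$ and $\ell$ so that this global demand exceeds the local budget will give the bound.

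Finally, the conclusion $\fasd(3,g) < g$ for all $g \ge 8$ is immediate from the two bounds combined with Theorem \ref{thm:small cases}: for $g = 8$ and every $g \ge 10$ the floor expression $\lfloor g/4 - 1 \rfloor$ (even case) or $\lfloor (g-7)/4 \rfloor$ (odd case) is at least $1$, so the stated bound is already strictly below $g$, while for $g = 9$ the theorem's bound only gives $\le 9$ and one has to invoke Theorem \ref{thm:small cases} to conclude $\fasd(3,9) \le 8 < 9$.
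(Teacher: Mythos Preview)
Your proposal does not yet contain a proof: you correctly identify the ``central analytic step'' (ruling out good $m$-colorings with $m<g$, where colour repetitions on a cycle are allowed) as the main obstacle, and then only sketch a vague double-counting plan without carrying it out. The vertex-splitting construction from Theorem~\ref{thm:small cases} gives, cleanly, only $\fasd(D)\le g-1$; everything beyond that in your outline is speculation about base graphs of girth~$4$, parity adjustments, and an unexecuted global-vs-local counting argument. As written, this is not a proof of either numerical bound.

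The paper's construction is both different and dramatically simpler, and avoids the obstacle entirely. For even $g=2k$, take three disjoint directed paths $P_1,P_2,P_3$ each of length $k-1$ and add the six arcs from the terminal vertex of each $P_i$ to the initial vertex of each $P_j$ with $j\ne i$. This orgraph $D_g$ has $\Delta=3$ and $g(D_g)=g$, and it contains exactly three $g$-cycles $C_{12},C_{13},C_{23}$, with each path arc lying on \emph{two} of them and each of the six cross arcs lying on \emph{one}. Now in any good $c$-arc-coloring, each colour must appear on each of the three cycles, hence must occupy at least two arcs in total (a path arc counts double). This gives $2c\le 3(k-1)+6$, i.e.\ $c\le \lfloor 3g/4+3/2\rfloor = g-\lfloor g/4-1\rfloor$. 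The odd case is not handled by a separate construction at all; it follows immediately from monotonicity $\fasd(3,g)\le \fasd(3,g+1)$ applied to the even bound for $g+1$. So there is no need for base graphs of girth~$4$, for circulants, for parity-shifting paths, or for any analysis of repeated colours on a cycle: three paths and a two-line incidence count suffice.
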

\begin{proof}
  We first consider the even $g$ case, so let
$g \geq 4$ be even where $g=2k$. 
We will define the digraph $D_g$ as follows.
Let $P_1$, $P_2$ and $P_3$ be three vertex disjoint paths of length $k-1$,
where $P_j=p_1^j p_2^j \ldots p_k^j$ for all $j \in [3]$.  Now add all arcs from $p_k^i$ to $p_1^j$ for all $i,j \in [3]$, where $i \not= j$.
We have now constructed $D_g$ (i.e. $D_{2k}$). See Figure~\ref{D-digraph} for an illustration of $D_8$.
Note that the arc set $A(P_i) \cup A(P_j)$ belongs to a $g$-cycle in $D_g$ for all $1 \leq i < j \leq 3$.

\GGR{Assume that there is a good $c$-arc-coloring of $D_g$ (recall that each color corresponds to an FAS).}
%we can arc-color the arcs of $D_g$ with $c$ colors such that  every cycle
%of $D_g$ contains arcs of all $c$ colors (i.e. each color corresponds to a feedback arc set).
Consider the three $g$-cycles $C_{12} = P_1 P_2 p_1^1$, $C_{13} = P_1 P_3 p_1^1$ and $C_{23} = P_2 P_3 p_1^2$.
Note that all arcs in $A'=A(P_1) \cup A(P_2) \cup A(P_3)$ belong to exactly two of the cycles $C_{12}$, $C_{13}$ and $C_{23}$
and all arcs in $A'' = \{p_k^1 p_1^2, p_k^2 p_1^1, p_k^1 p_1^3, p_k^3 p_1^1, p_k^2 p_1^3, p_k^3 p_1^2 \}$ belong to exactly one of the cycles. 
So, every color must appear on at least two of the arcs in $A' \cup A''$, which implies the following:
\[
c \leq \frac{|A' \cup A''|}{2} = \frac{3(k-1)+6}{2} = \frac{3g}{4} + \frac{3}{2} 
\]
Noting that $c$ is an integer and by 
considering the case when $g$ is
divisible by four and the case when it is not, we obtain the following, completing the proof of the even case.
\[
c \leq \left\lfloor \frac{3g}{4} + \frac{3}{2} \right\rfloor =  g - \left\lfloor \frac{g}{4} -1 \right\rfloor 
\]
Now consider the case when $g$ is odd and note that the following holds.
\[
\fasd(3,g) \leq \fasd(3,g+1) \leq g+1 - \left\lfloor \frac{g+1}{4} -1 \right\rfloor = g - \left\lfloor \frac{g-7}{4} \right\rfloor
\]
\MAYZR{This completes the proof.}
\end{proof}

\begin{figure}[htb]
\begin{center}
\tikzstyle{vertexX}=[circle, draw, top color=white, bottom color=white, minimum size=8pt, scale=0.65, inner sep=0.8pt]
\tikzstyle{vertexU}=[circle, draw, top color=white, bottom color=white, minimum size=6pt, scale=0.4, inner sep=0.8pt]

\begin{tikzpicture}[scale=0.3]
%  \node at (7.5,-4) {$D_8$};
  \node (p11) at (0,9) [vertexX]{$p_1^1$}; 
  \node (p12) at (5,9) [vertexX]{$p_2^1$};
  \node (p13) at (10,9) [vertexX]{$p_3^1$};
  \node (p14) at (15,9) [vertexX]{$p_4^1$};

  \node (p21) at (0,5) [vertexX]{$p_1^2$};
  \node (p22) at (5,5) [vertexX]{$p_2^2$};
  \node (p23) at (10,5) [vertexX]{$p_3^2$};
  \node (p24) at (15,5) [vertexX]{$p_4^2$};

  \node (p31) at (0,1) [vertexX]{$p_1^3$};
  \node (p32) at (5,1) [vertexX]{$p_2^3$};
  \node (p33) at (10,1) [vertexX]{$p_3^3$};
  \node (p34) at (15,1) [vertexX]{$p_4^3$};

  \draw[->, line width=0.03cm] (p11) -- (p12);
  \draw[->, line width=0.03cm] (p12) -- (p13);
  \draw[->, line width=0.03cm] (p13) -- (p14);

  \draw[->, line width=0.03cm] (p21) -- (p22);
  \draw[->, line width=0.03cm] (p22) -- (p23);
  \draw[->, line width=0.03cm] (p23) -- (p24);

  \draw[->, line width=0.03cm] (p31) -- (p32);
  \draw[->, line width=0.03cm] (p32) -- (p33);
  \draw[->, line width=0.03cm] (p33) -- (p34);

  \draw[->, line width=0.03cm] (p14) -- (p21);
  \draw[->, line width=0.03cm] (p14) -- (p31);
  \draw[->, line width=0.03cm] (p24) -- (p11);
  \draw[->, line width=0.03cm] (p24) -- (p31);
  \draw[->, line width=0.03cm] (p34) -- (p11);
  \draw[->, line width=0.03cm] (p34) -- (p21);

%  \draw[->, line width=0.03cm] (z) to [out=140, in=40] (x);
\end{tikzpicture} 
\caption{The digraph \MAYZR{$D_g$ for $g=8$ from the proof of Theorem \ref{lemB}.}}
\label{D-digraph}
\end{center}
\end{figure}

\YZn{
\section{Conclusion}\label{sec:conclusion}
In this paper, we introduced a new parameter $\fasd(D)$ for orgraphs $D$,
through which we prove several results for the feedback arc sets of
weighted orgraphs with bounded maximum degree and girth. In particular, we
have shown that $\fasd(4,3)=3$, $\fasd(3,g)=g$ for all $g\in\{3,4,5\}$ and
that $\fasd(\Delta, g)$ is finite and bounded from above by \FIX{$94$} for
all $\Delta\geq 3$ and $g\geq 3$. We also obtained some better upper bound
for $\fasd(\Delta,g)$ when $\Delta$ is large, and especially we showed that
$\fasd(\Delta,g)=2$ for all $\Delta\geq \MAYZR{1362}$ and $g\geq 3$. It
would be interesting to determine more values of $\fasd(\cdot,\cdot)$. In
this section, we conclude our paper by listing some of the problems we are
interested in but unable to solve. 

In this paper we determined $\fasd(\Delta,3)$ for all $\Delta\geq 3$ except
$\Delta = 5$. We would like to conjecture the following, proving which
would generalize $\fasd(4,3)=3$ and $\fas(5,3)=\frac{1}{3}$ proved in
\cite{GLYZ}.

\begin{conj}\label{conj:1}
	$\fasd(5,3)=3$.
\end{conj}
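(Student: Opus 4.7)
The plan is to prove, by induction on $|V(D)|$, that every connected $D \in \mathcal{D}_{5,3}$ admits a \emph{good triple} $(\sigma_1, \sigma_2, \sigma_3)$ of orderings of $V(D)$, following the framework of the proof of Theorem~\ref{thm:main}. Recall that a good triple induces a partition of $A(D)$ into three feedback arc sets. The main conceptual novelty compared with the $\Delta = 4$ setting is that every vertex now has in/out-degree split in $\{(1,d),(d,1),(2,2),(2,3),(3,2)\}$ for some $d \leq 4$; the balanced high-degree splits $(2,3)$ and $(3,2)$ are entirely new and will drive the bulk of the case work.

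First I would reduce to the balanced case: an analogue of Lemma~\ref{lem:nonregular} should show that if $D$ contains a vertex $v$ with $\min\{d^+(v), d^-(v)\} \leq 1$, then a good triple of $D - v$ extends to one of $D$ by inserting $v$ via Observations~\ref{obs:one} and~\ref{obs}. Thus we may assume every vertex of $D$ has in- and out-degree at least~$2$, and hence degree in $\{4, 5\}$. Next, analogues of Lemma~\ref{lem:transitive} and Lemma~\ref{lem:antidirected} would handle transitive triangles and extensions along anti-directed paths. The anti-directed path lemma must be strengthened: for a vertex $x$ with $d^-(x) = 3$ or $d^+(x) = 3$, one needs a good triple of $D - x$ containing an ordering in which \emph{all} of $N^-(x)$ precedes \emph{all} of $N^+(x)$. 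Guaranteeing this order-constraint on up to six neighbors simultaneously is the key technical strengthening required.

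The main case analysis (generalizing Cases 1--3 and Subcases 3a--3c of Theorem~\ref{thm:main}) would then pick a vertex $x$, set $D' = D - x$, construct an anti-directed path $P$ starting at an out-neighbor of some $a \in N^-(x)$, and argue on how $P$ terminates. The main obstacle is a combinatorial explosion of subcases: one must distinguish the split of $x$, the splits of each vertex in $N^-(x) \cup N^+(x)$, and how $P$ interacts with these up to six neighbors. A secondary obstacle is the loss of the powerful $2$-regular structure exploited in the $\Delta = 4$ proof: a girth-$3$ orgraph in which every vertex has degree $4$ or $5$ need not contain short anti-directed paths with the termination properties needed, nor conveniently placed transitive triangles enabling a two-vertex deletion as in Lemma~\ref{lem:transitive}.

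As a possible fallback if the direct extension proves intractable, one could try to combine the good-triple construction with the bound $\fas(D) \leq a(D)/3$ from \cite{GLYZ}, peeling off a single feedback arc set $F$ with $|F| \leq a(D)/3$ and then decomposing $D - F$ into two feedback arc sets. However, $D - F$ may fail to inherit useful structural properties (new short cycles can appear after contraction of the acyclic $D-F$ into its topological order, and the residual degree sequence becomes heterogeneous), so making this iterative scheme rigorous seems at least as delicate as the direct structural approach.
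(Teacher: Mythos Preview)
The statement you are attempting to prove is Conjecture~\ref{conj:1} in the paper, and the paper does \emph{not} prove it: it is listed as an open problem in Section~\ref{sec:conclusion}, with the authors explicitly saying that proving it ``would generalize $\fasd(4,3)=3$ and $\fas(5,3)=\frac{1}{3}$.'' There is therefore no proof in the paper to compare your attempt against.

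What you have written is not a proof but a research outline, and you are candid about this: you identify the ``combinatorial explosion of subcases'' and the ``loss of the powerful $2$-regular structure'' as the main obstacles, and you offer a fallback that you yourself assess as ``at least as delicate.'' These are real gaps, not technicalities. In particular, the step where you need a good triple of $D-x$ in which all of $N^-(x)$ precedes all of $N^+(x)$, with $|N^-(x)|+|N^+(x)|$ possibly equal to~$5$, is genuinely new: the anti-directed path machinery of Lemma~\ref{lem:antidirected} controls the position of a \emph{single} vertex relative to one ordering, and bootstrapping it to control three in-neighbors and two out-neighbors simultaneously (or vice versa) is not a routine extension. Moreover, once $\min\{d^+(v),d^-(v)\}\ge 2$ for all~$v$, you no longer have a canonical ``unbalanced'' vertex to anchor the induction as in Lemma~\ref{lem:nonregular}, and the $(2,2)$/$(2,3)$/$(3,2)$ mixture does not give you the clean path-contraction structure that drives the $\Delta=4$ argument. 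Your proposal correctly locates where the difficulty lies, but it does not resolve it; as written it is a plausible plan of attack on an open problem rather than a proof.
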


Determining the exact values of the other terms of $\fasd(\cdot, \cdot)$
would also be very interesting. As the first step towards determining more
values, we would like to pose the following problem for which we have shown
that $\fasd(3,6)\in \{5,6\}$ and $\fasd(4,4)\in \{3,4\}$.

\begin{problem}\label{prob:336}
	What is $\fasd(3,6)$ and $\fasd(4,4)$? 
\end{problem}

Note that $\fasd(\Delta,g)\leq g$ is a trivial bound for all $\Delta\geq 2$
and $g\geq 3$.} \GGn{It would be interesting to know when the trivial bound
    is not tight.} \YZn{Therefore, we have the following problem as it can
        be seen from \MAYZR{Table}~\ref{table:Results} that $\fasd(\Delta, 3)=2<3$
        for all $\Delta\geq 6$ and to determine such $g$ for $\Delta=5$ is
    equivalent to solving our Conjecture \ref{conj:1}}.

\begin{problem}
    \label{prob:fasdg}
    For $\Delta=3$ or $4$, what is the \MAYZR{least} integer $g\geq 3$ such
    that $\fasd(\Delta,g)\MAYZR{\ <\ }g$?
\end{problem}

It can be seen from Fig. \ref{table:Results} that such $g$ belongs to
\MAYZR{$\{6,7,8\}$} if $\Delta=3$ and belongs to $\{4,5,6\}$ if $\Delta=4$.

\MAYZR{We would also like to pose the following analogue of Problem~\ref{prob:fasdg} for $\fas(\Delta,g).$

\begin{problem}
    \label{prob:fasg}
    For $\Delta \in \{3,4,5\}$, what is the least integer $g\geq 3$ such that
    $\fas(\Delta,g) > 1/g$?
\end{problem}

Note that for $\Delta=6$, Proposition~$11$ of \cite{GLYZ} shows that
$\fas(\Delta,g) < 1/g$ already for $g=3$. We briefly discuss the possible
values of the $g$ in Problem~\ref{prob:fasg} for each $\Delta \in
\{3,4,5\}$. It is straightforward to verify that the digraph $D_8$
illustrated in Figure~\ref{D-digraph} has $\fas(D_8) = 2$ and $a(D_8) =
15$. Thus, $\fas(3,8) \geq 2/15 > 1/8$. As $\fas(3,g) = 1/g$ for all $g \in
\{3,4,5,6\}$ (proven in Section~\ref{sec:d=3}), we have $g \in \{7,8\}$ for
$\Delta=3$. As $\fas(4,3) = \fas(5,3) = 1/3$ (Theorem $2$ of \cite{GLYZ})
and $\fas(\Delta+1,g) \leq \fas(\Delta,g)$, we have $4 \leq g \leq 8$ for
$\Delta=4$ and $5$. 
}

We have shown that $\fasd(\Delta,g)\leq \FIX{94}$ for all $\Delta\geq 3$ and $g\geq 3$. It would be great to know an accurate upper bound for every fixed $\Delta$ and therefore we pose the following problem.

\begin{problem}
	Given a fixed $\Delta\geq 3$, what is $\lim\limits_{g\to \infty}\fasd(\Delta,g)$? \MAYZR{In particular, what is $\lim\limits_{g\to \infty}\fasd(3,g)$?}
\end{problem}

\MAYZR{As we have shown that $\fas(3,g)=\frac{1}{g}$ for $g\in \{3,4,5,6\}$. We would like to pose the following similar question. 

\begin{problem}
	Given a fixed $\Delta\geq 3$, what is $\lim\limits_{g\to \infty}\fas(\Delta,g)$? In particular, what is $\lim\limits_{g\to \infty}\fas(3,g)$?
\end{problem}}

We known that for every $g\geq 3$, $\fasd(\Delta,g)=2$ for all $\Delta\geq \MAYZR{1362}$. It would interesting to determine the smallest $\Delta$ such that $\fasd(\Delta,g)=2$ for arbitrarily large $g$.

\begin{problem}
	What is the smallest $\Delta$ such that $\fasd(\Delta,g)=2$ for arbitrarily large $g$?
\end{problem}

\paragraph{Acknowledgement.} We are thankful to Jiangdong Ai and  Xiangzhou Liu for useful suggestions on the paper.


\begin{thebibliography}{99}
			\MAYZR{	\bibitem{ALGYZ} J. Ai, G. Gutin, X. Liu, A. Yeo and Y. Zhou, Feedback vertex sets of digraphs with bounded maximum degree, arXiv:2512.01676, 2025.}
		
		\bibitem{Alon2002} N. Alon, Voting paradoxes and digraphs realizations, Adv. Applied Math. 29(1) (2002) 126--135.
		
		\bibitem{Alon2006} N. Alon, Ranking tournaments, SIAM J. Discrete Math. 20 (2006) 137--142.
		
		
		\bibitem{BJG} J. Bang-Jensen and G.Z. Gutin, Basic terminology, notation and results. In: J. Bang-Jensen and G.Z. Gutin, editors, Classes of Directed Graphs, Springer, 2018.
		
		\bibitem{Berger1997} B. Berger, The fourth moment method, SIAM J. Comput. 26 (1997) 1188--1207.
		
		\bibitem{BS1990} B. Berger and P.W. Shor, Approximation algorithms for the maximum acyclic subgraph problem. In: Proc. First ACM-SIAM Symposium on Discrete Algorithms,
		(1990) 236--243.
		
		\bibitem{BS1997}  B. Berger and P. Shor, Tight bounds for the maximum acyclic subgraph problem, J. Algorithms 3 (1997) 1--18.
		
		
		\bibitem{CharbitTY2007}	P. Charbit, S. Thomasse, and A. Yeo, The minimum feedback arc set problem is NP-hard for tournaments, Combin. Probab. Comput. 16 (2007) 1--4.

        \FIX{\bibitem{CRS2009} D. Cvetković, P. Rowlinson, and S. Simić,
        Spectrum and structure.  In: An Introduction to the Theory of Graph Spectra. Cambridge: Cambridge University Press, 2009.}
		
		\bibitem{EL1995} P. Eades and X. Lin, A heuristic for the feedback arc set problem. Australas. J. Combin. 12 (1995) 15--25.
		
		\bibitem{ELS1993} P. Eades, X. Lin, and W.F. Smyth, A fast and effective heuristic for the feedback arc set problem, Inform. Process. Lett. 47 (1993) 319--323.
		
		\bibitem{FLRS} F.V. Fomin, D. Lokshtanov, V. Raman and S. Saurabh, Fast local search algorithm for weighted feedback arc set in tournaments, In: Proc. Twenty-Fourth AAAI Conference, 2010, 65--70, Atlanta, Georgia.
		
		\bibitem{Gutier} J. Guti{\'e}rrez, Towards a Dual Version of Woodall’s Conjecture for Partial 3-Trees, arXiv: 2408.05703v2, Sep 2025. 
		
		\bibitem{GLYZ} G. Gutin, H. Lei, A. Yeo and Y. Zhou, Upper bounds on minimum size of feedback arc set of directed multigraphs with bounded degree, submitted, arXiv:2409.07680, 2024.
		
		\bibitem{Hanauer2017} K. Hanauer, Linear Orderings of Sparse Graphs, PhD thesis, Universit{\"a}t Passau, 2017.
		
		\bibitem{HBA2013} K. Hanauer, F.J. Brandenburg and C. Auer (2013). Tight Upper Bounds for Minimum Feedback Arc Sets of Regular Graphs. In: Graph-Theoretic Concepts in Computer Science (WG 2013), Lect. Notes Comput. Sci., 8165 (2013) 298--309, Springer, Berlin.
		
		%	\bibitem{dVega1983} W.F. de la Vega, On the maximal cardinality of a consistent set of arcs in a random tournament,
		%J. Combin. Theory Ser. B 35 (1983) 328--332.
		
		\bibitem{HGH} M. Hecht, K. Gonciarz and S. Horv\'{a}t, Tight Localizations of Feedback Sets, ACM J. Exp. Algorithmics 26 (2021) 1--19.
		
		
		\bibitem{Hoeffding1963} W. Hoeffding (1963), Probability inequalities for sums of bounded random variables, J. Amer. Stat. Association, 58(301) (1963) 13--30. 
	
		
		\bibitem{Jung1970} H.A. Jung, On subgraphs without cycles in tournaments. In: Combinatorial Theory \& Its Applications II (North-Holland, Amsterdam,
		1970) 675--677.
		
	        \bibitem{KS2006}
			M. Krivelevich and B. Sudakov, Pseudo-random graphs, in: More sets, graphs and numbers (E. Gy\H{o}ri, G.O.H. Katona and L. Lov{\'a}sz, eds.), 2006, 199--262.
			


	
		\bibitem{LW2001} O. Lee and Y. Wakabayashi, Note on a min-max conjecture of Woodall. J. Graph Th. 38(1) (2001) 36--41, 2001.
		
		\bibitem{LW2006} O. Lee and A. Williams, Packing dicycle covers in planar graphs with no $K_5 - e$ minor. Lect. Notes  Comput. Sci. 3887 (2006)  677--688.
		
		\bibitem{LS1991} C.E. Leiserson and J.B. Saxe, Retiming synchronous circuitry, Algorithmica 6 (1991) 5--35.
		
		\bibitem{LPS1988} A. Lubotzky, R. Phillps and P. Sarnak, Ramanujan graphs, Combinatorica 8 (1988) 261--277.
		
		\bibitem{Seymour95} P.D. Seymour, Packing directed circuits fractionally, Combinatorica 15 (1995) 281--288.
		
		\bibitem{Spencer1971} J. Spencer, Optimal ranking of tournaments, Networks 1 (1971) 135-138.
		
		\bibitem{Woo1978} D. R. Woodall. Menger and K{\"o}nig systems. Lect. Notes in Math. 642 (1978) 620--635.
	
		
		\bibitem{XZXK} Z. Xiong, Y. Zhou, M. Xiao and B. Khoussainov, Finding small feedback arc sets on large graphs, Comput. \& Oper. Res. 169 (2024) 106724.	
		
	\end{thebibliography}
\end{document}